\newcommand{\supp}{\text{supp}}
\newcommand{\lr}{\leftrightarrow}
\newcommand{\prob}{\mathbb{P}}
\newcommand{\expec}{\mathbb{E}}
\newcommand{\1}{\mathbbm{1}}
\newcommand{\IRD}{\texttt{IRD}}
\newcommand{\IRG}{\texttt{IRG}}
\newcommand{\CCI}{\texttt{RaCInG}}
\newcommand{\IEG}{\texttt{IEG}}
\newcommand{\IAG}{\texttt{IAG}}
\newcommand{\ASRG}{\texttt{ASRG}}
\newcommand{\ESRG}{\texttt{ESRG}}
\newtheorem{theorem}{Theorem}[section]
\newtheorem{corollary}[theorem]{Corollary}
\newtheorem{lemma}[theorem]{Lemma}
\newtheorem{prop}[theorem]{Proposition}
\newtheorem{definition}[theorem]{Definition}
\newtheorem{example}[theorem]{Example}
\newtheorem{assumption}[theorem]{Assumption}
\theoremstyle{definition}
\newtheorem*{remark}{Remark}
\newtheorem*{notation}{Notation}
\title{Flexible random graph modeling for cell-cell interactions}
\author{
  Mike van Santvoort \\
  Department of Mathematics and Computer Science\\
  Eindhoven University of Technology\\
  \texttt{m.v.santvoort@tue.nl} \\
   \And
   Pim van der Hoorn \\
Department of Mathematics and Computer Science\\
  Eindhoven University of Technology\\
  \texttt{w.l.f.v.d.hoorn@tue.nl}   
}
\begin{document}
\maketitle

\begin{abstract}
In this paper we explore generalisations to a random digraph model designed for modelling cell-cell interactions (RaCInG). In its baseline model -- tested both theoretically and in biological practice -- a fixed number of arcs are assigned uniformly at random to vertices that have suitable types to accept them. Generalisations to this model are explored in two directions: creating undirected graphs in favour of directed graphs, and incorporating spatial information. Both directions are driven by biological demands: better interpretability and adaptation to technological advances. We show not only that these generalisations are theoretically possible within the framework of RaCInG, but that they can be made rigorous, proving that RaCInG creates a flexible random graph framework to model cell-cell interactions.
\end{abstract}

\keywords{Spatial random graphs \and Inhomogenious random digraph \and Model equivalence \and Cell-cell interactions}

\section{Introduction}
For many biological applications cell-cell interactions (CCI) play an important role. For example, CCIs play a central role in how immune cells differentiate \cite{Park2020AcellFormation}, how tissues maintain balance \cite{Krausgruber2020StructuralResponses} and how diseases like cancer take over parts of the body for their own benefit \cite{Qi2020SingleCoronaviruses}. Therefore, it is paramount that tools exists to extract information about CCIs and to infer them.

The challenge when modelling CCIs is twofold. First, it proves difficult to measure CCIs directly, using the available data. Much data is available, but none measures CCI directly \cite{Armingol2021DecipheringExpression}. Therefore, CCIs always need be inferred from measurable data. Secondly, CCIs form a complex web of reactions that all indirectly influence each other \cite{LapuenteSantana2020TowardsBlockers}. Thus, even if one is able to measure individual CCIs directly, it is still unclear how these combine to form the behaviour of an entire tissue.

Network models have proven to be a valuable tool in investigating how CCIs combine together to elicit a certain response \cite{Bafna2023CLARIFY, Kim2023CellNeighborEX, Huang2023MIGGRI, Cang2020InferringData, Pham2023RobustTissues}. In these models, it is possible to measure on a gene, cell, gene type or cell type level how different elements of a tissue combine and interact. Usually these models invoke some stochasticity, since measurable data (i.e., some type of gene expression data) is often not directly compatible with the desired network structure of the model output.

The data that is used to infer CCIs comes in three types: bulk, single-cell and spatial. Bulk data blends all gene expression of cells in a sample together, allowing us to measure the total expression of given genes. This data is the least detailed, but simultaneously the most cost effective and widely applied \cite{Stark2019RNAYears}. Single cell data allows us to zoom in on a sample when compared to bulk sequencing. Cells have been isolated from the sample, and gene expressions are extracted from them. This allows one to attain context specific knowledge on how certain (types of) cells behave at the cost of losing the global perspective \cite{Tzec-Interian2024BioinformaticsAnalyses}. Finally, spatial data balances the benefits and limitations of the previous data types. It retains the spatial structure of a sample, and allowing us to know where gene expression comes from. However, since it is the newest type of data, it is still unclear how to robustly use it in a modelling context, and we still cannot measure all genes using spatial techniques \cite{Bressan2023TheDawn}.

A good CCI model ideally is compatible with all types of data, and while some exist that are \cite{Tanevski2022ExplainableData}, they often rely on machine learning approaches that lose interpretability \cite{vanSantvoort2025AnData}. It proves difficult to combine different data types into one comprehensive interpretable package \cite{Badia-iMompel2023GeneOmics}. Therefore, it is important to construct a mathematical CCI model that can easily be adapted from context to context. A recent model we developed that claims to do this, is RaCInG \cite{vanSantvoort2023MathematicallyMicroenvironment}. It promises mathematical tractability, which has been shown through a connection to inhomogeneous random digraphs \cite{Cao2020ConnectivityDigraphs} in \cite{vanSantvoort2023FromCounts}. The applicability of RaCInG has been shown in \cite{vanSantvoort2023MathematicallyMicroenvironment} through practical examples where input data differs slightly or when fundamental network assumptions are changed. However, a mathematical analysis of these adaptations remains to be desired.

In this paper, we aim to further explore the mathematics behind RaCInG, and prove that the model allows for flexibility without losing its rigour. Flexibility will be explored through the lens of two biological problems. First, RaCInG outputs digraphs while often (undirected) graphs are more interpretable. This was solved in \cite{vanSantvoort2023MathematicallyMicroenvironment} by simply turning the arcs of output digraphs into edges. We will show this approach is not only natural, but induces a logical extension of RaCInG into the realm of undirected graphs. Second, the biological landscape is shifting towards spatial data, and there is a need to adapt RaCInG to work with this data. We will show that it is not only possible, but maintains the rigour that RaCInG was originally designed with. 

We will start by giving a general mathematical framework for RaCInG in Section~\ref{sec:RaCInG def}, and repeat the main result from \cite{vanSantvoort2023FromCounts}. Next, in Section~\ref{sec:dir to undir} we investigate how RaCInG naturally generalizes to an undirected model and the connection of this model to inhomogeneous random graphs \cite{Bollobas2007TheGraphs}. Finally, in Section~\ref{sec:SpaCInG} we show how RaCInG can be adapted to a CCI model using spatial data and how this model naturally connects to inhomogeneous random digraphs too. Proofs and heuristics behind our main results are given in Section~\ref{sec:main proofs} and proofs of minor lemmas will be diverted to Section~\ref{sec:lem proofs}.

\section{Modelling context and application challenges}\label{sec:RaCInG def}
RaCInG is a CCI model that predominately uses bulk data as its input. It aims to create CCI networks where vertices have a colour which represent cells and their types, and where arcs represent CCI instances between a ligand (sending protein) and a receptor (receiving protein). The model transforms its input such that four pieces of information are derived from it that are used as the basis of its graph generation algorithm. These four pieces of information are:

\begin{enumerate}[]
    \item An arc colour distribution $C$ supported on a product $\mathcal{L} \times \mathcal{R}$, where $\mathcal{L}$ and $ \mathcal{R}$ are countably infinite sets.
    \item A vertex type distribution $T$ supported on a countably infinite set $\mathcal{S}$.
    \item A vertex type to out-colour compatibility matrix $I : \mathcal{S} \times \mathcal{L} \to \{0 , 1\}$.
    \item A vertex type to in-colour compatibility matrix $J : \mathcal{S} \times \mathcal{R} \to \{0, 1\}$.
\end{enumerate}

\begin{remark}
    Biologically, we can derive the four pieces of information as follows:
    \begin{enumerate}
        \item Then arc colour distribution $C$ can be derived directly from bulk RNA-seq data. It gives the probability that a given interaction is between a fixed ligand type $i$ and fixed receptor type $j$.
    \item A vertex type distribution $T$ can be derived by mapping bulk gene expression data to cell type signatures \cite{Finotello2018QuantifyingData}, which gives the probability that a cell is of type $t$.
    \item A vertex type to out-colour and in-colour compatibility matrices $I$ and $J$ can derived from literature curated databases \cite{Turei2021IntegratedAnalysis,Ramilowski2015ADraftHumans,Lapuente-Santana2021InterpretableInhibitors}, which tells whether ligand/receptor $i$ can be expressed by a cell type $t$.
    \end{enumerate}
\end{remark}

These four pieces of information are used in conjunction with two meta-parameters $n$ and $\mu$, depicting the number of vertices and average in-/out-degree, respectively. The following algorithm outlines how RaCInG constructs a network based on the input data.

\begin{definition}[RaCInG]\label{def:RaCInG algorithm}
Fix a number of vertices $n$ to be generated and an average in-/out-degree $\mu$. We construct a graph $\vec{G} = (V, \vec{E})$ from RaCInG as follows:
\begin{enumerate}[label = \arabic*.]
    \item Create $n$ vertices, and set $V = [n]$.
    \item For each vertex $v \in [n]$, sample an independent vertex type $T_v \sim T$.
    \item Create $\lfloor \mu n \rfloor$ possible arcs.
    \item For each interaction $a \in [\lfloor \mu n \rfloor]$, do the following:
    \begin{enumerate}[label = \alph*.]
        \item Assign $a$ an out- and in-colour $C_a = (C^{\text{out}}_a, C^{\text{in}}_a) \sim C$.
        \item Independently choose one vertex $v$ uniformly from the set \[\left\{ v \in [n] : I(T_v, C^\text{out}_a) = 1 \right\}.\]
        If this set is empty, go to the next interaction.
        \item  Then, independently, choose one vertex $w$ uniformly from the set \[\left\{ w \in [n] : J(T_w, C^\text{out}_a) = 1 \right\}.\]
        If this set is empty, go to the next interaction.
        \item Set $(v, w) \in \vec{E}$. 
    \end{enumerate}
\end{enumerate}
We denote this model by $\texttt{RaCInG}_{n, \mu}(T, C, I, J)$. 
\end{definition}

Definition~\ref{def:RaCInG algorithm} provides a natural way to generate instances of RaCInG, but the strength of the CCI model -- as explored in Theorem 3.6 of \cite{vanSantvoort2023FromCounts} -- is its connection to inhomogeneous random digraphs. Under mild conditions the theorem allows us to identify properties of RaCInG by computing the same property in an equivalent inhomogeneous random digraph model. We first give a definition of inhomogeneous random digraphs, after which we repeat the central theorem form \cite{vanSantvoort2023FromCounts} and its conditions.

\begin{definition}[Inhomogeneous random digraphs]\label{def:IRD}
    Fix a type space $\mathcal{S}$, a function $\kappa_n : \mathcal{S}^2 \to \mathbb{R}_+$ and a distribution $T$ supported on $\mathcal{S}$. An \emph{inhomogeneous random digraph} is a model, denoted by $\texttt{IRD}_n(T, \kappa_n)$, that outputs a directed graph $\vec{G} = ([n], \vec{E})$ using the following algorithm:
    \begin{enumerate}[label = \arabic*.]
        \item Assign each vertex $v \in [n]$ an independent type $T_v \sim T$.
        \item Given the vertex types $(T_v)_{v \in [n]}$, let $(X_{vw})_{v, w \in [n]}$ be independent Bernoulli random variables with success probability $\min\{\kappa_n(T_v, T_w)/n, 1\}$. Set $(v, w) \in \vec{E}$ for each $v, w \in [n]$ such that $v \neq w$ if and only if $X_{vw} = 1$. 
    \end{enumerate}
\end{definition}
\begin{remark}
    The undirected version of IRD (called \emph{inhomogeneous random graphs}; IRG) can be similarly defined. The main differences are that $\kappa_n$ needs be symmetric and that only $X_{vw}$ for $v < w$ is considered.
\end{remark}

If we want to identify RaCInG as an IRD instance, then we need to specify its function $\kappa_n$, which is called the \emph{kernel}. This will determine the behaviour of the model.

\begin{definition}[RaCInG kernel]\label{def: RaCInG kernel}
   Let $q_k := \prob(T = k)$ and $p_{ij} := \prob(C = (i, j))$. We will call $\kappa$ the \emph{RaCInG kernel} if it is given by
    \begin{equation}
    \kappa(t, s) =\mu \sum_{i \in \mathcal{L}} \sum_{j \in \mathcal{R}}\frac{p_{ij} \cdot I(t, i) J(s, j)}{\lambda_i \cdot \varrho_j}, \label{eq:asymp connection number}
\end{equation}
with
\begin{subequations}
\begin{equation}
    \lambda_i = \sum_{k \in \mathcal{S}} q_k I(k, i),\label{eq:fraction of vertices connecting to lig}
\end{equation}
\begin{equation}
    \varrho_j = \sum_{k \in \mathcal{S}} q_k J(k, j).\label{eq:fraction of vertices connecting to rec}
\end{equation}
\end{subequations}
\end{definition}

The translation between RaCInG and IRD can not be done for all events; the models are different. Therefore, we need to specify a class of events under which the translation can be made.

\begin{definition}[Monotone events]\label{def:monotone events}
    Let $G_1$ and $G_2$ be two graphs such that $G_1 \subseteq G_2$. We say a collection $\mathcal{Q}_n$ of events is \emph{increasing} if $G_1 \in \mathcal{Q}_n$ implies $G_2 \in \mathcal{Q}_n$. Similarly, we say $\mathcal{Q}_n$ is \emph{decreasing} if $G_2 \in \mathcal{Q}_n$ implies $G_1 \in \mathcal{Q}_n$. Finally, we say $\mathcal{Q}_n$ is \emph{monotone} if it is either increasing or decreasing.
\end{definition}

Finally, for convenience we put some restrictions on RaCInG to ensure certain fringe cases cannot occur. 

\begin{assumption}[Finite support]\label{ass:finite support}
We assume that $|\mathcal{S}|, |\mathcal{L}|,|\mathcal{R}| < \infty$.
\end{assumption}

Note that this restriction is stronger than in \cite{vanSantvoort2023FromCounts}. However, we will consider it in favour of the exact condition in \cite{vanSantvoort2023FromCounts}, because (1) biologically the number of cell- and protein types considered is always finite and (2) it allows us to forego many of the technicalities that will obscure generalisations to the model. 

We can now repeat Theorem 3.6 of \cite{vanSantvoort2023FromCounts}. Note its statement is simplified, because Assumption~\ref{ass:finite support} removes the need for some technicalities in the original statement.

\begin{theorem}[RaCInG to IRD; see \cite{vanSantvoort2023FromCounts}]
	\label{thm:CCI to IRD}
	Consider $\texttt{RaCInG}_{n, \mu}(T, C, I, J)$ under Assumption~\ref{ass:finite support} and let $\kappa$ be as in Definition~\ref{def: RaCInG kernel}. Let $\mathcal{Q}_n$ is a monotone event. If there exists a number $p \in [0, 1]$ such that $\prob(\IRD_n(T, \kappa'_n) \in \mathcal{Q}_n) \to p$ for all sequences $\kappa'_n $ satisfying the inequality
	\begin{equation}\label{eq:kern sequence CCI}
		|\kappa'_n(t, s) - \kappa(t, s) | \leq c n^{-1/10}
	\end{equation}
    for large enough $c > 0$ fixed, then we also have that $\prob(\CCI_{n, \mu}(T, C, I, J) \in \mathcal{Q}_n) \to p.$
\end{theorem}

In short, Theorem~\ref{thm:CCI to IRD} tells us that a given monotone property has limiting probability $p$ in RaCInG if the same property converges for a range of IRD models (with slightly different kernels) to that same value $p$.

In this paper we seek to formulate theorems like Theorem~\ref{thm:CCI to IRD} in the case where RaCInG is extended to the realm of undirected graphs (Section~\ref{sec:dir to undir}) and spatial graphs (Section~\ref{sec:SpaCInG}). Such theorems would retain the main benefit of RaCInG -- rigorously known asymptotic behaviour that can be used to compute CCI properties -- whilst adapting it to different biological contexts. We will start each of the following sections by highlighting a particular biological problem that is solved. We will apply the main result of each section to count direct interactions between two vertices of a given type to illustrate how the main results can be applied in a modelling context. All in all, the results in this paper show that indeed RaCInG can be flexibly altered to new biological contexts and data modalities.

\section{Using RaCInG to model undirected interactions}\label{sec:dir to undir}

\paragraph{Biological problem.} RaCInG creates directed graphs to model cell-cell interactions, assuming that signals always go from a ligand towards a receptor. However results in \cite{vanSantvoort2023MathematicallyMicroenvironment} and \cite{Gibbs2021Patient-SpecificCancer} show that this directionality assumption does not always hold. Thus, RaCInG needs to be extended to an undirected version that does not make this assumption. Originally, this was done in \cite{vanSantvoort2023MathematicallyMicroenvironment} by simply adding together all directed features that would turn into the undirected feature if directions are forgotten (see Figure~\ref{fig:forgetting direction} for an illustration). Despite this approach seemingly working, there was no mathematical justification for this.

In this section we will validate to what extent it is possible to generalize results about RaCInG to a setting where we forget the direction of its arcs. We will do this by identifying two classes of random (di)graph models, and by showing that an equivalence in the directed models implies a similar equivalence in the undirected classes. We will link this result to RaCInG by showing that inhomogeneous random graphs (IRG; the undirected version of IRD) can be used to calculate its undirected properties.

\begin{figure}
    \centering
    \includegraphics[scale = 0.7]{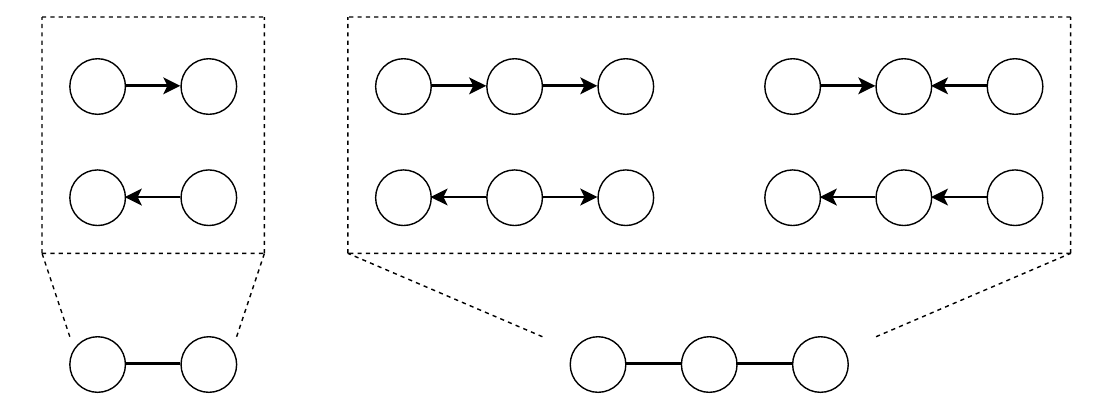}
    \caption{\emph{Examples of the directed features that would be added to get an undirected feature.}}
    \label{fig:forgetting direction}
\end{figure}

\subsection{Graphs with independent edge/arc probabilities}
The first class of models we will investigate, are \emph{independent arc/edge graphs}. Their defining feature is that their vertices are generated randomly, after which existence of edges/arcs is determined by coin-flips. We will give the formal definition of this model both in the undirected and directed case.

\begin{definition}[Independent edge/arc graphs]\label{def:IAG and IEG}
    Fix a set $\mathcal{V}$ and let $(V_n)_{n \geq 1}$ be a sequence of $n$-dimensional random vectors with entries in $\mathcal{V}$ such that $|V_n| = n$. Furthermore, fix a function $\pi_n : \mathcal{V} \times \mathcal{V} \to [0, 1]$. 
    
    In the \emph{independent arc graph}, we generate a digraph $\vec{G}_n = (V_n, \vec{E}_n)$ by first realising $V_n$ followed by a sequence of independent Bernoulli random variables $I_{vw} \sim \texttt{Bern}(\pi_n(v, w))$. We have that $(v, w) \in \vec{E}_n$ if and only if $I_{vw} = 1$. We denote this model by $\IAG_n(V_n, \pi_n)$.
    
    Similarly, for the \emph{independent edge graph}, we generate a graph $G_n = (V_n, E_n)$ by first realising $V_n$ followed by a sequence of independent Bernoulli random variables $I_{vw} \sim \texttt{Bern}(\pi_n(v, w))$ for each unordered pair $v, w \in V_n$. We have that $\{v, w\} \in E_n$ if and only if $I_{vw} = 1$. We denote this model by $\IEG_n(V_n, \pi_n)$.
    
\end{definition}
\begin{remark}
    Since $\IEG_n(V_n, \pi_n)$ generates edges only for each \emph{ordered pair} $v, w\in V_n$, we require $\pi_n$ to be symmetric. In $\IAG_n(V_n, \pi_n)$ this needs not be the case.
\end{remark}

Since $\mathcal{V}$ can be any set we want, and since $V_n$ can be any vector derived from $\mathcal{V}$, the models IAG and IEG are quite broad. In particular, inhomogenous random (di)graphs \cite{Cao2020ConnectivityDigraphs, Bollobas2007TheGraphs} are specific models within this class. We give three examples.

\begin{example}[Independent edge/arc graphs]\label{ex:IEG and IAG}
    For each of the examples we will specify the set $\mathcal{V}$, the vector $V_n$ and the probability function $\pi_n$.
    \begin{enumerate}[label = (\alph*)]
        \item\label{ex:Gil} We can define both the directed and undirected \emph{Gilbert model} (see \cite{Gilbert1959RandomGraphs}) by taking $\mathcal{V} = \mathbb{N}$ with $V_n = [n]$ and setting $\pi_n(v, w) = p_n$ for some constant $p_n \in [0, 1]$. We will abbreviate this undirected model by $\texttt{Gil}_n(p_n)$ and its directed version by $\texttt{DGil}_n(p_n)$.

        \item The \emph{geometric inhomogeneous random (di)graph} model (see \cite{Bringmann2019GeometricGraphs}) sets $\mathcal{V} = \mathbb{N} \times \mathbb{T}^d \times \mathbb{R}^+$ where $\mathbb{T}^d$ indicates the $d$-dimensional torus. The vector $V_n$ consists of $n$ vertices of the form $(k, x_k, w_k)$ where $k \in [n]$ and $(x_k, w_k)$ for fixed $k$ is sampled according to some distribution on $ \mathbb{T}^d \times \mathbb{R}^+$. Finally, $\pi_n$ satisfies
        \[
        \pi_n((k, x, v), (l, y, w)) = \left(\frac{1}{\|x - y \|^{\alpha d}} \cdot \left( \frac{vw}{n \lambda} \right)^{\alpha}\right) \wedge 1,
        \]
        where $\alpha, \lambda > 0$ are constants and $\| \cdot \|$ is the $\infty$-norm on $\mathbb{T}^d$.

        \item\label{ex:IRD} The \emph{inhomogeneous random (di)graph} model sets $\mathcal{V} = \mathbb{N} \times \mathcal{S}$ for some ground space $\mathcal{S}$. The vector $V_n$ consists of $n$ tuples of the form $(k, T_k)$ where $k \in [n]$ and $T_k$ is an i.i.d. sample from $\mathcal{S}$ according to some distribution. Finally, $\pi_n$ satisfies
        \[
        \pi_n((k, t), (l, s) ) = \left( \frac{\kappa(t, s)}{n} \right) \wedge 1,
        \]
        for some function $\kappa : \mathcal{S} \times \mathcal{S} \to \mathbb{R}_+$.
    \end{enumerate}
\end{example}
\begin{remark}
    Compare Example~\ref{ex:IEG and IAG}\ref{ex:IRD} to Definition~\ref{def:IRD} to verify that both describe the same (directed) model.
\end{remark}

\subsection{Graphs with a fixed number of edges and arcs}
The second class of models we consider has its number of arcs/edges fixed. It again generates vertices, but conditional on the vertices each possible arc/edge location is given a weight. The fixed number of arcs/edges are sampled with probability proportional to this weight. We will call this model the \emph{edge/arc selection random (di)graph}.

\begin{definition}[Edge/arc selection random graphs]\label{def:ESRG and ASRG}
     Fix a set $\mathcal{V}$ and let $(V_n)_{n \geq 1}$ be a sequence of $n$-dimensional random vectors with entries in $\mathcal{V}$ such that $|V_n| = n$. Also, given $V_n$, fix a mass function $\nu_n : V_n \times V_n \to [0, 1]$ and a number $m_n \in \mathbb{N}$. In the \emph{edge selection random graph model}, $\nu_n$ is defined on unordered tuples, while in the \emph{arc selection random graph model} it is defined on ordered tuples. Given $\nu_n$, we realise a (di)graph $G_n = (V_n, E_n)$ as follows:
     \begin{enumerate}\itemsep0em
         \item Sample $V_n$ and start with $E_n = \emptyset$.
         \item Draw edges/arcs independently from $\nu_n$ until you hit an edge/arc $(v, w)$ that is not part of $E_n$ yet.
         \item Set $(v, w) \in E_n$, and go back to step 2 until $|E_n| = m_n$.
     \end{enumerate}
     We denote the edge selection random graph model by $\ESRG_n(V_n, \nu_n, m_n)$ and the arc selection random graph model by $\ASRG_n(V_n, \nu_n, m_n)$.
\end{definition}

\begin{remark}
    Note from Definition~\ref{def:ESRG and ASRG} that $\nu_n$ must be \emph{symmetric} in $\ESRG$, but not necessarily in $\ASRG$.
\end{remark}

We note that RaCInG is almost an independent arc graph. The main difference is that Definition~\ref{def:ESRG and ASRG} does not allow for self-loops and multi arcs, while Definition~\ref{def:RaCInG algorithm} does. However this discrepancy is alleviated by Lemmas 4.12 and 4.13 in \cite{vanSantvoort2023FromCounts}, showing that self-loops and multi arcs asymptotically do not exist in RaCInG. We again give three examples of edge/arc selection random graphs.
\begin{example}[Edge/arc selection random graphs]\label{ex:ESRG and ASRG}
    For each example we specify the set $\mathcal{V}$, the random vector $V_n$ and the measure $\nu_n$.
    \begin{enumerate}[label = (\alph*)]
        \item\label{ex:ER} We can define both the directed and undirected \emph{Erd\H{o}s-R\'enyi model} (see \cite{Erdos1959OnI.}) by taking $\mathcal{V} = \mathbb{N}$ with $V_n = [n]$ and setting $\nu_n$ be the uniform distribution. We will abbreviate the undirected version of this model by $\texttt{ER}_n(m)$ and its directed counterpart by $\texttt{DER}_n(m)$.

         \item\label{ex:EERG} We can define (undirected) \emph{edge-exchangeable random graphs} (see \cite{Janson2018OnGraphs}) by setting $\mathcal{V} = \mathbb{N}$ with $V_n = [n]$, and by fixing a global function $\nu : \mathbb{N}^2 \to \mathbb{R}^+$. To sample edges we first realize $V_n$, then construct $\nu_n$ form $\nu$ by restricting it to and normalizing it with respect to $V_n$.

         \item\label{ex:RaCInG} $\texttt{RaCInG}_{n, \mu}(T, C, I, J)$ that disregards self-loops and multi-edges is a special instance of $\ASRG$ by setting $\mathcal{V} = \mathbb{N} \times \mathcal{S}$ with elements in $V_n$ of the form $(k, T_k)$, where $k \in [n]$ and $T_k$ is an i.i.d. random variable supported on $\mathcal{S}$. To define $\nu_n$ we set 
        \begin{align*}
            L_i = \sum_{k \in \mathcal{S}} |\{(v, T_v) \in V_n : T_v = k\}|  \cdot I(k, i), \qquad \text{and} \qquad
            R_j = \sum_{k \in \mathcal{S}} |\{(v, T_v) \in V_n : T_v = k\}|  \cdot J(k, j),
        \end{align*}
        and $p_{ij} = \prob(C = (i, j))$ to get
        \[
        \nu_n((v, T_v), (w, T_w)) = \sum_{i \in \mathcal{L}} \sum_{j \in  \mathcal{R}} \frac{p_{ij} I(T_v, i) J(T_w, j)}{L_i R_j}.
        \] 
        Finally, the model chooses $m_n = \lfloor \mu n \rfloor$ for some fixed $\mu > 0$.
    \end{enumerate}
    \end{example}

\subsection{Extending equivalences from directed to undirected random graphs}
We will see that the undirected and directed counterparts of Definition~\ref{def:IAG and IEG} and~\ref{def:ESRG and ASRG} are equivalent to each other. Using these equivalences, we can prove that an equivalence between directed instances of these model classes implies an equivalence to the undirected instances. As a bonus, a corollary to this result will be that the equivalence in Theorem~\ref{thm:CCI to IRD} implies an equivalence between undirected models, validating the approach in \cite{vanSantvoort2023MathematicallyMicroenvironment}.

We start by introducing some notation to keep track of the largest probabilistic mass in $\IRD$ and $\ASRG$ and define what the undirected versions of $\pi_n$ and $\nu_n$ are. Note that all quantities are stochastic, since they depend on $V_n$.
\begin{notation}
    Given the realization of $V_n$ in $\IAG_n(V_n, \pi_n)$ and $\ASRG_n(V_n, \nu_n, m_n)$ we will set
    \[
    \pi^\uparrow_n := \max_{i, j \in V_n} \pi_n(i, j), \quad \text{and} \quad \nu^\uparrow_n := \max_{i, j \in V_n} \nu_n(i, j).
    \]
    Also, for all $v, w \in V_n$ we will set
    \[
    \pi^\circ_n(v, w) := \min\{\pi_n(v, w) + \pi_n(w,v), 1\}, \quad \text{and} \quad \nu^\circ_n := \nu_n(v,w) + \nu_n(w,v).
    \]
\end{notation}

Note that $\pi_n^\circ$ and $\nu_n^\circ$ can be considered doing the same as generating the directed model, and subsequently throwing away the directions of the arcs. This is because $\pi_n^\circ$ and $\nu_n^\circ$ add together the probabilistic mass of both directions of a given arc to achieve one undirected edge mass. 

\begin{remark}
Note that the minimum with one is a purely a technical condition that only might become relevant if $\pi_n(v, w) = \Theta(1)$. However, we will have $\pi_n(v, w) = o(1)$, meaning for $n$ large the minimum with one is not needed.
\end{remark}

We will see that we can couple the directed models in Definition~\ref{def:IAG and IEG} and~\ref{def:ESRG and ASRG}, such that under the coupling the undirected model will have more edges in different locations. These extra edges should not matter in the limit. Therefore, we can only expect equivalence for events where a few extra edges do not matter. We formalize this idea in the following definition. Note that we define it slightly broader than needed for this section, since we will need a similar concept in Section~\ref{sec:SpaCInG}.
\begin{definition}[Event insensitivity]\label{def: event insensitivity}
    Fix an event $\mathcal{Q}_n$ and two rates $r_n, r'_n \in \mathbb{N}$. Let $\texttt{RG}_n$ be an arbitrary random graph model, and let $\texttt{RG}_n^\uparrow(r_n, r_n')$ be derived from the realisation of $\texttt{RG}_n$ by first adding at most $r_n'$ vertices (with degree zero), and then $r_n$ edges. Similarly, define $\texttt{RG}_n^\downarrow(r_n, r_n')$ by first removing at most $r_n$ edges, and then at most $r_n'$ vertices with degree zero. We say that $\mathcal{Q}_n$ is \emph{insensitive for increase at rate} $(r_n, r_n')$ if we always have that
    \[\prob(\texttt{RG}_n \in \mathcal{Q}_n) = \prob(\texttt{RG}_n^\uparrow(r_n, r_n') \in \mathcal{Q}_n) + o(1).\] 
    Moreover, we say that $\mathcal{Q}_n$ is \emph{insensitive for decrease at rate} $(r_n, r_n')$ if we always have that
    \[\prob(\texttt{RG}_n \in \mathcal{Q}_n) = \prob(\texttt{RG}_n^\downarrow(r_n, r_n') \in \mathcal{Q}_n) + o(1).\] 
    If both are true, then $\mathcal{Q}_n$ is \emph{completely insensitive at rate} $(r_n, r_n')$.
\end{definition}
\begin{remark}
    Despite insensitivity being defined for random graphs in the previous definition, we can analogously define it for random digraphs. Also, note that event insensitivity does not specify how vertices/edges get added. Thus, if insensitivity is required on an event for a theorem, then it forces a ``worst-case'' analysis of what can happen when vertices/edges get added. Often, this requirement is too strong if we know the process in which vertices/edges get added (which we will for the two model extensions in this paper). We can then always replace the insensitivity requirement by an appropriate weaker condition. However, we will not do this, because (1) it perturbs a uniform language of error-control between models and (2) it adds extra technicalities to proofs.
\end{remark}

Finally, before we can state the main result of this section, we need to define how events in a digraph model relate to those in an undirected random graph model. In essence, we get an undirected random graph event by taking a directed random graph and simply changing all the arcs into edges. If through this process a multi-edge emerges, we collapse them into one edge. We call this process \emph{forgetting} the directions.
\begin{definition}[Forgetful map]\label{def: forgetful map}
    Let $\mathbb{G}_n$ denote the set of simple graphs with $n$ vertices, and let $\vec{\mathbb{G}}_n$ denote the set of of simple digraphs with $n$ vertices. We define the \emph{forgetful map} $\mathcal{U}: \vec{\mathbb{G}}_n \to \mathbb{G}_n$ that maps a digraph $\vec{G} = (V, \vec{E})$ to a graph $G = (\mathcal{U}(V), \mathcal{U}(\vec{E}))$ as follows:
    \begin{enumerate}[label = \Roman*.]\itemsep0em
        \item $\mathcal{U}(V) = V$, and
        \item $\mathcal{U}(\vec{E}) = \{\{v,w\} \, : \, (v,w) \in \vec{E} \text{ or } (w,v) \in \vec{E}\}$. 
    \end{enumerate}
\end{definition}

If we are given an undirected graph event and aim to turn it into a set of directed graphs, we can simply take the pre-image $\mathcal{U}^{-1}$ of the forgetful map in Definition~\ref{def: forgetful map}. We can now formulate the main result, whose proof can be found in Section~\ref{sec: main result dir to indir}.

\begin{theorem}[Directed equivalence to undirected equivalence]\label{thm:direct implies undirect}
    Consider $\IAG_n(V_n, \pi_n)$ and $\ASRG_n(V_n, \nu_n, m_n)$ assuming that $m_n \nu_n^\uparrow \to 0$ in probability. Moreover, let $\omega(n) \to \infty$ arbitrarily slowly and fix
    \begin{equation}\label{eq:Translation rate dir undir}
     r_n := \max\left\{ \omega(n)(n \pi_n^\uparrow)^2, \; \omega(n) m_n^2 \nu^\uparrow_n , \; 1 \right\}.
    \end{equation}
    Finally, consider an event $\mathcal{Q}_n$ for which both of the following two conditions are satisfied:
    \begin{enumerate}
        \item At rate $(r_n, 0)$ with probability tending to one, either $\mathcal{Q}_n$ is insensitive for decrease in $\IEG_n(V_n, \pi_n^\circ)$ or $\mathcal{U}^{-1}(\mathcal{Q}_n)$ is insensitive for increase in $\IAG_n(V_n, \pi_n)$.
        \item At rate $(r_n, 0)$ with probability tending to one, either $\mathcal{Q}_n$ is insensitive for decrease in $\ESRG_n(V_n, \nu_n^\circ, m_n)$ or $\mathcal{U}^{-1}(\mathcal{Q}_n)$ is insensitive for increase in $\ASRG_n(V_n, \nu_n, m_n)$.
    \end{enumerate}
    Then,
    \begin{equation}\label{eq:IAG ASRG equivalence}
    \prob(\IAG_n(V_n, \pi_n) \in \mathcal{U}^{-1}(\mathcal{Q}_n)) = \prob(\ASRG_n(V_n, \nu_n, m_n) \in \mathcal{U}^{-1}(\mathcal{Q}_n)) + o(1),
    \end{equation}
    implies,
    \begin{equation}\label{eq:IEG IAG equivalence}
    \prob(\IEG_n(V_n, \pi_n^\circ) \in \mathcal{Q}_n) = \prob(\ESRG_n(V_n, \nu_n^\circ, m_n) \in \mathcal{Q}_n) + o(1).
    \end{equation}
\end{theorem}

A direct corollary to this theorem, see Section~\ref{sec: main result dir to indir} for the proof, tells us that the approach taken in \cite{vanSantvoort2025AnData} to obtain undirected features is correct.

\begin{corollary}[Undirected RaCInG to IRG]
    \label{cor:CCI to IRG}
 Consider $\texttt{RaCInG}_{n, \mu}(T, C, I, J)$, let $\mathcal{Q}_n$ be a monotone event and fix an arbitrary function $\omega(n) \to \infty$. Furthermore, set $\kappa$ as in Definition~\ref{def: RaCInG kernel}, and set $\nu_n$ as in Example~\ref{ex:ESRG and ASRG}\ref{ex:RaCInG}. 
    
    Suppose there exists a number $p \in [0, 1]$ such that
    \[
    \prob(\texttt{IRG}_n(T, \kappa^\circ_n) \in \mathcal{Q}_n) \to p,
    \]
    as $n \to \infty$ for all sequences $\kappa^\circ_n = \kappa_n(t,s) + \kappa_n(s,t)$ where $\kappa_n$ satisfies the inequality
\begin{equation}\label{eq:kernel bound undirect}
|\kappa_n(t, s) - \kappa(t, s) | \leq c n^{-1/10},
\end{equation}
for large enough fixed $c > 0$. Moreover, suppose that both of the following are true:
\begin{enumerate}
    \item At rate $(\omega(n), 0)$ for all $\kappa_n$ with probability tending to one, either $\mathcal{Q}_n$ is insensitive for decrease in $\texttt{IRG}_n(T, \kappa_n^\circ)$ or $\mathcal{U}^{-1}(\mathcal{Q}_n)$ is insensitive for increase in $\texttt{IRD}_n(T, \kappa_n)$.
    \item At rate $(\omega(n), 0)$ with probability tending to one, either $\mathcal{Q}_n$ is insensitive for decrease in $\texttt{RaCInG}_{n, \mu}(T, C, I, J)$ or $\mathcal{U}^{-1}(\mathcal{Q}_n)$ is insensitive for increase in $\texttt{ESRG}_n(V_n, \nu_n^\circ, \lfloor \mu n \rfloor)$.
\end{enumerate}
Then, we also have that
\[
\prob(\ESRG_n(V_n, \nu_n^\circ, \lfloor \mu n \rfloor)  \in \mathcal{Q}_n) \to p.
\]
\end{corollary}

We can now use Corollary~\ref{cor:CCI to IRG} to validate we can obtain undirected RaCInG features by adding together directed features that would turn into the desired undirected feature if directions are forgotten. We illustrate this using the following proposition (see Section~\ref{sec:proofs_propositions} for the proof).

\begin{prop}[Direct interactions]\label{prop:direct racing}
     Let $E_n^{ts}$ denote the number of edges between two different vertex types $t, s \in \mathcal{S}$ in $\texttt{ESRG}_n(V_n,\nu_n^\circ,\lfloor\mu n\rfloor)$ with $\nu_n$ as in Example~\ref{ex:ESRG and ASRG}\ref{ex:RaCInG}. We have that $E_n^{ts}/n \to (\kappa(t, s) + \kappa(s, t))q_tq_s$ in probability with $\kappa$ as in Definition~\ref{def: RaCInG kernel}.
\end{prop}
\begin{remark}
    Note how the limit in Proposition~\ref{prop:direct racing} adds together both possible arc directions, validating the approach taken in \cite{vanSantvoort2023FromCounts}. Similar results can also be obtained when e.g. counting the number of wedges (paths of length 2; see Figure~\ref{fig:forgetting direction}). However, these require more technical arguments to show the required event insensitivity.
\end{remark}

\section{Using RaCInG to model interaction in space}\label{sec:SpaCInG}

\paragraph{Biological problem.} RaCInG was designed for so-called bulk RNA sequencing data. This is data about gene expression where all signals in a tissue are blended together. Nowadays, researchers have access to more sophisticated data where the location of gene signals is also known. This data is called spatial omics \cite{vanSantvoort2025AnData}. We want RaCInG to be applicable to this data too. 

In this section we shall generalize RaCInG to a cell-cell interaction model with spatial structure, applicable to spatial omics data. We will first explore how Definition~\ref{def:RaCInG algorithm} can be adapted to incorporate spatial information. This adaptation will be called SpaCInG. Thereafter, we will state an analogue to Theorem~\ref{thm:CCI to IRD} in case of this new model.

\subsection{SpaCInG as an algorithm}
Spatial omics data exists in multiple forms, but generally cell-cell inference models condense its information into distinct spatial blocks called ``spots'' \cite{vanSantvoort2025AnData}. Each spot has a discrete location and its own gene expression data. Using literature databases these genes can be split into genes used by cells to send information (ligands) and genes used by cells to receive it (receptors). Moreover, the gene expression per spot can be deconstructed using known gene expression profiles of cells to find the likelihood of each cell appearing at each spot. This process is called deconvolution.

Based on the description of the available data, together with the general structure of RaCInG, we can leverage the following mathematical objects for a spatial extention: 
\begin{enumerate}[label = \arabic*.]
    \item A set of spatial locations $\mathcal{X}$.
    \item An arc colour distributions for each $x \in \mathcal{X}$ given by $C^{(x)}$ supported on $\mathcal{C}$.
    \item A vertex type distribution for each $x \in \mathcal{X}$ given by $T^{(x)}$ supported on $\mathcal{S}$.
    \item A binary arc colour compatibility matrix $\1$ supported on $\mathcal{C}^2$.
    \item A vertex type compatibility matrix with out-arc colours $I$ supported on $\mathcal{S} \times \mathcal{C}$.
    \item A vertex type compatibility matrix with in-arc colours $J$ supported on $\mathcal{S} \times \mathcal{C}$.
\end{enumerate}
\begin{remark}
    Note that the vertex type compatibility with out- and in-arcs can be inferred from the cell compatibility matrix together with knowledge about what genes serve as ligand/receptors in the LR compatibility matrix. In other words, $I$ and $J$ are filtrations of the cell compatibility matrix.
\end{remark}

From the described spatial omics data it is unclear how many vertices need be sampled at each spatial location. These locations encode the spatial structure of a tissue. Inside each of them a number of vertices reside that drive cell-cell interactions. It is only known that each of the locations roughly contain the same amount of vertices (because locations are usually spaced equidistantly) \cite{Moses2022MuseumTranscriptomics}, but the exact amount cannot always be determined or even reliably estimated from gene expression data \cite{Tian2022TheExpandingTranscriptomics}. Therefore, to ensure that each $x \in \mathcal{X}$ has the opportunity to equally contribute to CCIs, we assume that each location contains the same amount of vertices.

Additionally, we assume that the support of vertex types, colours and spatial locations is finite. We do this for two reasons: (1) biologically the number of cells in a tissue is much larger than the types of cells and genes considered, and (2) we have seen in the proof of Theorem~\ref{thm:CCI to IRD}, in~\cite{vanSantvoort2023FromCounts}, that technical difficulties arise when supports are taken to be infinite that distract from the main arguments of equivalence. We can summarise the mathematical assumptions for an algorithmic description of SpaCInG as follows.
\begin{assumption}[SpaCInG assumptions]\label{ass:SpaCInG}
    We assume that $\mathcal{S}$, $\mathcal{C}$ and $\mathcal{X}$ are finite sets. Moreover, we assume that the number of vertices at each $x \in \mathcal{X}$ is the same.
\end{assumption}

To stay in line with RaCInG we will use the same modelling principles in SpaCInG. This means that we will have a fixed number of vertices $n \in \mathbb{N}$ and a fixed number of interactions $\lfloor \mu n \rfloor$ for some $\mu > 0$. We will build up an instance of a CCI network by first using input 1. to independently sample the cell types per spot. Then 2. and 3. will be combined with a distance penalty $d$ into a spatial interaction distribution. From this interaction distribution all $\lfloor \mu n \rfloor$ interactions will be sampled independently, and attached to vertices. This is where input 4. and 5. will be used to choose uniformly which vertices get assigned each interaction, because we have no information about the affinity that each interaction has for each cell type. All in all, this leads us to the following algorithmic description of SpaCInG:

\begin{definition}[SpaCInG as algorithm]\label{def:SpaCInG algorithm}
We fix a number of vertices $n$ to be generated and an average in-/out-degree $\mu$. We construct graphs $\vec{G} = (V, \vec{E})$ as follows:
\begin{enumerate}[label = \arabic*.]
    \item Set $V = [n]$ and assign all vertices a spatial location $x_v$ as follows:
    \begin{enumerate}[label = \alph*.]
        \item Designate one $y \in \mathcal{X}$ to be the \emph{final} spatial location.
        \item For each $x \in \mathcal{X}$ such that $x \neq y$ assign $\lfloor n / |\mathcal{X}|\rfloor$ vertices the location $x$.
        \item Assign the remaining $n - |\mathcal{X}| \cdot  \lfloor n / |\mathcal{X}| \rfloor$ vertices to location $y$.
    \end{enumerate}
    \item For each $v \in [n]$ sample an independent type $T_v$ from the distribution $T^{(x)}$ if $x_v = x$.
    \item Sample $\lfloor \mu n \rfloor$ arcs. Each arc is defined by a quadruple $(x, i, y, j)$ with $x, y \in \mathcal{X}$ and $i, j \in \mathcal{C}$. For each arc, this quadruple is sampled independently and proportionally to \[ \frac{\prob\left(C^{(x)} = i \right) \prob\left( C^{(y)} = j\right) \1\{i \to j\}}{ d(\|x - y\|, i, j)}, \] where $\|x - y\|$ denotes the Euclidean distance between locations $x, y \in \mathcal{X}$.
    \item Loop over all arcs, and given their quadruple $(x, i, y, j)$ assign the arc to vertices as follows:
    \begin{enumerate}[label = \alph*.]
        \item First, choose a vertex $v$ uniformly from the set
        \[
        \{ v \in [n] : x_v = x \enspace \wedge \enspace I(T_v, i) = 1\}.
        \]
        If this set is empty, go to the next arc.
        \item Then, independently, choose a vertex $w$ uniformly from the set
        \[
        \{ w \in [n] : x_w = y \enspace \wedge \enspace J(T_v, j) = 1\}.
        \]
        If this set is empty, go to the next arc.
        \item Set $(v, w) \in \vec{E}$. 
    \end{enumerate}
\end{enumerate}
We denote this model by $\texttt{SpaCInG}_{n, \mu}$, omitting the random variables and constants needed in the definition of the model. 
\end{definition}

To aid in the analysis of SpaCInG, we will end this section by introducing some notation, similarly to the RaCInG case, that will prove to be useful.

\begin{notation}\label{not:spacing mathematical notation}
    We will set $p_i^x := \prob(C^{(x)} = i)$ and $q_t^x := \prob(T^{(x)} = t)$. Furthermore, defining
    \[
    \alpha := \sum_{x \in \mathcal{X}} \sum_{y \in \mathcal{X}} \sum_{i \in \mathcal{C}}\sum_{j \in \mathcal{C}} \frac{p_i^x p_j^y \1\{i \to j\}}{ d(\|x - y\|, i, j)},
    \]
    we also set \[p_{ij}^{xy} := \frac{p_i^x p_j^y \1\{i \to j\}}{\alpha \cdot d(\|x - y\|, i, j)} .\]
\end{notation}

\subsection{Model equivalence}
We expect SpaCInG to be equivalent to IRDs (recall Definition~\ref{def:IRD}), because Definition~\ref{def:SpaCInG algorithm} is closely related to the algorithmic description of RaCInG in Definition~\ref{def:RaCInG algorithm}. To make this formal, we analyse under what conditions RaCInG and SpaCInG produce graph events $\mathcal{Q}_n$ with the same probabilities. Then, we apply Theorem~\ref{thm:CCI to IRD} to find the equivalence.

We can link RaCInG to SpaCInG by identifying its type distribution as the random vector that describes the spatial location and vertex type distribution simultaneously. This identification will not be exact, because the amount of vertices per spatial location in SpaCInG is fixed, but we will show in this section that under appropriate conditions this identification will generate events with the same probability. We will now explicitly define what we mean when we talk about the RaCInG model describing SpaCInG.

\begin{definition}[RaCInG describing SpaCInG]\label{def:RaCInG SpaCInG}
    Let $\hat{T}$ be the vertex-type random variable supported on $\mathcal{S}^+ = \mathcal{S} \times \mathcal{X}$ such that $\prob(\hat{T} = (t, x)) = q_t^x / |\mathcal{X}|$. Moreover, let $\hat{C}$ be the arc-colour distribution supported on $\mathcal{C}^+ \times \mathcal{C}^+$ where $\mathcal{C}^+ := \mathcal{C} \times \mathcal{X}$ with probability mass function $\prob(\hat{C} = ((i, x), (j, y))) = p_{ij}^{xy}$. Finally, let $\hat{I}: \mathcal{S}^+ \times \mathcal{C}^+ \to \{0, 1\}$ be the function given by $\hat{I}((t, x), (i, y)) = I(t, i) \1\{x = y\}$ and $\hat{J}: \mathcal{S}^+ \times \mathcal{C}^+ \to \{0, 1\}$ the function given by $\hat{J}((t, x), (i, y)) = J(t, i) \1\{x = y\}$. We will call $\texttt{RaCInG}_{n, \mu}(\hat{T}, \hat{C}, \hat{I}, \hat{J})$ the non-spatial model corresponding to $\texttt{SpaCInG}_{n, \mu}$.
\end{definition}

To prove our main result, we rely on Theorem~\ref{thm:CCI to IRD}. Therefore, we need to copy its event assumptions. However, because SpaCInG is not exactly a special instance of RaCInG in Definition~\ref{def:RaCInG SpaCInG}, we need to add an assumption on the RaCInG model that asserts that the probability of $\mathcal{Q}_n$ does not change when a limited number of vertices and edges get added to its output. Note that this is exactly the requirement from Definition~\ref{def: event insensitivity} when $r_n' \neq 0$. We are now in a position to state the main result. To this end, we define
\begin{subequations}
\begin{align}
    \kappa(t, x, s,y) &= \mu \sum_{(i,j) \in \mathcal{C}_{xy}} \frac{p_{ij}^{xy} I(t, i)J(s, j) }{\lambda_i^x \varrho_j^y }, \text{ with} \label{subeq: SpaCInG kernel} \\
    \lambda_i^x &= \sum_{k \in \mathcal{S}}  q_k^x I(k, i)\label{subeq:newlamba},   \\
    \varrho_j^y &= \sum_{k \in \mathcal{S}}  q_k^y J(k, j)\label{subeq:newrho}, \text{ and}\\
    \mathcal{C}_{xy} &:= \left\{ (i, j) \in \mathcal{L} \times \mathcal{R} : \lambda_i^x > 0 \wedge \varrho_j^y > 0 \right\}.
\end{align}   
\end{subequations}
Moreover, define the random vector $(X, T)$ by first realising $X \sim \texttt{Unif}\{1, \ldots, |\mathcal{X}|\}$, and then conditional on $\{X = x\}$ taking a sample from $T^{(x)}$. Note $(X, T)$ has the same probability mass function as $\hat{T}$ in Definition~\ref{def:RaCInG SpaCInG}.
\begin{theorem}[SpaCInG to IRD]\label{thm:SCCI to SIRD}
   Consider an instance of $\texttt{SpaCInG}_{n, \mu}$ such that Assumption~\ref{ass:SpaCInG} is satisfied. Furthermore, let $\mathcal{Q}_n$ be a monotone event that is completely insensitive in $\texttt{RaCInG}_{n, \mu}(\hat{T}, \hat{C}, \hat{I}, \hat{J})$ at rate $(c \log(n) \sqrt{n},  \log(n) \sqrt{n})$ for some $c$ large enough. If for all kernels $\kappa_n'$ that satisfy
    \begin{equation}\label{eq:kernel range SpaCInG}
            \left|\kappa(t,x,s,y) - \kappa_n'(t,x,s,y)  \right| \leq c n^{-1/10},
    \end{equation}
    we have for some $p \in [0, 1]$ that
    \begin{equation}\label{eq:convergence requirement SIRD}
    \prob(\IRD_{n \pm \log(n)\sqrt{n}}((X, T), \kappa_{n \pm \log(n)\sqrt{n}}') \in \mathcal{Q}_n) \to p.
    \end{equation}
    Then, we also have that
    \[
    \prob(\texttt{SpaCInG}_{n, \mu} \in \mathcal{Q}_n) \to p.
    \]
\end{theorem}

Theorem~\ref{thm:SCCI to SIRD} is the analogue to Theorem~\ref{thm:CCI to IRD} for SpaCInG. It tells us that the probability of a montone property converges to a limit $p$ in SpaCInG if the probability of the same property converges to $p$ for IRD instances with kernels that fall within a range, and where a number of at most $\log(n) \sqrt{n}$ vertices is added or removed.

Again, we will apply the main theorem to counting arcs between vertices with given vertex types, to show how this result can be used to compute graph properties of interest in a biological context. This yields the following proposition, whose proof is found in Section~\ref{sec:proofs_propositions}.

\begin{prop}[Direct spatial interactions]\label{prop:direct spacing}
    Denote by $A_{ts}^{xy}$ the number of arcs from a vertex with type $t \in \mathcal{S}$ at location $x \in \mathcal{X}$ to a vertex with type $s \in \mathcal{S}$ at location $y \in \mathcal{X}$. We have that
    \[
    \frac{A_{ts}^{xy}}{n} \to \frac{\kappa(t,x,s,y) q_t^x q_s^y}{|\mathcal{X}|^2},
    \]
    in probability as $n \to \infty$.
\end{prop}

\section{Proofs and strategies of the main results}\label{sec:main proofs}

\subsection{Main results of Section~\ref{sec:dir to undir}}\label{sec: main result dir to indir}
To prove Theorem~\ref{thm:direct implies undirect} we first recall that we have assumed a relationship between IAG and ASRG. To transform this into a relationship between IEG and ESRG we need to translate events in IEG into events in IAG, and events in ESRG into events in ASRG. This is where Definition~\ref{def: forgetful map} comes into play (see Figure~\ref{fig:proof dir to undir idea}). We note that under $\mathcal{U}$ events will have the same probability if the random graph models involved place their edges/arcs between the same vertices. Thus, the key to connect e.g. IAG and ASRG is by coupling them such that arcs/edges appear between the same vertices.

\begin{figure}
    \centering
    \includegraphics[scale = 0.7]{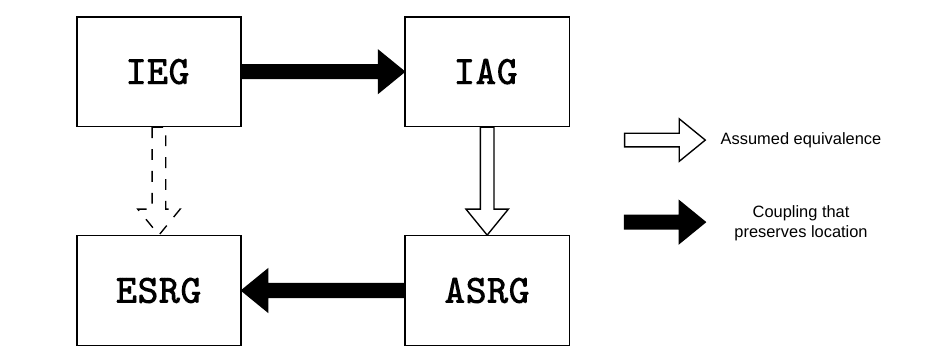}
    \caption{\emph{The chain of equivalences we will execute when proving Theorem~\ref{thm:direct implies undirect}.}}
    \label{fig:proof dir to undir idea}
\end{figure}

The rate of insensitivity (cf. Definition~\ref{def: event insensitivity}) needed for an event comes from the number of errors that is made when coupling the random graph and digraph model. Such an error is an instance where either an arc appears between to vertices in the directed model, but no edge appears in the undirected model or vice versa. 
\begin{definition}[Location errors]\label{def:loc coupl err}
    Let $\texttt{RG}_n = ([n], E)$ be a random graph model and $\texttt{RD}_n = ([n], \vec{E})$ a random digraph model, defined on the same probability space. Consider the following two random variables:
    \begin{itemize}
        \item[$\Xi^{(1)}_n$ --] The number of unordered vertex-pairs $v, w \in [n]$ for which $\{v, w\} \in E$, but $(v, w), (w, v) \notin \vec{E}$.
        \item[$\Xi^{(2)}_n$ --] The number of unordered vertex-pairs $v, w \in [n]$ for which $(v, w) \in \vec{E}$ or $(w, v) \in \vec{E}$, but $\{v, w\} \notin E$.
    \end{itemize}
    We then define the \emph{total number of location errors} as $\Xi_n := \Xi^{(1)}_n + \Xi^{(2)}_n$.
\end{definition}
When coupling the random graph models to random digraph models, we will see that $\Xi_n^{(2)} = 0$ can always be achieved. However, the value of $\Xi_n^{(1)}$ under the coupling will determine the rate of insensitivity ($r_n$ in Definition~\ref{def: event insensitivity}) required. If we need to translate from the directed model to the undirected model then we would have to add arcs to make the graphs have edges/arcs in the same locations, requiring insensitivity for increase. However, if we need to translate from the undirected model to the directed one we need to remove edges, requiring insensitivity for decrease. The next lemma shows how this argument can be made formal.
\begin{lemma}[Location errors and insensitivity]\label{lem:loc err to event insens}
    Suppose $\texttt{RD}_n$ is a random digraph model and $\texttt{RG}_n$ a random graph model that can be coupled such that $\{\Xi_n^{(1)} \leq r_n \} \cap \{\Xi_n^{(2)} = 0\}$ occurs with probability tending to one. Moreover, suppose $\mathcal{Q}_n$ is a graph event such that one of the following holds:
    \begin{enumerate}
        \item $\mathcal{U}^{-1}(\mathcal{Q}_n)$ is insensitive for increase at rate $(r_n, 0)$ on $\texttt{RD}_n$ with probability tending to one.
        \item $\mathcal{Q}_n$ is insensitive for decrease on $\texttt{RG}_n$ with rate $(r_n, 0)$ high probability.
    \end{enumerate} 
    Then,
    \[
    \prob(\texttt{RG}_n \in \mathcal{Q}_n) = \prob(\texttt{RD}_n \in \mathcal{U}^{-1}(\mathcal{Q}_n)) + o(1).
    \]
\end{lemma}

We will now link the two general classes of random graph models to each other through couplings. First, we will link $\texttt{IAG}_n(V_n, \pi_n)$ to $\texttt{IEG}_n(V_n, \pi_n^\Box)$, where
\[
\pi_n^\Box(v, w) = 1 - (1 - \pi_n(v, w))(1 - \pi_n(w, v)) = \pi_n(v, w) + \pi_n(w, v) - \pi_n(v, w) \pi_n(w, v).
\]
It turns out that this is an exact coupling. However, the final product $\pi_n(v, w) \pi_n(w, v)$ is usually of lower order than the other terms. Therefore, it is more natural to couple $\IAG_n(V_n, \pi_n)$ to $\IEG_n(V_n, \pi_n^\circ)$ where $\pi_n^\circ = \pi_n(v, w) + \pi_n(w, v)$. This yields a coupling error. Note that $\Xi_n^{(1)} > 0$ and $\Xi_n^{(2)} = 0$, because $\pi_n^\Box \leq \pi_n^\circ$.

\begin{lemma}[Couple IAG to IEG]\label{lem:IAG to IEG approx}
   Let $\omega(n)$ be an arbitrary function such that $\omega(n) \to \infty$. There exists a coupling between $\texttt{IAG}_n(V_n, \pi_n)$ and $\texttt{IEG}_n(V_n, \pi_n^\circ)$ with $\pi_n^\circ(v, w) = \pi_n(v, w) + \pi_n(w, v)$ such that $\Xi_n^{(1)} \leq \max\{1, \omega(n)(\pi_n^\uparrow n)^2\}$ and $\Xi_n^{(2)} = 0$ with probability tending to one.
\end{lemma}

\begin{remark}
    The maximum with $1$ in the formulation of Lemma~\ref{lem:IAG to IEG approx} is needed in the boundary case where $n \pi_n^\uparrow \to 0$ in probability as $n \to \infty$. Also, note that insensitivity at rate $\omega(n) (\pi_n^\uparrow n)^2$ is too strict. Heuristically, this assumes that miss-couplings occur at every location of the graph with the largest possible probability, since for all $v, w \in V_n$ we have $\pi_n(v, w) \pi_n(w, v) \leq (\pi_n^\uparrow)^2$. Thus in practice it could be possible to work with a less restrictive insensitivity rate.
\end{remark}

The second lemma will couple ASRG to ESRG. Here, we can only resort to an indirect coupling. The idea behind the coupling is that arcs and edges are placed at the same locations; the direction of the arcs does not matter. This coupling would make errors when two arcs are placed between the same vertices (in opposite directions). Then, there is no matching location for the corresponding edge in ESRG. When we have placed $m_n - 1$ arcs already, then this coupling error occurs with a probability trivially bounded by $\nu_n^\uparrow m_n$. Because we are placing $m_n$ arcs in total, the amount of miss-couplings made is bounded by $\nu_n^\uparrow m_n^2$. This is how $\Xi_n^{(1)}$ can be upper-bounded. Note that $\Xi_n^{(1)} > 0$ and $\Xi_n^{(2)} = 0$ for this coupling because a directed graph has more locations to place arcs (i.e., $n(n-1)$) when compared to undirected graphs (i.e., $n(n-1)/2$). We will now formulate the second lemma and subsequently prove Theorem~\ref{thm:direct implies undirect}.

\begin{lemma}[Couple ASRG to ESRG]\label{lem:ASRG to ESRG}
    Let $\omega(n)$ be an arbitrary function such that $\omega(n) \to \infty$, and consider $\ASRG_n(V_n, \nu_n, m_n)$ and suppose that $m_n \nu^\uparrow_n \to 0$ in probability. There exists a coupling between $\ASRG_n(V_n, \nu_n, m_n)$ and $\ESRG_n(V_n, \nu_n^\circ, m)$ with $\nu_n^\circ(v, w) = \nu_n(v,w) + \nu_n(w,v)$ such that $\Xi_n^{(1)} \leq \omega(n) m^2_n \nu_n^\uparrow $ and $\Xi_n^{(2)} = 0$ with probability tending to one.
\end{lemma}
\begin{remark}
    In case of the classical Erd\H{o}s-R\'enyi model where $\Theta(n)$ edges/arcs are placed, the conditions of Lemma~\ref{lem:ASRG to ESRG} would simplify to event sensitivity at any rate $(r_n, 0)$ with $r_n \to \infty$ as expected. This is because in such setting $m_n^2 = \Theta(n^2)$ and $\nu_n^\uparrow = \Theta(1/n^2)$.
\end{remark}
\begin{proof}[\textbf{Proof of Theorem~\ref{thm:direct implies undirect}}]
        We only prove the theorem assuming insensitivity for increase of $\mathcal{U}^{-1}(\mathcal{Q}_n)$ twice. The proof assuming insensitivity for decrease of $\mathcal{Q}_n$ once or twice is analogous (see remark after the proof). We  first use Lemma~\ref{lem:IAG to IEG approx} together with Lemma~\ref{lem:loc err to event insens} and the event insensitivity of $\mathcal{U}^{-1}(\mathcal{Q}_n)$ to conclude
    \[
    \prob(\IEG_n(V_n, \pi_n^\circ) \in \mathcal{Q}_n) = \prob(\IAG_n(V_n, \pi_n) \in \mathcal{U}^{-1}(\mathcal{Q}_n)) + o(1).
    \]
    Next, we invoke \eqref{eq:IAG ASRG equivalence} to translate from $\IAG_n(V_n, \pi_n)$ to $\ASRG_n(V_n, \nu_n, m_n)$. We find
    \begin{equation}\label{eq:dir to undir equiv almost there}
     \prob(\IEG_n(V_n, \pi_n^\circ) \in \mathcal{Q}_n) = \prob(\ASRG_n(V_n, \nu_n, m_n) \in \mathcal{U}^{-1}(\mathcal{Q}_n)) + o(1).
    \end{equation}
    Finally, we use Lemma~\ref{lem:ASRG to ESRG} with Lemma~\ref{lem:loc err to event insens} and the event insensitivity of $\mathcal{U}^{-1}(\mathcal{Q}_n)$ to conclude
    \[
    \prob(\ASRG_n(V_n, \nu_n, m_n) \in \mathcal{U}^{-1}(\mathcal{Q}_n)) = \prob(\texttt{ESRG}_n(V_n, \nu_n^\circ, m_n) \in \mathcal{Q}_n) + o(1),
    \]
    which gives the desired result when substituted into \eqref{eq:dir to undir equiv almost there}. Namely,
    \[
    \prob(\IEG_n(V_n, \pi_n^\circ) \in \mathcal{Q}_n) = \prob(\ESRG_n(V_n, \nu_n^\circ, m_n) \in \mathcal{Q}_n) + o(1).
    \]
\end{proof}
\begin{remark}
    The different combinations of insensitivity requirements all have the same goal: bridging the black arrows in Figure~\ref{fig:proof dir to undir idea}. No matter whether we invoke insensitivity for increase for $\mathcal{U}^{-1}(\mathcal{Q}_n)$ or insensitivity for decrease for $\mathcal{Q}_n$, we can still apply Lemma~\ref{lem:loc err to event insens}. 
\end{remark}

With Theorem~\ref{thm:direct implies undirect} at our disposal, we now want to use Theorem~\ref{thm:CCI to IRD} to prove Corollary~\ref{cor:CCI to IRG}. To do this, we need three additional lemmas with the following purposes:
\begin{enumerate}
    \item Theorem~\ref{thm:CCI to IRD} can only be applied when the events $\mathcal{Q}_n$ are monotone. Since we will be considering events of the type $\mathcal{U}^{-1}(\mathcal{Q}_n)$ in the directed setting, we need to show that these are monotone when the input event $\mathcal{Q}_n$ is monotone.
    \item We need to understand the behaviour of kernels satisfying \eqref{eq:kernel bound undirect}, since this behaviour will dictate part of the event insensitivity required in Theorem~\ref{thm:direct implies undirect} through $\pi_n^\uparrow$.
    \item We need to understand the mass function in Example~\ref{ex:ESRG and ASRG}\ref{ex:RaCInG}, since it will dictate the other part of the event insensitivity required in Theorem~\ref{thm:direct implies undirect} through $\nu_n^\uparrow$.
\end{enumerate}
We will now formulate the three lemmas that tackle the above purposes. Thereafter, we will prove Corollary~\ref{cor:CCI to IRG}.
\begin{lemma}[Monotonicity]\label{lem:monotonicity preservation}
    If $\mathcal{Q}_n$ is a monotone event, then $\mathcal{U}^{-1}(\mathcal{Q}_n)$ is too.
\end{lemma}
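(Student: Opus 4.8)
### Proof Proposal for Lemma~\ref{lem:monotonicity preservation}

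\textbf{Overall approach.} The statement to prove is that if $\mathcal{Q}_n$ is monotone (increasing or decreasing as a collection of undirected graphs, per Definition~\ref{def:monotone events}), then its preimage $\mathcal{U}^{-1}(\mathcal{Q}_n)$ under the forgetful map is monotone as a collection of digraphs. The plan is to handle the increasing and decreasing cases by the same argument up to reversing inclusions, so I would prove the increasing case in full and then note the decreasing case is symmetric. The only fact I really need is a compatibility between the subgraph order on digraphs and the subgraph order on graphs under $\mathcal{U}$: namely, that $\vec{G}_1 \subseteq \vec{G}_2$ implies $\mathcal{U}(\vec{G}_1) \subseteq \mathcal{U}(\vec{G}_2)$. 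This is immediate from Definition~\ref{def: forgetful map}, since if every arc of $\vec{G}_1$ is an arc of $\vec{G}_2$, then every unordered pair carrying an arc in $\vec{G}_1$ carries one in $\vec{G}_2$, so $\mathcal{U}(\vec{E}_1) \subseteq \mathcal{U}(\vec{E}_2)$ (and the vertex sets agree).

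\textbf{Key steps, in order.} First, I would state and verify the monotonicity of $\mathcal{U}$: for digraphs $\vec{G}_1 \subseteq \vec{G}_2$ on the same vertex set, $\mathcal{U}(\vec{G}_1) \subseteq \mathcal{U}(\vec{G}_2)$, using the explicit description $\mathcal{U}(\vec{E}) = \{\{v,w\} : (v,w)\in\vec{E} \text{ or } (w,v)\in\vec{E}\}$. Second, assume $\mathcal{Q}_n$ is increasing and take two digraphs $\vec{G}_1 \subseteq \vec{G}_2$ with $\vec{G}_1 \in \mathcal{U}^{-1}(\mathcal{Q}_n)$; this means $\mathcal{U}(\vec{G}_1) \in \mathcal{Q}_n$. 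By step one, $\mathcal{U}(\vec{G}_1) \subseteq \mathcal{U}(\vec{G}_2)$, and since $\mathcal{Q}_n$ is increasing, $\mathcal{U}(\vec{G}_2) \in \mathcal{Q}_n$, i.e. $\vec{G}_2 \in \mathcal{U}^{-1}(\mathcal{Q}_n)$. Hence $\mathcal{U}^{-1}(\mathcal{Q}_n)$ is increasing. Third, if $\mathcal{Q}_n$ is decreasing, run the same argument with the roles of $\vec{G}_1$ and $\vec{G}_2$ swapped: from $\vec{G}_2 \in \mathcal{U}^{-1}(\mathcal{Q}_n)$ and $\mathcal{U}(\vec{G}_1) \subseteq \mathcal{U}(\vec{G}_2)$ we get $\mathcal{U}(\vec{G}_1) \in \mathcal{Q}_n$, so $\vec{G}_1 \in \mathcal{U}^{-1}(\mathcal{Q}_n)$. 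Thus $\mathcal{U}^{-1}(\mathcal{Q}_n)$ is monotone in both cases.

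\textbf{Expected obstacle.} Frankly, there is no serious obstacle here — the lemma is essentially a one-line order-theoretic observation once the right compatibility statement about $\mathcal{U}$ is isolated. The only thing to be slightly careful about is that ``$\subseteq$'' is being used in two different posets (digraphs vs. graphs) and that the subgraph order here is on the \emph{same} vertex set (both models have $n$ vertices), so no vertex-relabelling subtleties arise. I would make sure Definition~\ref{def:monotone events} is being applied verbatim, and that the preimage is taken within $\vec{\mathbb{G}}_n$, so that comparisons $\vec{G}_1 \subseteq \vec{G}_2$ are between digraphs on the common vertex set $[n]$.
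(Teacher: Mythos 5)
Your proposal is correct and follows essentially the same route as the paper's proof: establish that $\vec{G}_1 \subseteq \vec{G}_2$ implies $\mathcal{U}(\vec{G}_1) \subseteq \mathcal{U}(\vec{G}_2)$, then transfer the monotonicity of $\mathcal{Q}_n$ through the preimage. The only cosmetic difference is that the paper treats the decreasing case by a ``without loss of generality'' remark, whereas you spell it out; the substance is identical.
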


\begin{lemma}[Kernel bound]\label{lem:kernel bound}
    Under Assumption~\ref{ass:finite support}, any kernel $\kappa_n$ satisfying \eqref{eq:kernel bound undirect} is bounded. 
\end{lemma}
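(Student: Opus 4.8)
The plan is to show that the target kernel $\kappa$ defined in Corollary~\ref{cor:CCI to IRG} is bounded, and then observe that the perturbation allowed by \eqref{eq:kernel bound} is uniformly small, so $\kappa_n$ inherits boundedness. First I would unpack the definition
\[
\kappa(t, s) = \alpha \sum_{i=1}^l \sum_{j=1}^r \frac{p_{ij}\, I(t,i)\, J(s,j)}{\lambda_i\, \varrho_j}.
\]
Here $\alpha > 0$ is a fixed constant, the sums are finite (over $i \in [l]$ and $j \in [r]$), each $p_{ij} \in [0,1]$, and $I(t,i), J(s,j) \in \{0,1\}$ by definition of the functions $I, J$ in Example~\ref{ex:CCI}. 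The only quantities that could cause unboundedness are the denominators $\lambda_i \varrho_j$, but Assumption~\ref{ass:CCI} guarantees $\lambda_i > 0$ for all $i \in [l]$ and $\varrho_j > 0$ for all $j \in [r]$. Since there are only finitely many such constants, we may set $c_0 := \min_{i \in [l]} \lambda_i \cdot \min_{j \in [r]} \varrho_j > 0$, and then
\[
\kappa(t,s) \leq \alpha \sum_{i=1}^l \sum_{j=1}^r \frac{1}{\lambda_i \varrho_j} \leq \frac{\alpha\, l\, r}{c_0} =: M_0 < \infty,
\]
uniformly in $t, s$. Thus $\kappa$ is bounded by a constant independent of $t, s$ (and of $n$).

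Next I would handle a general $\kappa_n$ satisfying \eqref{eq:kernel bound}, i.e. $|\kappa_n(t,s) - \kappa(t,s)| \leq c\, n^{-1/10}$ for all $t, s$. By the triangle inequality, $\kappa_n(t,s) \leq \kappa(t,s) + c\, n^{-1/10} \leq M_0 + c\, n^{-1/10}$. Since $n^{-1/10} \leq 1$ for all $n \geq 1$, this gives $\kappa_n(t,s) \leq M_0 + c$ uniformly in $t, s$ and in $n$, so $\kappa_n$ is bounded by the fixed constant $M_0 + c$. This is the claimed conclusion.

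I do not expect any genuine obstacle here: the lemma is essentially a bookkeeping statement that the finiteness of the type space $[\tau]$ and chirality space $[l] \times [r]$, together with the strict positivity in Assumption~\ref{ass:CCI}, forces a uniform bound, which then survives the $O(n^{-1/10})$ perturbation. The only point that warrants a sentence of care is making explicit that the bound is \emph{uniform} in $t$ and $s$ (not merely finite for each fixed pair), since that uniformity is what later feeds into controlling $\mu_n^\uparrow$-type quantities and hence the event-insensitivity rate in \eqref{eq:needed event insens}; this is immediate because the bound $M_0 + c$ contains no dependence on $t, s$. If one wanted the statement for countably infinite support of $T$ and $C$ (as mentioned in the remark after Corollary~\ref{cor:CCI to IRG}), one would instead need summability hypotheses on $(p_{ij})$ and lower bounds on the $\lambda_i, \varrho_j$, but under Assumption~\ref{ass:CCI} the finite case is all that is needed.
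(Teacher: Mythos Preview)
Your proof is correct and follows essentially the same approach as the paper: bound the target kernel $\kappa$ by replacing each $\lambda_i\varrho_j$ with the minimum $\min_i\lambda_i\cdot\min_j\varrho_j>0$ (guaranteed positive by Assumption~\ref{ass:CCI}), then use the $O(n^{-1/10})$ perturbation from \eqref{eq:kernel bound} and the triangle inequality to transfer the bound to $\kappa_n$. The paper's version uses $\sum_{i,j} p_{ij}\,I\,J\le 1$ where you use the cruder $\sum_{i,j}1=lr$, but this makes no difference for the stated conclusion.
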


\begin{lemma}[Mass bound]\label{lem:mass bound}
    For the mass function $\nu_n$ in Example~\ref{ex:ESRG and ASRG}\ref{ex:RaCInG} we have under Assumption~\ref{ass:finite support} for some fixed $c > 0$ that $\nu_n \leq c/n^2$ with probability tending to one conditioned on $V_n$.
\end{lemma}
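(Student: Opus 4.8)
The plan is to bound $\mu_n((v,T_v),(w,T_w))$ uniformly over all pairs by controlling the denominators $L_i R_j$ from below with high probability. Recall from Example~\ref{ex:CCI} that
\[
\mu_n((v, T_v), (w, T_w)) = \sum_{i \in \mathcal{C}^{\text{out}}} \sum_{j \in  \mathcal{C}^{\text{in}}} \frac{p_{ij} I(T_v, i) J(T_w, j)}{L_i R_j},
\]
and under Assumption~\ref{ass:CCI} the sums run over the finite index sets $[l]$ and $[r]$. Since $I$, $J$ and $p_{ij}$ are all bounded by $1$, the numerator of each term is at most $1$, so it suffices to show that $\min_{i \in [l]} L_i$ and $\min_{j \in [r]} R_j$ are both $\Theta(n)$ with high probability; then each of the finitely many terms is $O(1/n^2)$ and the whole (finite) sum is $O(1/n^2)$ as claimed.

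Next I would analyse $L_i = \sum_{k=1}^\tau N_k\, I(k,i)$, where $N_k := |\{v : T_v = k\}|$ is the number of vertices of type $k$. Since the $T_v$ are i.i.d.\ with $\prob(T=k)=q_k$, each $N_k$ is Binomial$(n, q_k)$, so $\expec[L_i] = n\sum_k q_k I(k,i) = n\lambda_i$, and Assumption~\ref{ass:CCI} guarantees $\lambda_i > 0$ for every $i \in [l]$. A standard Chernoff / concentration bound for binomials (or for the linear combination $L_i$, which is itself a sum of $n$ i.i.d.\ bounded terms $\sum_k \1\{T_v = k\} I(k,i)$) gives $\prob(L_i < \tfrac12 n\lambda_i) \le e^{-c_i n}$ for some $c_i > 0$. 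Taking a union bound over the finitely many $i \in [l]$ and $j \in [r]$ (the argument for $R_j$ is identical using $\varrho_j > 0$) shows that with high probability $L_i \ge \tfrac12 n\lambda_i$ and $R_j \ge \tfrac12 n\varrho_j$ simultaneously for all $i,j$. On this high-probability event every term of $\mu_n$ is at most $\frac{1}{(n\lambda_i/2)(n\varrho_j/2)} = \frac{4}{n^2 \lambda_i \varrho_j}$, so
\[
\mu_n((v,T_v),(w,T_w)) \;\le\; \sum_{i=1}^l \sum_{j=1}^r \frac{4}{n^2 \lambda_i \varrho_j} \;=:\; \frac{c}{n^2},
\]
with $c = 4\big(\sum_i \lambda_i^{-1}\big)\big(\sum_j \varrho_j^{-1}\big)$ a fixed finite constant, and this bound holds uniformly over all $v,w \in V_n$, hence for $\mu_n^\uparrow$ as well.

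The only mild subtlety — not really an obstacle — is making sure the high-probability event is the \emph{same} event for all vertex pairs: this is handled because the bound on $\mu_n$ depends on $V_n$ only through the type-counts $N_k$, and controlling those finitely many counts simultaneously (a single union bound over $k \in [\tau]$, or over $i \in [l]$ and $j \in [r]$) suffices. I expect the concentration step to be entirely routine; the main point to state carefully is that finiteness of $\mathcal{S} = [\tau]$ and of $[l]\times[r]$, together with strict positivity of the $\lambda_i$ and $\varrho_j$ from Assumption~\ref{ass:CCI}, is exactly what lets the union bound and the final finite sum go through with a deterministic constant $c$.
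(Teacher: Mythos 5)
Your proposal is correct and follows essentially the same route as the paper: lower-bound all the denominators $L_i, R_j$ by $\Theta(n)$ simultaneously with high probability via a concentration bound plus a union bound over the finite index sets, then bound the finite sum by a deterministic constant times $n^{-2}$. The only cosmetic differences are that you concentrate each $L_i$ (a $\texttt{Bin}(n,\lambda_i)$ variable) around $n\lambda_i$ with a Chernoff bound, whereas the paper dominates every $L_i, R_j$ from below by a single $\texttt{Bin}(n, q^\downarrow)$ with $q^\downarrow = \min_t \prob(T=t)$ and uses Chebyshev, which changes nothing substantive.
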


Note that Lemma~\ref{lem:kernel bound} ensures that $\pi_n^\uparrow \leq c^\prime / n$ for some large enough constant $c^\prime > 0$. Hence, the required insensitivity through Lemma~\ref{lem:IAG to IEG approx} simplifies to rate $(\omega(n), 0)$ for any $\omega(n) \to \infty$. Similarly, Lemma~\ref{lem:mass bound} shows that $\nu_n^\uparrow \leq c / n^2$ for large enough $c > 0$. Thus, since we place $m_n \approx \mu n$ arcs, the required insensitivity due to Lemma~\ref{lem:ASRG to ESRG} only has constant rate. All in all, this show that the total event insensitivity needed has rate $\omega(n)$ for any $\omega(n) \to \infty$. This is why said assumption appears in Corollary~\ref{cor:CCI to IRG}, but not in Theorem~\ref{thm:CCI to IRD}.

\begin{proof}[\textbf{Proof of Corollary~\ref{cor:CCI to IRG}}]
    First, we recall that $\IRG_n(T, \kappa_n)$ for all $\kappa_n$ satisfying \eqref{eq:kernel bound undirect} is a special instance of $\IAG_n(V_n, \pi_n)$ (see Example~\ref{ex:IEG and IAG}\ref{ex:IRD}). In this setting, we can use Lemma~\ref{lem:kernel bound} to conclude that the event $\mathcal{A}_n := \{\pi_n^\uparrow \leq c_{\kappa} / n\}$ occurs with probability tending to one for some constant $c_{\kappa} > 0$. Similarly, we recall that RaCInG is a special instance of $\ASRG_n(V_n, \nu_n, m_n)$ (see Example~\ref{ex:ESRG and ASRG}\ref{ex:RaCInG}). In this setting, the event $\mathcal{B}_n := \{ \nu_n^\uparrow \leq c_{\nu} / n^2\}$ occurs with probability tending to one for some $c_{\nu} > 0$ through Lemma~\ref{lem:mass bound}.
    
    Using the events $\mathcal{A}_n$ and $\mathcal{B}_n$, we now calculate the rate $r_n$ in \eqref{eq:Translation rate dir undir} required to apply Theorem~\ref{thm:direct implies undirect}. Since we know that $\mathcal{A}_n \cap \mathcal{B}_n$ occurs with probability tending to one, the bounds on $\nu_n^\uparrow$ and $\pi_n^\uparrow$ imply that the required insensitivity rate to apply the theorem equals
    \[
    r_n = \max\left\{ \omega'(n)(n c_\kappa / n)^2, \; \omega'(n) \mu^2 n^2 c_\nu / n^2 , \; 1 \right\},
     \]
    where $\omega'(n) \to \infty$ is an arbitrarily slowly increasing function (different from $\omega(n)$ in the statement of the corollary). In particular, note that $r_n = \Theta( \omega'(n))$. Therefore, we can pick this function $\omega'(n)$ such that $r_n = \omega(n)$. We can now proceed to apply Theorem~\ref{thm:direct implies undirect}.
    
    Because $\mathcal{U}^{-1}(\mathcal{Q}_n)$ is insensitive for increase at rate $\omega(n)$ on $\texttt{IRD}_{n}(T,\kappa_n)$ for all kernels $\kappa_n$ satisfying \eqref{eq:kernel bound undirect}, Lemma~\ref{lem:IAG to IEG approx} together with Lemma~\ref{lem:loc err to event insens} shows us that
    \[
    \prob(\texttt{IRG}_n(T, \kappa_n^\circ) \in \mathcal{Q}_n) = \prob(\IRD_n(T, \kappa_n) \in \mathcal{U}^{-1}(\mathcal{Q}_n)) + o(1),
    \]
    Hence, since $\prob(\texttt{IRG}_n(T, \kappa_n^\circ) \in \mathcal{Q}_n) \to p$ for all $\kappa_n^\circ(t,s) = \kappa_n(t,s) + \kappa_n(s,t)$, we know that $\prob(\IRD_n(T, \kappa_n) \in \mathcal{U}^{-1}(\mathcal{Q}_n)) \to p$ for all $\kappa_n$ satisfying \eqref{eq:kernel bound undirect} as well. Moreover, given that $\mathcal{Q}_n$ is a monotone event, we know that $\mathcal{U}^{-1}(\mathcal{Q}_n)$ also is a monotone event too through Lemma~\ref{lem:monotonicity preservation}.  Thus, we can apply Theorem~\ref{thm:CCI to IRD} to conclude that $\prob(\texttt{RaCInG}_{n, \mu}(T, C, I, J) \in \mathcal{U}^{-1}(\mathcal{Q}_n)) \to p$. This shows that \eqref{eq:IAG ASRG equivalence} is satisfied.
    
    Now, since the previously constructed event $\mathcal{A}_n \cap \mathcal{B}_n$ happens with probability tending to one, we immediately also find that $m_n \nu_n^\uparrow \to 0$ in probability. Thus, since $\mathcal{U}^{-1}(\mathcal{Q}_n)$ is insensitive at rate $\omega(n)$ on $\texttt{RaCInG}_{n, \mu}(T, C, I, J)$, we can finally apply Theorem~\ref{thm:direct implies undirect}. Doing this yields the desired result:
    \[
    \prob(\ESRG_n(V_n, \nu_n^\circ, \lfloor \mu n \rfloor)  \in \mathcal{Q}_n) = \prob(\texttt{RaCInG}_{n, \mu}(T, C, I, J) \in \mathcal{U}^{-1}(\mathcal{Q}_n)) + o(1) \to p.
    \]
\end{proof}

\subsection{Main results of Section~\ref{sec:SpaCInG}}\label{sec: main result spacing}

The main idea behind the proof of Theorem~\ref{thm:SCCI to SIRD} to apply Theorem~\ref{thm:CCI to IRD} to $\texttt{RaCInG}_{n, \mu}(\hat{T}, \hat{C}, \hat{I}, \hat{J})$. The key obstacle is that SpaCIng, as in Definition~\ref{def:SpaCInG algorithm}, is not exactly the same as $\texttt{RaCInG}_{n, \mu}(\hat{T}, \hat{C}, \hat{I}, \hat{J})$. Hence, the proof will consist {of two parts:}
\begin{enumerate}[label = \textbf{Part \Roman*.}]
    \item We show that SpaCInG is asymptotically the same as RaCInG.
    \item We check the conditions from Theorem~\ref{thm:CCI to IRD} and apply it.
\end{enumerate}
For Part I we want to identify SpaCInG as an instance of RaCInG by identifying the location and type of a vertex in SpaCInG as the vertex type in RaCInG. This will not be exact, because the number of vertices with a location $x \in \mathcal{X}$ is \emph{fixed}, while in RaCInG it will be random. Hence, we need to show that the added randomness in RaCInG does not alter the probability of an event $\mathcal{Q}_n$ occurring. This is where the insensitivity assumption of Theorem~\ref{thm:SCCI to SIRD} comes into play.

To show that the SpaCInG model and RaCInG with the adapted vertex type distribution are roughly the same, we will use a coupling. In this coupling SpaCInG will be matched to instances of the RaCInG with $n \pm \log(n)\sqrt{n}$ vertices. In this coupling the graphs will match on a large subset of the vertices. The vertices/edges where they do not match, will be rewired and removed. Due to the event insensitivity, this process will not influence the limiting probability. See Figure~\ref{fig:coupl idea} for a visual depiction of the coupling. We will also make the coupling more formal in the following definition.

\begin{figure}
    \centering
    \includegraphics[width=0.99\linewidth]{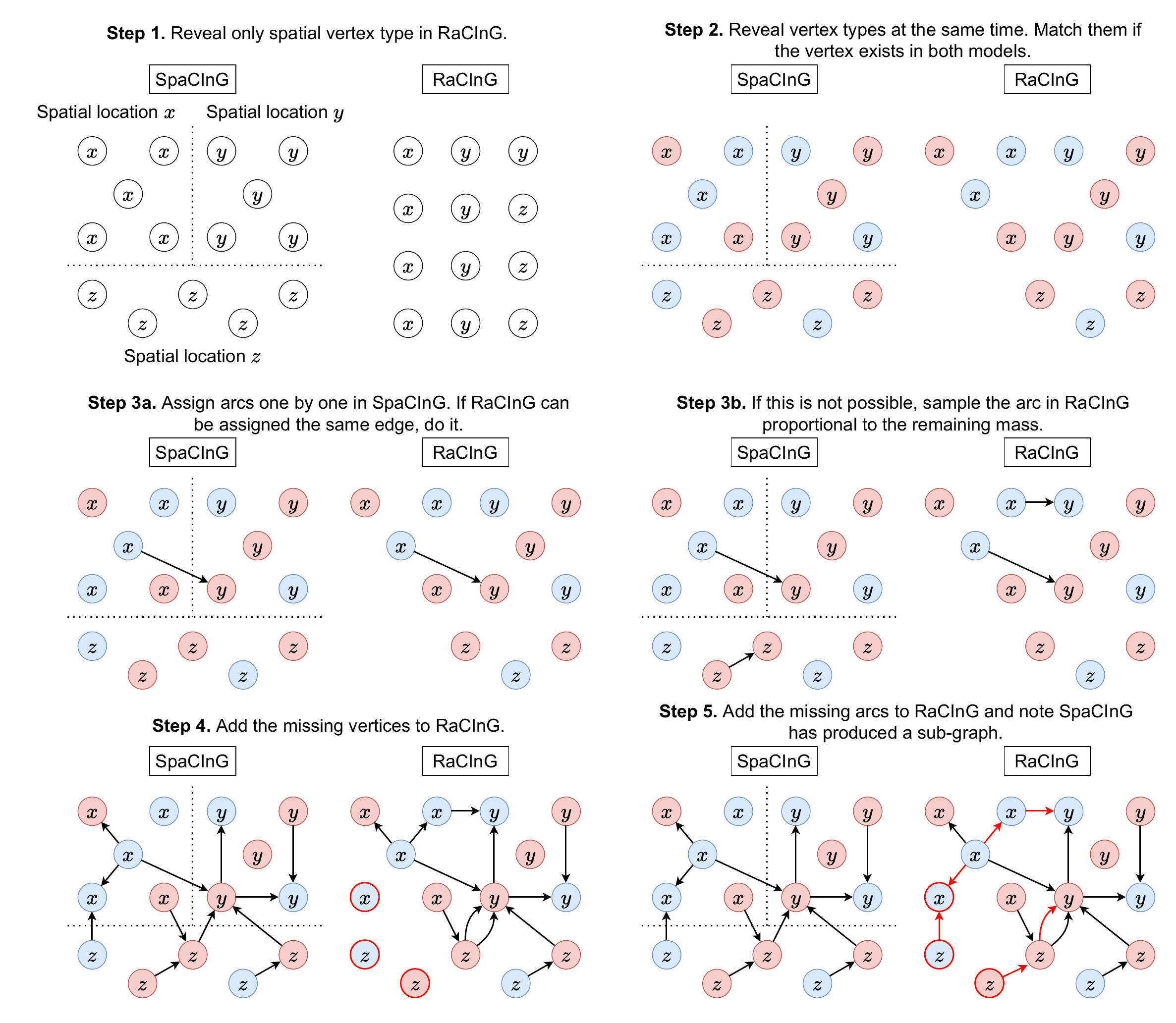}
    \caption{\emph{The visualisation of the coupling from Definition~\ref{def:coupling racing spacing}.}}
    \label{fig:coupl idea}
\end{figure}

\begin{notation}
    We will set $n^\pm :=n \pm \log(n) \sqrt{n}$.
\end{notation}

\begin{definition}[Coupling between RaCInG and SpaCInG]\label{def:coupling racing spacing}
    Let $\texttt{RaCInG}_{n^-, \mu}^\uparrow$ denote the instance of RaCInG from Definition~\ref{def:RaCInG SpaCInG} with at most $\log(n)\sqrt{n}$ vertices added after which at most $c\log(n) \sqrt{n}$ arcs get added for some $c > 0$ (cf. Definition~\ref{def: event insensitivity}). We will couple $\texttt{SpaCInG}_{n, \mu}$ to $\texttt{RaCInG}_{n^-, \mu}^\uparrow$ by generating instances of them through the following joint process:
    \begin{enumerate}[label = \arabic*.]
        \item Generate the spatial parts of the vertex-types in $\texttt{RaCInG}_{n^-, \mu}$. Thereafter, link each vertex in $\texttt{RaCInG}_{n^-, \mu}$ to a vertex in $\texttt{SpaCInG}_{n,\mu}$ with the same spatial type. If this is not possible, then generate the models independently and end the coupling.
        \item Generate the non-spatial parts of the vertex-types in $\texttt{RaCInG}_{n^-, \mu}$. If two vertices in $\texttt{SpaCInG}_{n,\mu}$ and $\texttt{RaCInG}_{n^-, \mu}$ are linked, give them the same vertex-type. If no linked vertex exists for a vertex in $\texttt{SpaCInG}_{n,\mu}$, give it an independent vertex-type.
        \item Assign arcs to vertices one by one, giving them the same colours in both models. Always start with each arc in $\texttt{SpaCInG}_{n,\mu}$. After this step, the realisation of $\texttt{SpaCInG}_{n,\mu}$ is complete.
        \begin{enumerate}[label = \alph*.]
            \item If the arc is placed in between two vertices that are also present in $\texttt{RaCInG}_{n^-, \mu}$, then place the arc between the linked vertices.
            \item If one of the vertices is \emph{not} present in $\texttt{RaCInG}_{n^-, \mu}$, then generate its place independently from $\texttt{SpaCInG}_{n,\mu}$ with probability proportional to the difference between the arc probability in $\texttt{RaCInG}_{n^-, \mu}$ and $\texttt{SpaCInG}_{n,\mu}$.
        \end{enumerate}
         \item For each vertex in $\texttt{SpaCInG}_{n,\mu}$ without a linked vertex in $\texttt{RaCInG}_{n^-, \mu}$, add a vertex to $\texttt{RaCInG}_{n^-, \mu}$ and give it the same location and type as the vertex in $\texttt{SpaCInG}_{n,\mu}$.
        \item Add all arcs incident to the new vertices in $\texttt{RaCInG}_{n^-, \mu}$, such that each arc in $\texttt{SpaCInG}_{n,\mu}$ is also present in $\texttt{RaCInG}_{n^-, \mu}$. This is the output of $\texttt{RaCInG}_{n^-, \mu}^\uparrow$
    \end{enumerate}
\end{definition}

\begin{remark}
    Note that $\texttt{SpaCInG}_{n, \mu}$ is a sub-graph of $\texttt{RaCInG}_{n^-, \mu}^\uparrow$. By noting that the analysed event $\mathcal{Q}_n$ is increasing, this will show that probabilities involving $\texttt{SpaCInG}_{n, \mu}$ are a lower-bound of the same probabilities in $\texttt{RaCInG}_{n^-, \mu}^\uparrow$. Of course, we also need an upper-bound. This will be provided through an analogous coupling similar to the one in Definition~\ref{def:coupling racing spacing} where $\texttt{RaCInG}_{n^+, \mu}^\downarrow$ becomes the sub-graph of $\texttt{SpaCInG}_{n, \mu}$. We will not give this coupling, because its only differences compared to Definition~\ref{def:coupling racing spacing} is a role-reversal of RaCInG and SpaCInG, and a change in the number of vertices from $n^-$ to $n^+$ in RaCInG.
\end{remark}

After Part I has been executed, we know that SpaCInG is roughly the same as RaCInG. The final thing that we then still need to do in Part II is apply Theorem~\ref{thm:CCI to IRD} to the instances of RaCInG and the IRDs. At this point we will also need the fact that $\mathcal{Q}_n$ is increasing, and the convergence range specified in \eqref{eq:kernel range SpaCInG}.

We now make the heuristics formal. To execute Part I of the proof we will analyse the coupling in Definition~\ref{def:coupling racing spacing}. Specifically, we need to show two things. Firstly, we need to show that the procedure in Definition~\ref{def:coupling racing spacing} is a coupling, and that moreover the outputted $\texttt{SpaCInG}_{n, \mu}$ network is a sub-graph of $\texttt{RaCInG}_{n^-, \mu}^\uparrow$. This will allow us to translate from SpaCInG to RaCInG and exploit monotonicity of $\mathcal{Q}_n$ in the proof of Theorem~\ref{thm:SCCI to SIRD}. Secondly, we need to show that the amount of extra vertices and arcs in $\texttt{RaCInG}_{n^-, \mu}^\uparrow$ is bounded with probability tending to one by $c \log(n) \sqrt{n}$ for $c > 0$ large enough, so that we can use the assumed insensitivity in the proof of Theorem~\ref{thm:SCCI to SIRD}. These two facts are captured by the following lemmas, after which we prove the main result.

\begin{lemma}[Subset coupling]\label{lem:correct coupling}
    The procedure in Definition~\ref{def:coupling racing spacing} is a coupling of $\texttt{SpaCInG}_{n, \mu}$ and $\CCI_{n^-, \mu}^\uparrow$ such that
    \[
    \prob(\texttt{SpaCInG}_{n, \mu} \subseteq \CCI_{n^-, \mu}^\uparrow) \to 1.
    \]
\end{lemma}
\begin{remark}
    The alternative analogous coupling between $\texttt{SpaCInG}_{n, \mu}$ and $\CCI_{n^+, \mu}^\downarrow$ outlined in the remark after Definition~\ref{def:coupling racing spacing} is also a coupling for the same reasons as outlined in the proof of Lemma~\ref{lem:correct coupling}. For this coupling we have that
    \[
    \prob(\CCI_{n^+, \mu}^\downarrow \subseteq \texttt{SpaCInG}_{n, \mu}) \to 1.
    \]
\end{remark}

\begin{lemma}[Extra arcs]\label{lem:extra arcs}
    Consider $\texttt{SpaCInG}_{n, \mu}$ and $\CCI_{n^-, \mu}^\uparrow$ under the coupling in Definition~\ref{def:coupling racing spacing}. Denote by $A_n$ the number of arcs in $\CCI_{n^-, \mu}^\uparrow$ that are not in $\texttt{SpaCInG}_{n, \mu}$. We have for some $c > 0$ large enough that $\prob(A_n \leq c \log(n) \sqrt{n}) \to 1$.
\end{lemma}
\begin{remark}
    Between the coupled instances of $\texttt{SpaCInG}_{n, \mu}$ and $\CCI_{n^+, \mu}^\downarrow$ we also have that $\prob(A_n \leq c \log(n) \sqrt{n}) \to 1$. 
\end{remark}

\begin{proof}[\textbf{Proof of Theorem~\ref{thm:SCCI to SIRD}}] 
We assume without loss of generality that $\mathcal{Q}_n$ is increasing. We start by computing an upper bound on the desired probability in SpaCInG and show that it converges to $p$. First, by Lemma~\ref{lem:correct coupling} we note that with probability tending to one $\texttt{SpaCInG}_{n, \mu} \subseteq \CCI_{n^-, \mu}^\uparrow$. Thus, because $\mathcal{Q}_n$ is increasing, we have that $\texttt{SpaCInG}_{n, \mu} \in \mathcal{Q}_n$ implies that $\CCI_{n^-, \mu}^\uparrow \in \mathcal{Q}_n$. Therefore, we conclude
\begin{equation}\label{eq: spacing convergence proof upper bound}
    \prob(\texttt{SpaCInG}_{n, \mu} \in \mathcal{Q}_n) \leq \prob(\CCI_{n^-, \mu}^\uparrow \in \mathcal{Q}_n) + o(1).
\end{equation}
Next, we note that $\CCI_{n^-, \mu}^\uparrow$ has $\log(n)\sqrt{n}$ more vertices than $\CCI_{n^-, \mu}$. Moreover, we note that each arc that does not match between $\texttt{SpaCInG}_{n, \mu}$ and $\CCI_{n^-, \mu}^\uparrow$ under Definition~\ref{def:coupling racing spacing} corresponds to a two arc difference between $\CCI_{n^-, \mu}^\uparrow$ and $\CCI_{n^-, \mu}$ (because of Step 3b and 5 in Definition~\ref{def:coupling racing spacing}). 

Thus, using Lemma~\ref{lem:extra arcs}, we know (with probability tending to one) that we need to delete at most $\log(n)\sqrt{n}$ vertices and $2 c' \log(n)\sqrt{n}$ arcs to transform $\CCI_{n^-, \mu}^\uparrow$ into $\CCI_{n^-, \mu}$ for some $c' > 0$. Because $\mathcal{Q}_n$ is assumed to be insensitive at rate $(c \log(n) \sqrt{n}, \log(n)\sqrt{n})$ for $c > 0$ large enough in $\CCI_{n^-, \mu}$, transforming $\CCI_{n^-, \mu}^\uparrow$ into $\CCI_{n^-, \mu}$ does not alter the limiting probability. Using this in \eqref{eq: spacing convergence proof upper bound} yields
\begin{equation}\label{eq: spacing convergence proof final upper obund}
    \prob(\texttt{SpaCInG}_{n, \mu} \in \mathcal{Q}_n) \leq \prob(\CCI_{n^-, \mu} \in \mathcal{Q}_n) + o(1).
\end{equation}
Next, we need to show that $\prob(\CCI_{n^-, \mu} \in \mathcal{Q}_n) \to p$. To do this, we note for all kernels satisfying \eqref{eq:kernel range SpaCInG} that we assumed
\[
\prob(\IRD_{n^-}((X, T), \kappa_{n^-}') \in \mathcal{Q}_n) \to p.
\]
for all kernels $\kappa_n'$ satisfying \eqref{eq:kern sequence CCI} in the RaCInG setting of Definition~\ref{def:RaCInG SpaCInG}. Thus, since $\mathcal{Q}_n$ is increasing we may conclude from Theorem~\ref{thm:CCI to IRD} that $\prob(\CCI_{n^-, \mu} \in \mathcal{Q}_n) \to p$, implying from \eqref{eq: spacing convergence proof final upper obund} that
\begin{equation}\label{eq:limsup proof SpaCInG}
\limsup_{n\to\infty} \prob(\texttt{SpaCInG}_{n, \mu} \in \mathcal{Q}_n) \leq p.
\end{equation}

In an analogous way we can also investigate the lower bound. Using a coupling between $\texttt{SpaCInG}_{n, \mu}$ and $\CCI_{n^+, \mu}^\downarrow$ similar to Definition~\ref{def:coupling racing spacing} we find through the fact that $\mathcal{Q}_n$ is increasing and the remark underneath Lemma~\ref{lem:correct coupling} that
\begin{equation}\label{eq: spacing convergence proof lower bound}
\prob(\texttt{SpaCInG}_{n, \mu} \in \mathcal{Q}_n) \geq \prob(\CCI_{n^+, \mu}^\downarrow \in \mathcal{Q}_n) + o(1).
\end{equation}
Analysing the amount of mismatched arcs between $\texttt{SpaCInG}_{n, \mu}$ and $\CCI_{n^+, \mu}^\downarrow$ using the remark underneath Lemma~\ref{lem:extra arcs} and insensitivity of $\mathcal{Q}_n$ in $\CCI_{n^+, \mu}$ allows us to conclude that
\[
\prob(\texttt{SpaCInG}_{n, \mu} \in \mathcal{Q}_n) \geq \prob(\CCI_{n^+, \mu} \in \mathcal{Q}_n) + o(1).
\]
Finally, using monotonicity of $\mathcal{Q}_n$ and Theorem~\ref{thm:CCI to IRD} allows us to deduce
\[
\liminf_{n\to \infty}\prob(\texttt{SpaCInG}_{n, \mu} \in \mathcal{Q}_n) \geq p.
\]
Together with \eqref{eq:limsup proof SpaCInG} we indeed conclude that
\[
\lim_{n \to \infty}\prob(\texttt{SpaCInG}_{n, \mu} \in \mathcal{Q}_n)  = p.
\]
\end{proof}

\begin{remark}
    For decreasing events $\mathcal{Q}_n$ the proof is analogous. The only difference is that the inequalities \eqref{eq: spacing convergence proof upper bound} and \eqref{eq: spacing convergence proof lower bound} would flip.
\end{remark}

\section{Proofs of lemmas and propositions}\label{sec:lem proofs}

\subsection{Proofs of lemmas in Section~\ref{sec: main result dir to indir}}

We start by proving Lemma~\ref{lem:loc err to event insens}. Its proof showcases how insensitivity (see Definition~\ref{def: event insensitivity}) is used to turn a location coupling that is almost exact into an exact one.

\begin{proof}[\textbf{Proof of Lemma~\ref{lem:loc err to event insens}}]
    We will only prove the lemma assuming insensitivity of $\mathcal{U}^{-1}(\mathcal{Q}_n)$. The proof assuming insensitivity of $\mathcal{Q}_n$ is analogous. See the remark after the proof. We will consider $\texttt{RD}_n$ and $\texttt{RG}_n$ only under the coupled probability space. We denote by $\mathcal{A}_n$ the event that $\mathcal{U}^{-1}(\mathcal{Q}_n)$ is indeed insensitive for increase in $\texttt{RD}_n$ at rate $(r_n, 0)$, and define $\mathcal{B}_n := \{\Xi_n^{(1)} \leq r_n\} \cap \{\Xi_n^{(2)} = 0\}$. First, we use the union bound to obtain
    \begin{equation}\label{eq:RG up and low event bound}
    \begin{aligned}
        \prob(\{\texttt{RG}_n \in \mathcal{Q}_n\} \cap \mathcal{A}_n \cap \mathcal{B}_n) &\leq \prob(\texttt{RG}_n \in \mathcal{Q}_n),\\ &\leq \prob(\{\texttt{RG}_n \in \mathcal{Q}_n\} \cap \mathcal{A}_n \cap \mathcal{B}_n) + \prob(\neg \mathcal{A}_n) + \prob(\neg \mathcal{B}_n).
    \end{aligned}
    \end{equation}
    Using the fact that $\mathcal{A}_n$ and $\mathcal{B}_n$ occur with probability tending to one, we can now conclude that
    \begin{equation}\label{eq:union bound event insens prob}
           \prob(\texttt{RG}_n \in \mathcal{Q}_n) = \prob(\{\texttt{RG}_n \in \mathcal{Q}_n\} \cap \mathcal{A}_n \cap \mathcal{B}_n) + o(1). 
    \end{equation}
    We note that under the coupling $\mathcal{B}_n$ implies that $\texttt{RG}_n$ has at most $r_n$ edges more that $\mathcal{U}(\texttt{RD}_n)$, but that apart from that both graphs agree. We can define $\texttt{RD}_n^\uparrow$ from an output of $\texttt{RD}_n$ by adding the arcs (according to some process) such that $\mathcal{U}(\texttt{RD}_n^\uparrow) = \texttt{RG}_n$. Thus, we can conclude
    \[
    \prob(\texttt{RG}_n \in \mathcal{Q}_n) = \prob(\{\mathcal{U}(\texttt{RD}_n^\uparrow) \in \mathcal{Q}_n\} \cap \mathcal{A}_n \cap \mathcal{B}_n) + o(1). 
    \]
    Now, using the definition of $\mathcal{U}$ we can conclude that the above equality is the same as
    \[
    \prob(\texttt{RG}_n \in \mathcal{Q}_n) = \prob(\{\texttt{RD}_n^\uparrow \in \mathcal{U}^{-1}(\mathcal{Q}_n)\} \cap \mathcal{A}_n \cap \mathcal{B}_n) + o(1). 
    \]
    Next, using $\mathcal{A}_n$ we note that $\{\texttt{RD}_n^\uparrow \in \mathcal{U}^{-1}(\mathcal{Q}_n)\}$ and $\{\texttt{RD}_n \in \mathcal{U}^{-1}(\mathcal{Q}_n)\}$ have asymptomatically the same probability. Hence, we find
    \[
    \prob(\texttt{RG}_n \in \mathcal{Q}_n) = \prob(\{\texttt{RD}_n \in \mathcal{U}^{-1}(\mathcal{Q}_n)\} \cap \mathcal{A}_n \cap \mathcal{B}_n) + o(1). 
    \]
    Finally, we can remove the inclusion of $\mathcal{A}_n$ and $\mathcal{B}_n$ by using similar arguments as in \eqref{eq:RG up and low event bound}. This yields
    \[
    \prob(\texttt{RG}_n \in \mathcal{Q}_n) = \prob(\texttt{RD}_n \in \mathcal{U}^{-1}(\mathcal{Q}_n)) + o(1),
    \]
    which finishes the proof.
\end{proof}
\begin{remark}
    To prove Lemma~\ref{lem:loc err to event insens} assuming insensitivity of $\mathcal{Q}_n$ we would need to transform $\texttt{RG}_n$ into $\texttt{RG}_n^\downarrow$ (cf. Definition~\ref{def: event insensitivity}) instead of changing $\texttt{RD}_n$ into $\texttt{RD}_n^\uparrow$ every time. To make $\mathcal{U}(\texttt{RD}_n)$ and $\texttt{RG}_n$ agree, the heuristic idea is that we can either add arcs to $\texttt{RD}_n$ -- creating $\texttt{RD}_n^\uparrow$ -- or remove them from $\texttt{RG}_n$ -- creating $\texttt{RG}_n^\downarrow$.
\end{remark}

Since the remaining lemmas in Section~\ref{sec: main result dir to indir} deal with the four broad classes of models, and since all these models first generate a vector $V_n$ before they create a graph, we need to be clear where in the graph generation process we are. We need to introduce notation that specifies whether $V_n$ has already been generated or not.

\begin{notation}
  We will denote the probability measure for a model conditioned on $V_n$ by $\prob_n$. Similarly, we denote the conditional expectation of a model conditioned on $V_n$ by $\expec_n$. The unconditional operators remain $\prob$ and $\expec$.
\end{notation}

We first prove Lemma~\ref{lem:IAG to IEG approx}. We will do this by first giving an exact coupling between IAG and IEG. Thereafter, we check how close this is to the IEG formulation in Lemma~\ref{lem:IAG to IEG approx} to identify the required edge insensitivity. We will first give the lemma that will give us the exact connection between IAG and IEG, after which we will prove Lemma~\ref{lem:IAG to IEG approx}.

\begin{lemma}[Exact IAG to IEG]\label{lem:IAG to IEG exact}
    Consider $\IAG_n(V_n, \pi_n)$ with some vector $V_n$ and probability function $\pi_n$. Define a new probability function $\pi_n^\square$ as follows:
    \[
    \pi_n^\square(v, w) = 1 - (1 - \pi_n(v, w))(1 - \pi_n(w, v)). 
    \]
    We can couple $\mathcal{U}(\IAG_n(V_n, \pi_n))$ and $\IEG_n(V_n, \pi_n')$ such that the number of location errors (see Definition~\ref{def:loc coupl err}) is zero.
\end{lemma}
\begin{proof}
    We first note, because $V_n$ is the same in both models, we can consider $\IAG_n(V_n, \pi_n)$ and $\IEG_n(V_n, \pi^\square_n)$ to have the same vertex-sets. Now, given the vertices we can couple the graphs in $\IAG$ and $\IEG$. For this we define $(I_{vw})_{v, w \in V_n}$ for $v < w$ to be the sequence of edge indicators in $\IEG_n(V_n, \pi_n^\square)$. Similarly, we denote by $(J_{vw})_{v, w \in V_n}$ for $v \neq w$ the sequence of arc indicators in $\IAG_n(\pi_n; V_n)$.

    Note in both cases that the sequences $(I_{vw})_{v, w \in V_n}$ and $(J_{vw})_{v, w \in V_n}$ are independent, since vertices in $V_n$ have already been sampled. Moreover, every graph outputted by $\IEG_n(V_n, \pi_n^\square)$ and $\IAG_n(V_n, \pi_n)$ is captured fully by the sequences $(I_{vw})_{v, w \in V_n}$ and $(J_{vw})_{v, w \in V_n}$, respectively. Finally, note from Definition~\ref{def: forgetful map} that $\mathcal{U}(\IAG_n(\pi_n; V_n))$ will have an edge indicator sequence $(\widehat{I}_{vw})_{v, w \in V_n}$ given by
    \[
    \widehat{I}_{vw} = \1\{J_{vw} + J_{wv} \geq 1\}.
    \]
    All in all, if we prove an exact coupling between the sequences $(I_{vw})_{v, w \in V_n}$ and $(\widehat{I}_{vw})_{v, w \in V_n}$, then we have found the coupling between $\IEG_n(V_n, \pi_n^\square)$ and $\IAG_n(V_n, \pi_n)$ with zero location errors.
    
    To construct this coupling, we note that $(\widehat{I}_{vw})_{v, w \in V_n}$ is still an independent sequence, because the random vector sequence $(J_{vw}, J_{wv})_{v, w\in V_n}$ for $v < w$ is also independent. This is due to the independence in $(J_{vw})_{v \in V_n}$. Moreover, we note that $\widehat{I}_{vw} = 0$ if and only if both $J_{vw} = 0$ and $J_{wv} = 0$. Therefore,
    \begin{align*}
    \prob(\widehat{I}_{vw} = 1) &= 1 - \prob(J_{vw} = 0, J_{wv} = 0) = 1 - \prob(J_{vw} = 0) \prob(J_{wv} = 0),\\
    &= 1 - (1 - \pi_n(v, w))(1 - \pi_n(w, v)) = \pi_n^\square(v, w).
    \end{align*}
    In other words, we see that the indicators $(I_{vw})_{v, w \in V}$ and $(\widehat{I}_{vw})_{v, w \in V_n}$ are the same. Therefore, an exact coupling between the two models exists. 
\end{proof}

\begin{proof}[\textbf{Proof of Lemma~\ref{lem:IAG to IEG approx}}]
Fix a probability function $\pi_n$. The proof will consist of three main steps:
    \begin{enumerate}[label = \textbf{\Roman*.}]
        \item Conditional on $V_n$, we construct a coupling between $\IEG_n(\pi_n^\circ; V_n)$ and $\IEG_n(\pi_n'; V_n)$ with \begin{equation}\label{eq:succes prob phi IAG}
            \pi_n'(v, w)=  \pi_n^\circ(v, w) - \pi_n(v, w)\pi_n(w, v).
        \end{equation}
        \item Under the coupling from Step I, we identify what $\Xi_n^{(1)}$ and $\Xi_n^{(2)}$ look like (see Definition~\ref{def:loc coupl err}). Here we will apply Lemma~\ref{lem:IAG to IEG exact}.
        \item We bound the objects from Step II with probability tending to one.
    \end{enumerate}

    \paragraph{Step I.} Given $V_n$, set $(I_{vw}^\circ)_{v < w}$ to be the sequence of edge indicators in $\IEG_n(V_n, \pi_n^\circ)$, and $(I_{vw}')_{v < w}$ be the corresponding sequence in $\IEG_n(V_n, \pi_n')$. Since the realized vertices of both models are equal, we note that both sequences are independent, and that $I_{vw}^\circ$ has almost the same success probability as $I_{vw}'$, as can be seen in \eqref{eq:succes prob phi IAG}. Since each graph is fully determined by its edge sequence after realizing $V_n$, we will couple both graphs by constructing a coupling $(\hat{I}_{vw}^\circ, \hat{I}_{vw}')$ of $I_{vw}^\circ$ and $I_{vw}'$ for fixed $v < w$, but keeping the sequence of random vectors $(\hat{I}_{vw}^\circ, \hat{I}_{vw}')_{v < w}$ independent.

    The coupling of the the edge indicators in the two models is given by the joint probabilities below.
    \begin{equation}\label{eq:IEG coupling}
    \begin{split}
        \prob_n(\hat{I}_{vw}^\circ = 0, \hat{I}_{vw}' = 0) &= 1 - \pi_n^\circ(v, w),\\
        \prob_n(\hat{I}_{vw}^\circ = 1, \hat{I}_{vw}' = 0) &= \pi_n(v, w) \pi_n(w, v),\\
        \prob_n(\hat{I}_{vw}^\circ = 1, \hat{I}_{vw}' = 1) &=  \pi_n^\circ(v, w) - \pi_n(v, w) \pi_n(w, v).
    \end{split}
    \end{equation}
    Note that the marginal probabilities of the indicators indeed coincide with the desired probabilities, and note in particular that a coupling mismatch will ensue only in cases where $\IEG_n(V_n, \pi_n^\circ)$ has edges, but $\IEG_n(V_n, \pi_n')$ has not. Therefore, $\Xi_n^{(2)} = 0$ (cf. Definition~\ref{def:loc coupl err}).

    \paragraph{Step II.} Set $E'$ to be edge set of $\IEG_n(V_n, \pi_n')$, $E^\circ$ the edge set of $\IEG_n(V_n, \pi_n^\circ)$ and $\vec{E}$ the arc set of $\IAG_n(V_n, \pi_n)$. Because there exists an exact coupling between $\IEG_n(V_n, \pi_n')$ and $\IAG_n(V_n, \pi_n)$ (by Lemma~\ref{lem:IAG to IEG exact}), we know in particular that under said coupling it holds that
    \begin{equation}\label{eq:consequence exact coupling IAG and IEG}
            \1\left\{\{v, w\} \in E'\right\} - \1\left\{(v, w) \in \vec{E}\right\} -  \1\left\{(w, v) \in \vec{E}\right\} = 0,
    \end{equation}
    for all $v, w \in V_n$. Therefore, if we chain the coupling established in Step I to the exact coupling in Lemma~\ref{lem:IAG to IEG exact}, we can compare the $\IEG_n(V_n, \pi_n^\circ)$ to the model $\IAG_n(V_n, \pi_n)$. Under this chained coupling, using \eqref{eq:consequence exact coupling IAG and IEG}, we find that
    \begin{align*}
    \1\left\{\{v, w\} \in E^\circ\right\} &- \1\left\{(v, w) \in \vec{E}\right\} -  \1\left\{(w, v) \in \vec{E}\right\}\\ &= \1\left\{\{v, w\} \in E^\circ\right\} - \1\{\{v, w\} \in E'\}.
    \end{align*}
    Thus, recalling that $\Xi_n^{(2)} = 0$, we can bound
    \begin{equation}\label{eq:error counter IAG IEG}
             \Xi_n^{(1)} \leq \tfrac12 \sum_{v, w \in V_n} \left| \1\left\{\{v, w\} \in E^\circ\right\} - \1\left\{\{v, w\} \in E'\right\} \right| = \tfrac12 \sum_{v, w \in V_n} \left|\hat{I}_{vw}^\circ - \hat{I}_{vw}' \right|.
    \end{equation}
    Now, by \eqref{eq:IEG coupling}, $|\hat{I}_{vw}^\circ - \hat{I}_{vw}' |$ takes the value $1$ if and only if $\hat{I}_{vw}^\circ = 1$ and $\hat{I}_{vw}' = 0$. Note for each edge $\{v, w\}$ this happens independently with probability $\pi_n(v, w) \pi_n(w, v)$ conditional on $V_n$. Therefore, if we set $(J_{vw})_{v < w}$ to be an independent sequence, where $J_{vw} \sim \texttt{Bern}(\pi_n(v, w) \pi_n(w, v))$, we can bound \eqref{eq:error counter IAG IEG} as
    \[
    \Xi_n^{(1)} \preceq \sum_{v < w} J_{vw}.
    \]
    We will now show that $\Xi_n^{(1)} \leq  2\omega(n) \max\{1 , (n \pi_n^\uparrow)^2\}$ probability tending to one, to prove the lemma.

    \paragraph{Step III.} To show that with probability tending to one $\Xi_n^{(1)} \leq  2\omega(n) \max\{1 , (n \pi_n^\uparrow)^2\}$, we compute the expected value of $\Xi_n^{(1)}$ and apply the Chernoff bound for binomial random variables (see Theorem 2.21 in \cite{vanderHofstad2016RandomNetworks1}). First note that conditioned on $V_n$
    \[
    J_{vw} \preceq \hat{J}_{vw} \sim \texttt{Bern}( (\pi_n^\uparrow)^2).
    \]
    Using this domination, we have that $\Xi_n^{(1)} \preceq \hat{\Xi}_n^{(1)}$ for some new random variable $\hat{\Xi}_n^{(1)} \sim \texttt{Bin}(n^2, (\pi_n^\uparrow)^2)$. 
    In particular, this allows us to bound $\expec_n[\Xi_n^{(1)}] \leq (n \pi_n^\uparrow)^2$. This bound puts us in a situation where we can apply the Chernoff bound for binomial random variables. Particularly, given the function $\omega(n) \to \infty$, we can use Theorem 2.21 in \cite{vanderHofstad2016RandomNetworks1} to conclude that
    \begin{equation*}
    \begin{split}
    \prob_n&\left(\Xi_n^{(1)} \geq \expec_n[\hat{\Xi}_n^{(1)}] + \omega(n) \max\left\{1, \sqrt{\expec_n[\hat{\Xi}_n^{(1)}]}\right\} \right)\\ &\leq \prob_n\left(\hat{\Xi}_n^{(1)} \geq \expec_n[\hat{\Xi}_n^{(1)}] + \omega(n) \max\left\{1, \sqrt{\expec_n[\hat{\Xi}_n^{(1)}]}\right\} \right),\\
    &\leq \exp\left( - \frac{\omega(n)^2 \max\left\{1, \sqrt{\expec_n[\hat{\Xi}_n^{(1)}]}\right\}^2}{2\left(\expec_n[\hat{\Xi}_n^{(1)}] + \tfrac13 \omega(n) \max\left\{1, \sqrt{\expec_n[\hat{\Xi}_n^{(1)}]}\right\}\right)} \right),\\
    &\leq \exp(- 3 \omega(n)/2 ).
    \end{split}
    \end{equation*}
    Now, we define the event $\mathcal{B}_n := \{\Xi_n^{(1)} \leq  2\omega(n) \max\{1 , (n \pi_n^\uparrow)^2\} \}$. Note that $\expec_n[\hat{\Xi}_n^{(1)}] \leq \omega(n) \max\{1, (n \pi_n^\uparrow)^2\}$, due to the multiplication with $\omega(n) \to \infty$, and that \[\omega(n) \max\left\{1, \sqrt{\expec_n[\hat{\Xi}_n^{(1)}]}\right\} \leq \omega(n) \max\{1, (n \pi_n^\uparrow)^2\},\] due to the fact that taking the square root makes the expected value smaller if the maximum is not attained at one. Thus, the previously derived exponential inequalities show that
    \[
    \prob(\neg \mathcal{B}_n) = \expec[\prob_n(\neg \mathcal{B}_n)] \leq \expec[ \exp(- 3 \omega(n) / 2)] = \exp(- 3 \omega(n) / 2) \to 0.
    \]
\end{proof}

Now, we move on to prove Lemma~\ref{lem:ASRG to ESRG}. As stated in Section~\ref{sec:dir to undir}, we will prove this lemma by providing an explicit coupling between ASRG and ESRG and counting how many errors this coupling makes. The idea behind the coupling is to place arcs in ASRG between the same vertices are edges in ESRG. We start by defining the coupling. The rules of this coupling are illustrated in Figure~\ref{fig:ESRG ASRG coupling}.

\begin{figure}
    \centering
    \includegraphics[scale = 0.6]{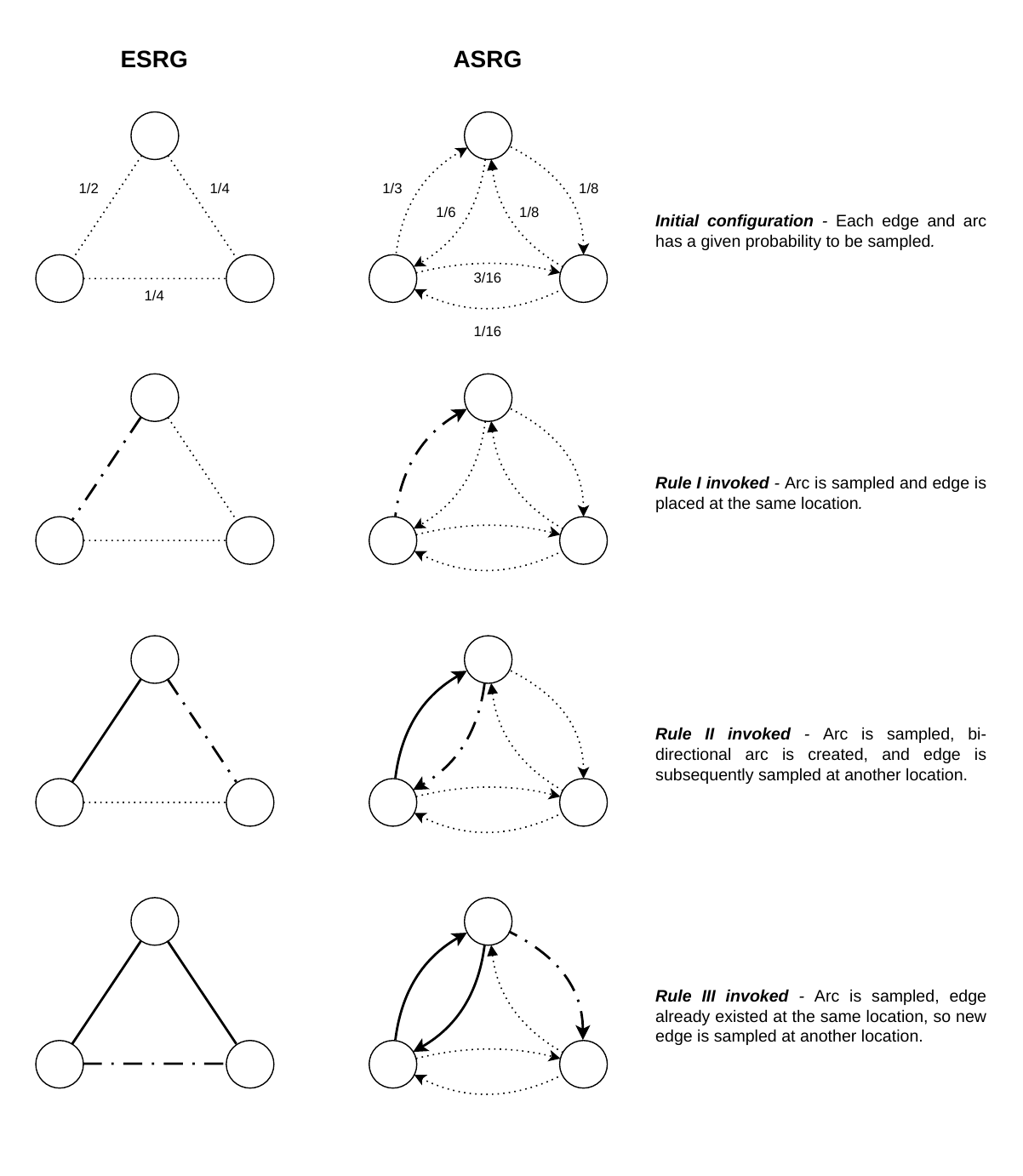}
    \caption{\emph{Illustration of the coupling in Definition~\ref{def:coupling ASRG ESRG}. In each line a new step in the graph generation process is taken. The dash-dotted lines are the newly generated arcs/edges.}}
    \label{fig:ESRG ASRG coupling}
\end{figure}

\begin{definition}[Approximate location coupling ASRG vs. ESRG]\label{def:coupling ASRG ESRG}
    Sample a vector $V_n$ and a (directed) mass function $\nu_n$ on $V_n \times V_n$. Also, define the (undirected) mass function $\nu_n^\circ$ from $\nu_n$ by setting $\nu_n^\circ(v ,w ) = \nu_n(v, w) + \nu_n(w, v)$. From these objects we will generate two graphs: a directed graph $\vec{G}_n = (V_n, \vec{E}_n)$ and an undirected graph $G_n = (V_n, E_n)$ with $m$ arc/edges.

    We build both graphs one arc/edge at a time, where $\vec{G}_n$ follows the building algorithm of $\ASRG_n(V_n, \mu_n, m)$ in Definition~\ref{def:ESRG and ASRG}. The graph $G_n$ will be built simultaneously according to the following three rules:

    \begin{enumerate}[label = \Roman*.]
    \item If arc $(v, w)$ is generated in $\vec{G}_n$, and the arc $(w, v)$ does not exist yet, then generate $\{v, w\}$ in $G_n$.
    \item If in the arc $(v, w)$ is generated in $\vec{G}_n$, and the arc $(w, v)$ already exists, then generate an edge in $G_n$ by sampling edges independently from $\nu_n^\circ$ until a non-existing edge is found.
    \item If arc $(v, w)$ is generated in $\vec{G}_n$, and the edge $\{v, w\}$ in $G_n$ already exists, then generate an edge in $G_n$ according to the procedure in rule II. 
\end{enumerate}
\end{definition}

Before we can analyse the coupling, we first need to show that Definition~\ref{def:coupling ASRG ESRG} is a proper coupling between $\ASRG_n(V_n, \nu_n, m)$ and $\ESRG_n(V_n, \nu_n^\circ, m)$. To do so, note that an edge $\{v, w\}$ gets sampled proportional to $\nu_n(v, w) + \nu_n(w, v)$ in rule I, since both arcs $(v, w)$ and $(w, v)$ might lead to edge $\{v, w\}$. Additionally, rule II and III will trivially generate an edge $\{v, w\}$ proportional to $\nu_n^\circ(v, w)$. To make these observations formal, we state and prove an auxiliary lemma.

\begin{lemma}[ESRG and ASRG coupling exists]\label{lem:ESRG ASRG coupling exists}
    The procedure specified in Definition~\ref{def:coupling ASRG ESRG} is a coupling between $\ASRG_n(V_n, \nu_n, m)$ and $\ESRG_n(V_n, \nu_n^\circ, m)$.
\end{lemma}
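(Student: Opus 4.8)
The plan is to verify that the joint construction of Definition~\ref{def:coupling ASRG ESRG} has the two prescribed marginal laws; since a coupling is by definition any joint law with the correct marginals, this is all that is required. The directed marginal needs no work: by construction $\vec{G}_n$ is assembled one fresh arc at a time exactly according to the building algorithm of $\ASRG_n(V_n,\mu_n,m)$ in Definition~\ref{def:ESRG and ASRG}, so (conditionally on the shared vector $V_n$, hence also unconditionally) it has the law of $\ASRG_n(V_n,\mu_n,m)$. Everything therefore reduces to showing that the undirected marginal of $G_n$ is $\ESRG_n(V_n,\mu_n^\circ,m)$. Throughout I would condition on $V_n$ and abbreviate $\mu_n(\vec{E}):=\sum_{(v,w)\in\vec{E}}\mu_n(v,w)$ and $\mu_n^\circ(E):=\sum_{\{v,w\}\in E}\mu_n^\circ(v,w)$ for the total mass of an arc set, respectively an edge set.

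First I would record two invariants of the coupled process, both proved by induction on the number of arcs/edges placed so far. \emph{(i) At every stage $|E_n|=|\vec{E}_n|$:} each fresh arc accepted into $\vec{G}_n$ triggers exactly one of rules I--III, and each of them adds exactly one new edge to $G_n$ (rule~II adds $\{v,w\}$, which is genuinely new since that rule fires only when $\{v,w\}\notin E_n$; rules~I and III add one fresh edge by construction). \emph{(ii) $\mathcal{U}(\vec{E}_n)\subseteq E_n$:} an arc $(v,w)$ can only have entered $\vec{E}_n$ via rule~II, which simultaneously placed the edge $\{v,w\}$, or via rule~I, whose very hypothesis is that $\{v,w\}$ is already present; and edges are never deleted. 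Invariant~(ii) also shows the configuration ``$\{v,w\}\notin E_n$ while $(w,v)\in\vec{E}_n$'' can never arise, so for a newly accepted arc $(v,w)$ the possibilities genuinely partition into: $\{v,w\}\in E_n$ (rules I/III, i.e.\ draw a fresh edge from $\mu_n^\circ$), and $\{v,w\}\notin E_n$, in which case automatically $(w,v)\notin\vec{E}_n$ and rule~II applies.

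Next I would compute the one-step transition of the edge set. Fix a reachable state $(\vec{E}_n,E_n)$ with $|E_n|=k<m$ and a target edge $\{a,b\}\notin E_n$. By the $\ASRG$ acceptance rule, the next accepted arc equals a given $(v,w)\notin\vec{E}_n$ with probability $\mu_n(v,w)/(1-\mu_n(\vec{E}_n))$. Both arcs $(a,b)$ and $(b,a)$ are absent from $\vec{E}_n$ (contrapositive of~(ii)), so rule~II turns either of them into the edge $\{a,b\}$; and if instead the next accepted arc $(v,w)$ has $\{v,w\}\in E_n$, then rules~I/III draw a fresh edge equal to $\{a,b\}$ with probability $\mu_n^\circ(a,b)/(1-\mu_n^\circ(E_n))$. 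Writing $S:=\sum_{(v,w)\notin\vec{E}_n,\ \{v,w\}\in E_n}\mu_n(v,w)$ for the arc-mass that triggers rules~I/III, the probability that the new edge is $\{a,b\}$ equals
\[
\frac{\mu_n^\circ(a,b)}{1-\mu_n(\vec{E}_n)}+\frac{S}{1-\mu_n(\vec{E}_n)}\cdot\frac{\mu_n^\circ(a,b)}{1-\mu_n^\circ(E_n)}
=\frac{\mu_n^\circ(a,b)}{1-\mu_n(\vec{E}_n)}\left(1+\frac{S}{1-\mu_n^\circ(E_n)}\right).
\]
The decisive identity is $S=\mu_n^\circ(E_n)-\mu_n(\vec{E}_n)$, which is exactly invariant~(ii) rewritten: the ordered pairs underlying $E_n$ split into those already lying in $\vec{E}_n$ and those not. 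Substituting it collapses the right-hand side to $\mu_n^\circ(a,b)/(1-\mu_n^\circ(E_n))$, which is precisely the one-step transition kernel of $\ESRG_n(V_n,\mu_n^\circ,m)$. Since this probability depends on the past only through $E_n$, averaging out $\vec{E}_n$ shows that the marginal sequence of edge sets $\emptyset=E_n^{(0)}\subset E_n^{(1)}\subset\cdots\subset E_n^{(m)}$ is a Markov chain started at $\emptyset$ with the $\ESRG$ kernel; hence $E_n^{(m)}$ has the law of $\ESRG_n(V_n,\mu_n^\circ,m)$, conditionally on $V_n$ and therefore also unconditionally.

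The step I expect to be the real obstacle is the bookkeeping behind the identity $S=\mu_n^\circ(E_n)-\mu_n(\vec{E}_n)$ and the case analysis feeding it: one must verify that rule~II is available for \emph{both} orientations of a not-yet-present edge, that a fresh arc whose reverse arc is already present still contributes (through rules~I/III, not through rule~II), and that invariant~(ii) is exactly what identifies the ``leftover'' arc-mass $1-\mu_n(\vec{E}_n)-S$ with the $\ESRG$ normaliser $1-\mu_n^\circ(E_n)$. A routine side point to note is that all rejection loops — the $\ASRG$ arc-sampling and the rule~III edge-sampling — terminate almost surely, since at every stage a not-yet-used arc and a not-yet-used edge remain available (the same requirement already implicit in Definition~\ref{def:ESRG and ASRG}).
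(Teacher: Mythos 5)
Your proof is correct, and it takes a slightly different (and in one respect tighter) route than the paper. The paper also argues by induction on the number of placed arcs, but it conditions on \emph{which rule fires}: rules I/III are dismissed as trivially consistent because the fresh edge is drawn from $\mu_n^\circ$ restricted to unoccupied edges, and for rule II it computes the conditional law of the new edge with normaliser $1-\sum_{(v,w)\in\mathcal{L}(\vec{E}_s)}\mu_n(v,w)$, then identifies $\mathcal{L}(\vec{E}_s)\cap V_n^{\vee 2}$ with the occupied edge set of $G_n$ --- an identification that is literally exact only when no coupling errors have yet occurred, since rules I/III can place edges at locations carrying no arc. You instead aggregate the two routes to a new edge (rule II directly, rules I/III via the resampling step) into one unconditional one-step transition probability, and the normalisations combine exactly through your invariant $\mathcal{U}(\vec{E}_n)\subseteq E_n$ in the form $S=\mu_n^\circ(E_n)-\mu_n(\vec{E}_n)$; this keeps the bookkeeping correct even after extra edges have appeared, and the observation that the resulting kernel depends on the past only through $E_n$ cleanly yields the Markov/ESRG marginal. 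What your approach buys is precision exactly at the point you flagged as the obstacle; what the paper's buys is a shorter case-by-case presentation. Two small presentational points: make explicit that rule I takes precedence over rule II when both conditions hold (you use this tacitly when asserting rule II fires only if $\{v,w\}\notin E_n$), and in the justification of invariant (ii) mention the rule III case explicitly (it is covered because its hypothesis forces the reverse arc, hence by induction the edge, to be present already). Neither affects the validity of the argument.
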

\begin{proof}
    Start for both models by generating the same vertex type vector $V_n$. We then note that graph $\vec{G}_n$ in Definition~\ref{def:coupling ASRG ESRG} will trivially follow the generation procedure of $\ASRG_n(V_n, \nu_n, m)$. To show $G_n$ follows the generation procedure of $\ESRG_n(V_n, \nu_n^\circ, m)$ we will inductively generate an arc in $\vec{G}$ and show that the corresponding edge generated in $G_n$ will have the same probability as an edge being sampled according to the procedure in Definition~\ref{def:ESRG and ASRG}.

    \paragraph{Base case.} As our base case we will generate the first edge/arc pair. Since no edges or arcs have been generated yet, we note that only rule I could apply. In this case, the edge $\{v, w\}$ in $G_n$ will be generated if and only if either the arc $(v, w)$ or the arc $(w, v)$ is generated in $\vec{G}$. This event will happen with probability $\nu_n(v, w) + \nu_n(w, v) = \nu_n^\circ(v, w)$. Therefore, when generating the first edge/arc-pair the marginal probabilities in both models are still respected.

    \paragraph{Induction step.} Suppose that $s$ arcs/edge-pairs have already been generated, and assume marginally both graph $G_n$ and $\vec{G}_n$ follow the procedure in Definition~\ref{def:ESRG and ASRG}. We will consider the generation of arc $s + 1$. For this, denote by $\vec{E}_s$ the set of already generated arcs in $\vec{G}_n$, and denote by $\mathcal{A}(\vec{E}_s) \subset V_n \times V_n$ the set of occupied arc locations, viewed from the perspective of $\vec{G}_n$. In other words, $\mathcal{A}(\vec{E}_s) := \{(v, w) \in V_n \times V_n : (v, w) \in \vec{E}_s \vee (w, v) \in \vec{E}_s\}$. We will now check whether the procedure in Definition~\ref{def:ESRG and ASRG} is still respected when either of the rules in Definition~\ref{def:coupling ASRG ESRG} is invoked. 
    
    Note that the only interesting rule to check is rule I, since in both rule II and III the edge generated in $G_n$ is independent from the arc generated in $\vec{G}_n$. Moreover, the generated edge in $G_n$ has probability proportional to the remaining mass in $\nu_n^\circ$. Therefore, these rules will trivially satisfy Definition~\ref{def:ESRG and ASRG}. 

    Now, if the rule I is invoked, we note that either of the arcs $(v, w)$ and $(w, v)$ will result in the generation of $\{v, w\}$ in $G_n$. Therefore the raw mass of $\{v, w\}$ is, similar to the base case, given by $\nu_n^\circ(v, w)$. However, when rule I is invoked, we are conditioning on the event that the generated arc in $\vec{G}_n$ does not come from the set $\mathcal{A}(\vec{E}_s) \backslash \vec{E}_s$. Therefore, since arcs from $\mathcal{A}(\vec{E}_s) \backslash \vec{E}_s$ and $\vec{E}_s$ are off-limits in the arc-generation process in this step, the probability of $\{v, w\}$ being generated in $G_n$ under rule I is
    \begin{equation}\label{eq:rule I mass}
    \frac{\nu_n^\circ(v, w)}{1 - \sum_{(x, y) \in \mathcal{A}(\vec{E}_s)} \nu_n(x, y)} .
    \end{equation}
    Now, the idea is to transform the $\nu_n$ mass function in the denominator into the function $\nu_n^\circ$. To do this, we will define $V_n^{\vee 2}$ to be a set of unordered pairs of vertices from $V_n$. Using this definition, noting that $\mathcal{A}(\vec{E}_s)$ contains arc $(x, y)$ if and only if it contains $(y, x)$, we may rewrite \eqref{eq:rule I mass}
    \[
    \frac{\nu_n^\circ(v, w)}{1 - \sum_{(x, y) \in \mathcal{A}(\vec{E}_s) \cap V_n^{\vee 2}}[ \nu_n(x, y) + \nu_n(y, x)]} = \frac{\nu_n^\circ(v, w)}{1 - \sum_{(x, y) \in \mathcal{A}(\vec{E}_s) \cap V_n^{\vee 2}}\nu_n^\circ(x, y)}.
    \]
    Note that $\mathcal{A}(\vec{E}_s) \cap V_n^{\vee 2}$ is exactly the set of occupied edges in $G_n$. Thus, the probability that $\{v, w\}$ gets generated under rule I is given by $\nu_n^\circ(v, w)$, normalised over the masses of the leftover edges that can be chosen, as required in $\ESRG_n(V_n, \nu_n^\circ, m)$. Finally, this means that the marginals of $\vec{G}_n$ and $G_n$ in Definition~\ref{def:coupling ASRG ESRG} indeed follow the rules in Definition~\ref{def:ESRG and ASRG}.
\end{proof}

Lemma~\ref{lem:ESRG ASRG coupling exists} ensures that we have found a coupling. To calculate the rate of insensitivity required, we need to bound the number of times there is a location with an edge in $G_n$ but no arc in $\vec{G}_n$ and vice versa under the coupling in Definition~\ref{def:coupling ASRG ESRG}. To do this, we consider the coupling as a dynamic process, and note that a location error is only induced when rule II is invoked; in rule I the arc and edge are matched exactly, and in rule III a miss-match is both created and resolved.

We will first specify the process which counts miss-coupled edges/arcs under Definition~\ref{def:coupling ASRG ESRG} as the graphs are constructed. Thereafter, we will identify exactly when miss-couplings occur and show that this process induces a super-martingale. This will allow us to compute properties of this \emph{error counting process} more easily.

\begin{definition}[Error counting process]\label{def:error process}
    Consider the process in Definition~\ref{def:coupling ASRG ESRG} and denote by $G_n^{(s)}$ and $\vec{G}_n^{(s)}$ the intermediate graphs of $\ESRG_n(V_n, \nu^\circ_n,m_n)$ and $\ASRG_n(V_n, \nu_n,m_n)$ after $s$ arcs have been generated, respectively. We define the \emph{error counting process} $(\Psi_{s})_{1 \le 1 \le m_n}$ by letting $\Psi_s$ denote the number of location errors between these graphs. This is the number of times that an edge $\{v, w\}$ is present in $G_n^{(s)}$, but both $(v, w)$ and $(w, v)$ are not present in $\vec{G}_n^{(s)}$ together with the number of times $(v, w)$ or $(w, v)$ is present in $\vec{G}_n^{(s)}$, but $\{v, w\}$ is not present in $G_n^{(s)}$.
\end{definition}
\begin{remark}
    Note for fixed $n$ and $m_n$ that $\Psi_{m_n} = \Xi_n$, since we have found all coupling-errors in the graph once all edges/arcs have been placed.
\end{remark}

\begin{lemma}[Conditional error process]\label{lem:error process}
    We have that $\Psi_{s + 1} - \Psi_s$ is the indicator that rule II is invoked when generating arc/edge $s + 1$ in Definition~\ref{def:coupling ASRG ESRG}.
\end{lemma}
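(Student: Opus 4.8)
The plan is to first record a structural invariant of the coupling in Definition~\ref{def:coupling ASRG ESRG}: for every $s$, every edge of $\mathcal{U}(\vec{G}_n^{(s)})$ is already an edge of $G_n^{(s)}$. Granting this, the second type of location error in Definition~\ref{def:error process}---an arc location occupied in $\vec{G}_n^{(s)}$ with no matching edge in $G_n^{(s)}$---never occurs, so $\Psi_s$ counts exactly the \emph{unmatched edges}: the $\{v,w\}\in G_n^{(s)}$ with neither $(v,w)$ nor $(w,v)$ in $\vec{G}_n^{(s)}$. It then remains to track how the set of unmatched edges changes when arc/edge number $s+1$ is generated, which is a short case analysis. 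Throughout I read the three rules of Definition~\ref{def:coupling ASRG ESRG} as the mutually exclusive cases determined by the fresh arc $(v,w)$ drawn by the $\ASRG$ algorithm at step $s+1$: \textbf{rule~II} when $\{v,w\}\notin G_n^{(s)}$ (which forces $(w,v)\notin\vec{G}_n^{(s)}$ via the invariant), \textbf{rule~I} when $\{v,w\}\in G_n^{(s)}$ but $(w,v)\notin\vec{G}_n^{(s)}$, and \textbf{rule~III} when $(w,v)\in\vec{G}_n^{(s)}$; so ``rule~III is invoked at step $s+1$'' means precisely that the reverse of the drawn arc is already present.

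I would prove the invariant by induction on $s$, with $s=0$ trivial. In the step the $\ASRG$ algorithm adds a fresh arc $(v,w)$, so $\vec{E}_{s+1}=\vec{E}_s\cup\{(v,w)\}$ and on the directed side only the location $\{v,w\}$ changes status. Under rule~II the matching edge $\{v,w\}$ is placed in $G_n$ at the same moment; under rule~I the edge $\{v,w\}$ is present by the triggering condition; under rule~III the edge $\{v,w\}$ is present because $(w,v)\in\vec{G}_n^{(s)}$ and the induction hypothesis applies. As edges are never removed from $G_n$, the invariant persists. Two consequences, used below, follow at once: (i) whenever rule~III's sampling procedure is run (under rule~I or rule~III) the edge $\{a,b\}$ it returns has $\{a,b\}\neq\{v,w\}$, since the sampler avoids edges already in $G_n^{(s)}$ while $\{v,w\}\in G_n^{(s)}$, and by the invariant neither $(a,b)$ nor $(b,a)$ lies in $\vec{E}_{s+1}$, so $\{a,b\}$ is a \emph{new} unmatched edge; and (ii) adding the single arc $(v,w)$ can only affect the matched/unmatched status of the edge $\{v,w\}$ and of no other edge.

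The change $\Psi_{s+1}-\Psi_s$ now splits over the three cases. Under rule~II the arc $(v,w)$ and the edge $\{v,w\}$ are placed together and immediately matched, and $\{v,w\}$ was not an edge before, so no unmatched edge is created or destroyed and $\Psi_{s+1}=\Psi_s$. Under rule~I the edge $\{v,w\}$ was present and, since $(v,w)$ is fresh and $(w,v)\notin\vec{G}_n^{(s)}$, it was unmatched; adding $(v,w)$ matches it, removing one error, while by (i) the sampled edge $\{a,b\}$ adds exactly one new unmatched edge, for a net change of $0$. Under rule~III the edge $\{v,w\}$ was already matched through $(w,v)$, so adding $(v,w)$ removes no error, while the sampled edge $\{a,b\}$ again adds exactly one new unmatched edge, for a net change of $+1$. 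Hence $\Psi_{s+1}-\Psi_s=\1\{\text{rule~III is invoked at step } s+1\}$, as claimed. The only real subtlety is keeping rule~I and rule~III apart---both run the identical random sampler, yet a pre-existing unmatched edge gets cancelled only under rule~I---together with using the invariant to be sure the sampler's output is genuinely a fresh unmatched edge rather than one that happens to be matched by the just-added arc.
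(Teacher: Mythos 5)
Your proof is correct and takes essentially the same route as the paper's: a rule-by-rule case analysis showing that rule II leaves $\Psi_s$ unchanged, that rule I both resolves one unmatched edge and creates a new one via the resampled edge, and that only rule III adds a net error. The only difference is presentational: you establish the invariant $\mathcal{U}(\vec{G}_n^{(s)})\subseteq G_n^{(s)}$ by induction and use it to disambiguate the overlapping conditions of rules I--III, whereas the paper asserts the same fact directly from the rules ("arcs are only generated at locations where edges already exist").
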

\begin{proof}
    We first note that whenever rule I is invoked no coupling error is created, since both arcs are placed at the same locations. Thus, under the execution of rule I we have that $\Psi_{s + 1} = \Psi_s$. Next, we see that rule II will create exactly one additional error. This is, because in the undirected model an additional edge is placed, while in the directed version of the model a bidirectional arc is created. This bidirectional arc is in the same location as a previously placed edge. Therefore, the newly generated edge in the undirected version of the model forms the coupling-error. This means that $\Psi_{s + 1} = \Psi_s + 1$ when rule II is invoked.

    Finally, note when rule III is invoked both a new miss-coupling is created, while another one is resolved. Since an arc is generated in the same place as an already existing edge, that miss-coupling is resolved, reducing the number by one. However, since we are also placing an additional edge, we are creating a new miss-coupling as well. We know that this new edge cannot be generated in a location where there already is an arc in the directed version of the model, since from rules I--III we see that arcs are only generated at locations where edges already exist (or an edge is immediately placed at the same location). Thus, we see that in case of rule II we have that $\Psi_{s + 1} = \Psi_s$. All in all, we see that only the application of rule II will increase the number of miss-couplings by one. Therefore, $\Psi_{s + 1}$ given $\Psi_s$ is the indicator that rule II is invoked.
\end{proof}

\begin{lemma}[Error super-martingale]\label{lem:error martingale}
    Consider $\ASRG_n(V_n, \nu_n, m)$ conditioned on $V_n$. We have that the process $(M_s)_{s \leq n}$ given by
    \[
    M_s = \frac{\Psi_s -  \sum_{i = 1}^s \prod_{j = i + 1}^s b_j a_i}{\prod_{i = 1}^s b_i},
    \]
    is a conditional super-martingale given $V_n$. Here, the sequences $(a_i)_{i \geq 1}$ and $(b_i)_{i \geq 1}$ are given by
    \[
    a_i = \frac{i}{1/\nu^\uparrow_n - i}, \qquad b_i = 1 - \frac{2}{1/\nu^\uparrow_n - i}.
    \]
\end{lemma}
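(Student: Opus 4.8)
The plan is to work conditionally on $V_n$ throughout, so that $\mu^\uparrow_n$ — and hence the deterministic sequences $(a_i)$ and $(b_i)$ — are fixed, and to let $\mathcal{F}_s$ denote the $\sigma$-algebra generated by the first $s$ steps of the coupled construction in Definition~\ref{def:coupling ASRG ESRG}. Since only finitely many arcs/edges are ever placed, $\Psi_s$ is bounded, $\mathcal{F}_s$-measurable, and each $M_s$ is integrable, with $M_0 = 0$; so it suffices to show $\expec_n[M_{s+1} \mid \mathcal{F}_s] \le M_s$ for every step $s+1$ in the relevant range. I will tacitly restrict to the regime where $\mu^\uparrow_n(1+i) < 1$ for all $i$ in that range (which holds for $n$ large under the hypotheses of Lemma~\ref{lem:ASRG to ESRG}, and outside of which the claimed sequences are of no interest); this guarantees $a_i > 0$ and $b_i \in (0,1)$, hence $B_s := \prod_{i=1}^s b_i > 0$, so that every division below is legitimate.

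The heart of the argument is a one-step bound on the probability that rule III fires. Let $R_{s+1}$ be the event that rule III is invoked when generating arc/edge $s+1$. By Lemma~\ref{lem:error process}, $\Psi_{s+1}-\Psi_s = \1\{R_{s+1}\}$, so $\expec_n[\Psi_{s+1}\mid\mathcal{F}_s] = \Psi_s + \prob_n(R_{s+1}\mid\mathcal{F}_s)$. To estimate $\prob_n(R_{s+1}\mid\mathcal{F}_s)$, recall that at step $s+1$ a fresh arc is drawn from $\mu_n$ with rejection of the already-placed arc set $\vec{E}_s$; by the rejection-sampling identity the accepted arc equals a given $(v,w)\notin\vec{E}_s$ with probability $\mu_n(v,w)/(1-\mu_n(\vec{E}_s))$. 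Rule III fires exactly when this arc is \emph{completable}, i.e.\ $(v,w)\notin\vec{E}_s$ but $(w,v)\in\vec{E}_s$; each unordered pair carrying exactly one arc contributes exactly one completable arc, and pairs carrying zero or two arcs contribute none. Since every invocation of rule III (and possibly rule I) creates a new bidirectional pair and pairs are never destroyed, after $s$ steps at least $\Psi_s$ of the occupied pairs are bidirectional, leaving at most $s - 2\Psi_s$ pairs with exactly one arc. Combining this with $\mu_n(\vec{E}_s)\le s\mu^\uparrow_n$ and $\mu_n(v,w)\le\mu^\uparrow_n$ yields
\[
\prob_n(R_{s+1}\mid\mathcal{F}_s)\;\le\;\frac{(s-2\Psi_s)\,\mu^\uparrow_n}{1-s\mu^\uparrow_n}\;=\;\frac{s-2\Psi_s}{1/\mu^\uparrow_n-s}.
\]

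Next I would turn this into the affine recursion that $(a_i),(b_i)$ are built for. An elementary cross-multiplication — valid because $1/\mu^\uparrow_n-(s+1)>0$ and $s\ge 2\Psi_s$ (as the occupied pairs split into singly- and doubly-occupied ones) — gives
\[
\frac{s-2\Psi_s}{1/\mu^\uparrow_n-s}\;\le\;\frac{(s+1)-2\Psi_s}{1/\mu^\uparrow_n-(s+1)}\;=\;a_{s+1}+(b_{s+1}-1)\Psi_s,
\]
so that $\expec_n[\Psi_{s+1}\mid\mathcal{F}_s]\le b_{s+1}\Psi_s+a_{s+1}$. Writing $c_s:=\sum_{i=1}^s\bigl(\prod_{j=i+1}^s b_j\bigr)a_i$, one checks straight from the definitions that $B_{s+1}=b_{s+1}B_s$ and $c_{s+1}=b_{s+1}c_s+a_{s+1}$, whence
\[
\expec_n[M_{s+1}\mid\mathcal{F}_s]=\frac{\expec_n[\Psi_{s+1}\mid\mathcal{F}_s]-c_{s+1}}{B_{s+1}}\le\frac{b_{s+1}\Psi_s+a_{s+1}-(b_{s+1}c_s+a_{s+1})}{b_{s+1}B_s}=\frac{\Psi_s-c_s}{B_s}=M_s,
\]
which is the claimed super-martingale property.

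The main obstacle is the one-step estimate, and specifically the bookkeeping behind the bound of $s-2\Psi_s$ completable arcs: this rests on reading off from the rules of Definition~\ref{def:coupling ASRG ESRG} together with Lemma~\ref{lem:error process} that bidirectional pairs in $\vec{G}_n$ are only ever created — at least one per rule-III step — and never removed, so the number of singly-occupied pairs drops by (at least) two for each unit of $\Psi_s$. The factor $2$ in $b_i$ encodes precisely this gain, and it is what makes the cross-multiplication step go through in the regime $\mu^\uparrow_n\ll 1/n$ relevant to Lemma~\ref{lem:ASRG to ESRG}; the cruder bound $\prob_n(R_{s+1}\mid\mathcal{F}_s)\le s/(1/\mu^\uparrow_n-s)$ would not give a super-martingale. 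The remaining pieces — the rejection-sampling identity, the integrability and measurability checks, and the two algebraic identities for $B_s$ and $c_s$ — are routine.
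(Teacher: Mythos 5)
Your proposal is correct and follows essentially the same route as the paper: the same one-step estimate $\prob_n(R_{s+1}\mid\mathcal{F}_s)\le (s-2\Psi_s)/(1/\mu^\uparrow_n-s)$, obtained from the rejection-sampling identity together with the count of at most $s-2\Psi_s$ completable arcs (each unit of $\Psi_s$ accounting for a bidirectional pair), followed by the affine-recursion normalization with the sequences $(a_i)$ and $(b_i)$. Your additional cross-multiplication step upgrading the bound to $a_{s+1}+b_{s+1}\Psi_s$, so that it matches the $(s+1)$-indexed normalization of $M_{s+1}$, and your explicit restriction ensuring $b_i>0$ are minor refinements that tidy up the index bookkeeping in the paper's own display without changing the argument.
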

\begin{proof}
    The proof will consist of the following two steps:
    \begin{enumerate}[label = \Roman*.]
        \item We will find an upper-bound on probability of $\Psi_{s + 1}$ increasing by one given $\Psi_s$.
        \item We use this to prove that $M_s$ is a super-martingale.
    \end{enumerate}
    A general observation we will often use through the proof, is the fact that $\nu_n^\uparrow \geq \nu_n(v, w)$ for all $v, w \in V_n$.

    \paragraph{Step I.} From Lemma~\ref{lem:error process} it follows that at step $s$, given the value of $\Psi_{s}$, an additional coupling error is made only when rule II needs to be invoked. Note that this rule is only invoked when an arc gets generated at the same location as an already generated arc, creating a bidirectional arc. If we denote by $\vec{E}_s$ the set of already placed arcs after $s$ arc generation steps, then we define $\vec{E}_s^- := \{(v, w) \in \vec{E}_s : (w, v) \not\in \vec{E}_s\}$ as the set of arcs that have not yet become a bidirectional arc. Under these definitions, we can write the probability of creating a bidirectional arc when generating arc number $s + 1$ as 
    \[
    \frac{\sum_{(v, w) \in \vec{E}_s^- } \nu_n(v, w)}{1 - \sum_{(v, w) \in \vec{E}_s} \nu_n(v, w)}.
    \]
    Now, note that $| \vec{E}_s| = s$ since $s$ arcs have been placed. Moreover, note that $| \vec{E}_s^- | = s - 2\Psi_{s}$. The factor two appears, since each coupling error (cf. Definition~\ref{def:error process}) corresponds to a bidirectional arc by Lemma~\ref{lem:error process}. Both these arcs need be removed from $\vec{E}_s$ to find $\vec{E}_s^-$. Using this, we find the following error upper-bound at step $s + 1$:
    \[
     \frac{\sum_{(v, w) \in \vec{E}_s^- } \nu_n(v, w)}{1 - \sum_{(v, w) \in \vec{E}_s} \nu_n(v, w)} \leq \frac{(s - 2\Psi_s)  \nu_n^\uparrow}{1 - s \nu^\uparrow_n} = \frac{s - 2\Psi_s}{1/\nu^\uparrow_n - s}.
    \]
    
    \paragraph{Step II.} Denote by $I_i$ the indicator that rule I had to be executed when generating arc/edge-pair $i$. From the result of Step I we know that conditional on $\Psi_s$ we have that $I_{s + 1} \preceq \texttt{Bern}((s - 2 \Psi_s)/(1/\mu^\uparrow_n - s))$. Using both these facts we will now compute $\expec[M_{s + 1} \;|\; M_{s}]$ for fixed $s$, establishing the martingale property. For this we note that $M_{s}$ is in the sigma algebra generated by $\Psi_s$, since it is only off-set by deterministic constants. Therefore, we can rewrite
    \begin{align*}
    	\expec_n[M_{s + 1} \;|\; M_{s}] 
    	&= \frac{\expec_n[\Psi_{s + 1} \;|\; M_{s}] - \sum_{i = 1}^s \prod_{j = i + 1}^s b_j a_i}{\prod_{i = 1}^s b_i}\\
    	&= \frac{\expec_n[\Psi_{s + 1} \;|\; \Psi_{s}]  - \sum_{i = 1}^s \prod_{j = i + 1}^s b_j a_i}
    		{\prod_{i = 1}^s b_i}\\
    	&= \frac{\expec_n[\Psi_{s + 1} - \Psi_s \;|\; \Psi_{s}] + \Psi_s  - \sum_{i = 1}^s \prod_{j = i + 1}^s b_j a_i}{\prod_{i = 1}^s b_i},\\
    	&= \frac{\expec_n[I_{s + 1} \;|\; \Psi_{s}] + \Psi_s  - \sum_{i = 1}^s \prod_{j = i + 1}^s b_j a_i}{\prod_{i = 1}^s b_i},
    \end{align*}
    where we used Lemma~\ref{lem:error process} for the last step.
    
    Next, using the previously established conditional stochastic domination, we upper-bound the conditional expectation as
    \begin{equation}\label{eq:condition prob martingale}
    \begin{split}
    \expec_n[M_{s + 1} \;|\; M_{s}] &\leq \frac{\frac{s - 2\Psi_s}{ 1/\nu^\uparrow_n - s} + \Psi_s - \sum_{i = 1}^s \prod_{j = i + 1}^s b_j a_i}{\prod_{i = 1}^s b_i},\\
    &= \frac{\frac{s}{1/\nu^\uparrow_n - s} + (1 - \frac{2}{1/\nu^\uparrow_n - s}) \Psi_s - \sum_{i = 1}^s \prod_{j = i + 1}^s b_j a_i}{\prod_{i = 1}^s b_i}, \\ &= \frac{a_s + b_s \Psi_s - \sum_{i = 1}^s \prod_{j = i + 1}^s b_j a_i}{\prod_{i = 1}^s b_i}.
    \end{split}
    \end{equation}
    Now, we note that we can rewrite the sum and product combination as follows:
    \[
    \sum_{i = 1}^s \prod_{j = i + 1}^s b_j a_i = a_s + \sum_{i = 1}^{s - 1} \prod_{j = i + 1}^{s} b_j a_i.
    \]
    Substituting it into \eqref{eq:condition prob martingale} then yields
    \[
     \expec_n[M_{s + 1} \;|\; M_{s}] \leq \frac{b_s \Psi_s - \sum_{i = 1}^{s - 1} \prod_{j = i + 1}^{s} b_j a_i}{\prod_{i = 1}^s b_i}.
    \]
    Finally, we rewrite the leftover expression by factoring the $b_s$-term and find the desired result
    \[
    \expec_n[M_{s + 1} \;|\; M_{s}] \leq \frac{b_s \left( \Psi_s - \sum_{i = 1}^{s - 1} \prod_{j = i + 1}^{s - 1} b_j a_i \right)}{b_s \prod_{i = 1}^{s - 1} b_i} = M_{s}.
    \]
\end{proof} 

\begin{remark}
    Note that $(M_s)_{s \geq 0}$ is a ``regular'' martingale if and only if $\nu_n(v, w) = c / \mu^\uparrow_n$ for some constant $c$. This is only possible when $\mu^\uparrow_n = \Theta( n^2 )$, which corresponds to a case where we sample edges/arcs uniformly at random. The only model in which this happens, is the classical (directed) Erd\H{o}s-R\'enyi model.
\end{remark}

Having established the super-martingale through Lemma~\ref{lem:error martingale}, we are now in a position to prove Lemma~\ref{lem:ASRG to ESRG}. The proof will mainly consist of deriving a probabilistic bound on $\Xi_n$ by applying the martingale maximal inequality, and then using this to guide the desired edge insensitivity. Before presenting the proof of Lemma~\ref{lem:ASRG to ESRG}, we will formulate and prove one final auxiliary lemma that bounds the complex growth speed of the martingale $M_s$ by a simpler expression (that equals the required edge insensitivity rate in the statement of Lemma~\ref{lem:ASRG to ESRG}).

\begin{lemma}[Martingale growth speed]\label{lem:martingale growth speed}
    Assume that $m_n = o(f(n))$ for some function $f(n)$. We have that
    \[
    \sum_{s = 1}^{m_n} \frac{s}{f(n) - s} \prod_{r = s + 1}^{m_n} \left[1 - \frac{2}{f(n) - r}\right] \leq \frac{ m_n^2}{f(n)}.
    \]
\end{lemma}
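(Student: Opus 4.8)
The plan is to bound each factor in the product by $1$ and each summand crudely, being careful about where $s$ ranges. First I would note that since $m = o(f(n))$, for $n$ large enough we have $f(n) - r > 0$ for all $r \leq m$, so every factor $1 - \tfrac{2}{f(n)-r}$ lies in $(0,1]$ (indeed it is positive once $f(n) - r > 2$, which holds eventually). Hence the product $\prod_{r=s+1}^m \left[1 - \tfrac{2}{f(n)-r}\right] \leq 1$ for every $s$, and we may drop it entirely:
\[
\sum_{s=1}^m \frac{s}{f(n)-s} \prod_{r=s+1}^m\left[1 - \frac{2}{f(n)-r}\right] \;\leq\; \sum_{s=1}^m \frac{s}{f(n)-s}.
\]
Then, since $s \leq m$ for every term in the sum, we have $f(n) - s \geq f(n) - m$, so each summand is at most $\tfrac{s}{f(n)-m}$, giving $\sum_{s=1}^m \tfrac{s}{f(n)-m} = \tfrac{m(m+1)/2}{f(n)-m}$.

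It then remains to check $\tfrac{m(m+1)/2}{f(n)-m} \leq \tfrac{m^2}{f(n)}$, i.e. $f(n)\,(m+1)/2 \leq m\,(f(n)-m)$, i.e. $f(n)(m+1) \leq 2m f(n) - 2m^2$, i.e. $f(n)(m-1) \geq 2m^2$. This is exactly where $m = o(f(n))$ is used: since $m^2 = o(m f(n))$ and $m f(n) - f(n) = f(n)(m-1) \sim m f(n)$, the inequality $f(n)(m-1) \geq 2m^2$ holds for all $n$ large enough. I would present this last step as: for $n$ large, $m/f(n) \leq 1/4$ and $m \geq 2$ (the cases $m \leq 1$ being trivial since then the left side is at most $f(n)^{-1} \leq m^2/f(n)$), whence $2m^2 = 2m \cdot m \leq \tfrac{1}{2} m f(n) \leq f(n)(m-1)$, using $m - 1 \geq m/2$.

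The only genuinely delicate point — the ``main obstacle'' — is making sure the positivity of the product factors and the final constant-chasing are valid \emph{for all $n$ large enough}, as the inequality is asymptotic in nature; the bound as literally stated need not hold for small $n$, but this is consistent with how the lemma is applied (inside $o_\prob$ statements). One should also handle the degenerate regime $m \in \{0,1\}$ separately, where the sum is empty or a single harmless term. I would remark that the crude bound $\prod \leq 1$ loses nothing asymptotically for the purposes of Lemma~\ref{lem:ASRG to ESRG}, since the resulting rate $m^2/f(n)$ is already what appears (with $f(n) = 1/\mu_n^\uparrow$, matching $m^2 \mu_n^\uparrow$) in that lemma's hypotheses.
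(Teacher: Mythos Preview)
Your proof is correct for $m \geq 2$ and takes a genuinely simpler route than the paper. The paper does \emph{not} bound the product by $1$; instead it replaces each factor $1-\tfrac{2}{f(n)-r}$ by the cruder $1-\tfrac{2}{f(n)}$, reverses the sum into a geometric series $\sum_{k=0}^{m-1}(1-2/f(n))^k$, evaluates it, and then applies Bernoulli's inequality to reach (up to a harmless constant) $\tfrac{m^2}{f(n)-m}$, before finishing with the same $f(n)-m\geq f(n)/2$ step you use. Your approach of simply discarding the product and summing $\sum_{s=1}^m s = m(m+1)/2$ is shorter and avoids the geometric-series/Bernoulli detour entirely; the price is the slightly more delicate constant-chasing in your last paragraph, which you carry out correctly.

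One small correction: your treatment of the case $m=1$ is wrong. For $m=1$ the left-hand side is $\tfrac{1}{f(n)-1}$ (the product is empty), which is strictly \emph{larger} than $\tfrac{1}{f(n)}$, so the stated inequality actually fails there. This is not a flaw in your argument so much as in the lemma's statement itself (the paper's proof has the same defect at $m=1$); in the only application, $m_n\to\infty$, so the issue is moot. Just drop your parenthetical claim that $m\leq 1$ is ``trivial'' and say instead that the lemma is understood for $n$ large with $m\geq 2$.
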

\begin{proof}
    For ease of notation we will omit the substript $n$ from $m_n$. First, by noting that $s / (f(n) - s) \leq m / (f(n) - m)$ and that $ 2/(f(n) - r) \geq  2/f(n)$ we can bound the sum as
    \[
     \sum_{s = 1}^m \frac{s}{f(n) - s} \prod_{r = s + 1}^m \left[1 - \frac{2}{f(n) - r}\right] \leq \frac{m}{f(n) - m} \sum_{s = 1}^m \prod_{r = s + 1}^m \left[ 1 - \frac{2}{f(n)} \right].
    \]
    Then, by computing the product and reversing the sum's order, we find a partial sum of the geometric series.
    \begin{align*}
    \frac{m}{f(n) - m} \sum_{s = 1}^m \prod_{r = s + 1}^m &\left[ 1 - \frac{2}{f(n)} \right] \leq  \frac{m}{f(n) - m} \sum_{s = 1}^m \left[  1 - \frac{2}{f(n)} \right]^{m - s},\\
    &= \frac{m}{f(n) - m} \sum_{k = 0}^{m - 1} \left[  1 - \frac{2}{f(n)} \right]^{k} = \frac{m f(n) (1 - (1 - 2/f(n))^m)}{2(f(n) - m)}.
    \end{align*}
    Next, we will apply Bernoulli's inequality in the numerator of the previously derived upper-bound to find an expression close to the desired result.
    \[
    \frac{m f(n) (1 - (1 - 2/f(n))^m)}{2(f(n) - m)} \leq \frac{m^2}{2(f(n) - m)}.
    \]
    Finally, since $m = o(f(n))$ we know that for $n$ large we will have that $f(n) - m \geq f(n)/2$. Thus, we obtain the desired result:
    \[
    \sum_{s = 1}^m \frac{s}{f(n) - s} \prod_{r = s + 1}^m \left[1 - \frac{2}{f(n) - r}\right] \leq  \frac{m^2}{2(f(n) - m)} \leq \frac{m^2}{f(n)}.
    \]
\end{proof}

\begin{proof}[\textbf{Proof of Lemma~\ref{lem:ASRG to ESRG}}]
    First, given a coupled directed and undirected random graph model, we recall that $\Xi_n^{(1)}$ is the number of unordered vertex pairs $v$ and $w$ such that $\{v, w\}$ is present in the undirected model, but both $(v, w)$ and $(w, v)$ is not in the directed model. Similarly, $\Xi_n^{(2)}$ are the number of instances where $(v, w)$ or $(w, v)$ is present in the directed model, but $\{v, w\}$ is not in the undirected model. See Definition~\ref{def:loc coupl err}.
    
    Consider $\ASRG_n(V_n, \nu_n, m)$ and $\ESRG_n(V_n, \nu_n^\circ, m)$ for which we couple the vectors $V_n$ to be equal. The proof boils down to first noting that $\Xi_n^{(2)} = 0$ by Definition~\ref{def:coupling ASRG ESRG}, and then bounding $\Xi_n^{(1)} = \Psi_m$ (cf. Definition~\ref{def:error process}). We analyse $\Psi_m$ through the maximal inequality (see \cite{Baldi2017StochasticExercises}; Theorem 5.3). After we do this, we use the coupling specified in Definition~\ref{def:coupling ASRG ESRG}. We can do this, because Lemma~\ref{lem:ESRG ASRG coupling exists} guarantees Definition~\ref{def:coupling ASRG ESRG} is a correct coupling between $\ASRG_n(V_n, \nu_n, m)$ and $\ESRG_n(V_n, \nu_n^\circ, m)$. The proof will now have the following two steps:
    \begin{enumerate}[label = \textbf{\Roman*.}]
        \item Conditioned on $V_n$, we will bound $\Psi_m$.
        \item We use the resulting probabilistic bound to derive the desired bound on $\Xi_n^{(1)}$ with probability tending to one.
    \end{enumerate}

\paragraph{Step I.} 
Recall the definition of the sequences $(a_i)_{i \geq 1}$, $(b_i)_{i \geq 1}$ and the super-martingale $(M_s)_{s \geq 0}$ from Lemma~\ref{lem:error martingale}. The maximal inequality on $M_s$ shows for any constant $\alpha > 0$ that
\begin{equation}\label{eq:max inequality}
\alpha  \cdot \prob_n\left( \sup_{0 \leq s \leq m} M_s \geq \alpha \right) \leq \expec_n[M_0] + \expec_n[M_m^-],
\end{equation}
where $M_m^- := \max\{ - M_m, 0\}$. Note that $\Psi_s$ is stochastically increasing in $s$ with $\Psi_0 = 0$. Therefore, we can bound $\expec_n[M_0]$ and $\expec_n[M_m^-]$ as follows:
\[
 \expec_n[M_0] + \expec_n[M_m^-] \leq \left( \prod_{s = 1}^m b_s \right)^{-1} \sum_{s = 1}^m a_s \prod_{r = s + 1}^m b_r .
\]
Now, we set $\alpha = 1 /(r_n \prod_{s = 1}^m b_s)$ for some sequence $r_n$. With this, we can find a bound on the growth of $\Psi_s$ using \eqref{eq:max inequality}.
\begin{align*}
\prob_n\left( \Xi_n^{(1)} \geq  r_n^{-1} + \sum_{s = 1}^m a_s \prod_{r = s + 1}^m b_r \right) &\leq \prob_n\left( \sup_{0 \leq s \leq m} M_s \geq  \left( r_n \prod_{s = 1}^m b_s\right)^{-1} \right), \\ &\leq r_n \sum_{s = 1}^m a_s \prod_{r = s + 1}^m b_r \leq r_n m_n^2 \nu_n^\uparrow.
\end{align*}
Here, we used the definition of $M_m$ for the first step and then applied \eqref{eq:max inequality} after dividing both sides by $\alpha$. Also, the final inequality is due to Lemma~\ref{lem:martingale growth speed} when taking $f(n) = 1/\nu_n^\uparrow$. To apply Lemma~\ref{lem:martingale growth speed} we need that $m_n \nu_n^\uparrow \to 0$ in probability. Note that this upper-bound implies that $\prob_n(\Xi_n^{(1)} \geq r_n^{-1} + m_n^2 \nu_n^\uparrow) \leq r_n m_n^2 \nu_n^\uparrow$.

\paragraph{Step II.} Fix a function $\omega(n) \to \infty$ arbitrarily slowly. We will set \[r_n = 2/(m_n^2 \nu_n^\uparrow \omega(n)),\] and note that conditioned on $V_n$ we have that
\[
\prob_n(\Xi_n^{(1)} \geq \omega(n) m_n^2 \nu_n^\uparrow) \leq \prob_n(\Xi_n^{(1)} \geq \tfrac12 \omega(n) m_n^2 \nu_n^\uparrow + m_n^2 \nu_n^\uparrow) = \prob_n(\Xi_n^{(1)} \geq r_n^{-1} + m_n \nu_n^\uparrow).
\]
Now, to show that $\{\Xi_n^{(1)} \leq \omega(n) m_n^2 \nu_n^\uparrow \}$ occurs with probability tending to one, we apply the law of total probability together with the result of Step I and the definition of $r_n$. This shows us
\begin{align*}
\prob(\Xi_n^{(1)} \geq \omega(n) m_n^2 \nu_n^\uparrow ) &= \expec[\prob_n(\Xi_n^{(1)} \geq \omega(n) m_n^2 \nu_n^\uparrow)],\\ &\leq \expec[r_n m_n^2 \nu_n^\uparrow] = \expec\left[\frac{2}{\omega(n)}\right] = \frac{2}{\omega(n)} \to 0.
\end{align*}
\end{proof}

We end this section by proving Lemmas~\ref{lem:monotonicity preservation} through~\ref{lem:mass bound}. These were the lemmas needed to prove Corollary~\ref{cor:CCI to IRG} with Theorem~\ref{thm:direct implies undirect}.

\begin{proof}[\textbf{Proof of Lemma~\ref{lem:monotonicity preservation}}]
    Without loss of generality we will only show that $\mathcal{Q}_n$ being increasing implies $\mathcal{U}^{-1}(\mathcal{Q}_n)$ is increasing too. Consider a digraph $\vec{G}_n \in \mathcal{U}^{-1}(\mathcal{Q}_n)$ and suppose $\vec{G}_n \subseteq \vec{G}_n'$. Because $\vec{G}_n$ is a marked sub-graph of $\vec{G}_n'$ we know that $\vec{G}_n$ has arcs in locations were $\vec{G}_n'$ has arcs too. Therefore, we have $\mathcal{U}(\vec{G}_n) \subseteq \mathcal{U}(\vec{G}_n')$.

    Now, by definition $\vec{G}_n \in \mathcal{U}^{-1}(\mathcal{Q}_n)$ implies that $\mathcal{U}(\vec{G}_n) \in \mathcal{Q}_n$. Since $\mathcal{Q}_n$ is increasing, we have that $\mathcal{U}(\vec{G}_n) \subseteq \mathcal{U}(\vec{G}_n')$ implies $\mathcal{U}(\vec{G}_n') \in \mathcal{Q}_n$ too. However, by definition that entails $\vec{G}_n' \in \mathcal{U}^{-1}(\mathcal{Q}_n)$ too. Therefore, we indeed find that $\mathcal{U}^{-1}(\mathcal{Q}_n)$ is increasing.
\end{proof}

\begin{proof}[\textbf{Proof of Lemma~\ref{lem:kernel bound}}]
	 Recall \eqref{eq:asymp connection number} and Assumption~\ref{ass:finite support}. Set $\lambda^\downarrow := \min_i \lambda_i > 0$ and set $\varrho^\downarrow := \min_j \varrho_j >0 $. Using these definitions, we can bound no matter the outcome of $V_n$:
    \[
    \kappa(t, s) =\mu \sum_{i \in \mathcal{L}} \sum_{j \in \mathcal{R}} \frac{p_{ij} \cdot I(t, i) J(s, j)}{\lambda_i \cdot \varrho_j} \leq \frac{\mu}{\lambda^\downarrow \varrho^\downarrow} \sum_{i \in \mathcal{L}} \sum_{j \in \mathcal{R}} p_{ij} \cdot I(t, i) J(s, j). 
    \]
    Note that the bound given in \eqref{eq:kernel bound undirect} converges to zero. So, every kernel $\kappa_n$ is bounded as well.
\end{proof}

\begin{proof}[\textbf{Proof of Lemma~\ref{lem:mass bound}}]
    Because of Assumption~\ref{ass:finite support} for each $i \in \mathcal{L}$ there will be at least one $k \in \mathcal{S}$ such that $I(k, i) = 1$. A similar thing is true for each $j \in \mathcal{R}$. Now, if an arc is generated in $\texttt{RaCInG}_{n, \mu}(T, C, I, J)$ then it will be generated proportional to $1 / (L_i R_j)$. In other words, the maximum of $\nu_n$ given $V_n$ will be dictated by the minimum of $L_i R_j$. By nothing both $L_i$ and $R_j$ are at their core sets of vertices with specific types drawn from $T$. If we set $q^\downarrow := \min_{t} \prob(T = t)$, then we note that each individual $L_i$ and $R_j$ is stochastically bounded from below by a $\texttt{Bin}(n, q^\downarrow)$ random variable.

    We will now consider the event where one of the values $L_i$ or $R_j$ is ``too small'' and show this cannot happen. Specifically, we are interested in
    \[
    \mathcal{A}_n = \bigcap_{i \in \mathcal{L}} \{L_i \geq q^\downarrow n /2\} \cap \bigcap_{j \in \mathcal{R}} \{R_j \geq q^\downarrow n /2\}.
    \]
    We will show that $\mathcal{A}_n$ happens with probability tending to one on $V_n$, and subsequently that $\nu_n$ is sufficiently bounded when $\mathcal{A}_n$. First, applying a union bound on the compliment of $\mathcal{A}_n$, and applying the aforementioned stochastic domination, gives us
   \[
    \prob(\neg \mathcal{A}_n) \leq  |\mathcal{L}| \cdot |\mathcal{R}| \cdot \prob(\texttt{Bin}(n, q^\downarrow) < q^\downarrow n /2) \leq \prob(|\texttt{Bin}(n, q^\downarrow) - n q^\downarrow| > q^\downarrow n /2).
    \]
    Then, by applying the Chebyshev inequality we find indeed that
    \[
    \prob(\neg \mathcal{A}_n) \leq \frac{\text{Var}(\texttt{Bin}(n, q^\downarrow))}{n^2 (q^\downarrow)^2} = \frac{n q^\downarrow ( 1 - q^\downarrow)}{n^2 (q^\downarrow)^2} \to 0.
    \]
    Thus, $\mathcal{A}_n$ happens with probability tending to one. Finally, for every $V_n$ in $\mathcal{A}_n$ we have for all $i \in \mathcal{L}$ and $j \in \mathcal{R}$ that $L_i, R_j \geq q^\downarrow n / 2$. Therefore, for these vectors $V_n$ we may bound
    \begin{align*}
    \mu_n((v, T_v), (w, T_w)) &= \sum_{i\in\mathcal{L}} \sum_{j \in \mathcal{R}} \frac{\prob(C = (i, j)) \cdot I(T_v, i) J(T_w, j)}{L_i R_j},\\ &\leq  \sum_{i\in\mathcal{L}} \sum_{j \in \mathcal{R}} \frac{\prob(C = (i, j)) \cdot I(T_v, i) J(T_w, j)}{n^2 (q^\downarrow)^2},\\
    &\leq \frac{1}{n^2 (q^\downarrow)^2}  \sum_{i\in\mathcal{L}} \sum_{j \in \mathcal{R}} \prob(C = (i, j)) = \frac{1}{n^2 (q^\downarrow)^2}.
    \end{align*}
\end{proof}

\subsection{Proofs of lemmas in Section~\ref{sec: main result spacing}}
We start by proving Lemma~\ref{lem:correct coupling}. When looking at Definition~\ref{def:coupling racing spacing} we note that $\texttt{SpaCInG}_{n, \mu} \subseteq \texttt{RaCInG}^\uparrow_{n^-, \mu}$ will always occur if $\texttt{SpaCInG}_{n, \mu}$ has more vertices at each spatial location than $\texttt{RaCInG}^\uparrow_{n^-, \mu}$ has vertices with the same spatial types (recall Figure~\ref{fig:coupl idea}). Therefore, to prove the probability in Lemma~\ref{lem:correct coupling} convergence, we need to ensure that $\texttt{RaCInG}_{n^-, \mu}$ always has less vertices with a vertex type $(x, \cdot)$ than $\texttt{SpaCInG}_{n, \mu}$ has vertices at location $x$. This will be proven in the next lemma.
\begin{lemma}[Number of vertices per spatial location]\label{lem:minmax vertices per spot}
    The following two statements are true:
    \begin{enumerate}[label = (\arabic*)]
    \item In $\CCI_{n^+, \mu}$ each vertex type of the form $(\cdot, x)$ for fixed $x \in \mathcal{X}$ occurs at least $n/ |\mathcal{X}| + |\mathcal{X}|$ times.
    \item In $\CCI_{n^-, \mu}$ each vertex type of the form $(\cdot, x)$ for fixed $x \in \mathcal{X}$ occurs at most $n / |\mathcal{X}|$ times.
    \end{enumerate}
\end{lemma}
\begin{remark}
    The additional $|\mathcal{X}|$-term in statement (1) of Lemma~\ref{lem:minmax vertices per spot} is needed to compensate for the fact that the \emph{final} location $y$ in $\texttt{SpaCInG}$ (cf. Definition~\ref{def:SpaCInG algorithm}) can have at most $|\mathcal{X}|$ vertices more than the others.
\end{remark}
\begin{proof}
    We will only prove the first statement of the lemma, becuase the proof of the second is analogous. Throughout the proof, we will denote by $N_x^+$ the number of vertices in $\CCI_{n^+, \mu}$ with type $(\cdot, x)$ for fixed $x \in \mathcal{X}$. We start by noting that $N_x^+ \sim \texttt{Bin}(n^+, 1 / |\mathcal{X}|)$. Therefore, its expected value is given by $n^+ / |\mathcal{X}|$. Thus, by the Chernoff bound for binomial random variables (see e.g. Theorem 2.21 in \cite{vanderHofstad2016RandomNetworks1}) we have for any $\omega(n) \to \infty$ such that $\omega(n) < \sqrt{n}$ and some $c > 0$ that
    \begin{equation}\label{eq:spatial loc vertex type count spacing}
    \begin{split}
        \prob\left(\left|N_x^+ - \frac{n^+}{|\mathcal{X}|} \right| > \omega(n) \sqrt{n}  \right) &\leq 2 \exp\left( - \frac{\omega(n)^2 n}{2 ( n^+ / |\mathcal{X}| + \tfrac13 \omega(n) \sqrt{n})} \right), \\  &\leq \exp\left(-c\omega(n)^2\right).
    \end{split}
    \end{equation}

    To prove the (first statement of the) lemma, we need to show that the probability of the following event converges to zero:
    \[\bigcup_{x \in \mathcal{X}} \left\{ N_x^+ < \frac{n}{|\mathcal{X}|} + |\mathcal{X}|\right\}.\]
    To compute the probability of this event, we first apply the union bound to bound this probability as
    \begin{equation}\label{eq:union bound Nx}
         \prob\left( \bigcup_{x \in \mathcal{X}} \left\{ N_x^+ < \frac{n}{|\mathcal{X}|} + |\mathcal{X}|\right\} \right) \leq \sum_{x \in \mathcal{X}} \prob\left(N_x^+ < \frac{n}{|\mathcal{X}|} + |\mathcal{X}|\right).
    \end{equation}
    Now, we rewrite and bound the probability inside the sum as
    \begin{align*}
    \prob\left(N_x^+ < \frac{n}{|\mathcal{X}|} + |\mathcal{X}|\right) &= \prob\left(N_x^+ - \frac{n}{|\mathcal{X}|} -  \frac{\log(n) \sqrt{n}}{|\mathcal{X}|}< |\mathcal{X}| -  \frac{\log(n) \sqrt{n}}{|\mathcal{X}|}\right),\\ 
    &\leq \prob\left(\left| N_x^+ - \frac{n}{|\mathcal{X}|} -  \frac{\log(n) \sqrt{n}}{|\mathcal{X}|} \right| \geq   \frac{\log(n) \sqrt{n}}{|\mathcal{X}|} - |\mathcal{X}|\right), \\ 
    &\leq \prob\left(\left| N_x^+ - \frac{n^+}{|\mathcal{X}|}  \right| \geq  \frac{ \log(n) \sqrt{n}}{2 | \mathcal{X}|}\right).
    \end{align*}
    Next, we apply \eqref{eq:spatial loc vertex type count spacing} with $\omega(n) = \log(n)/(2 |\mathcal{X}|)$ to conclude there exits a constant $c'>0$ such that
    \[
    \prob\left(\left| N_x^+ - \frac{n^+}{|\mathcal{X}|}  \right| \geq  \frac{ \log(n) \sqrt{n}}{2 | \mathcal{X}|}\right) \leq \exp(- c' \log(n)^2).
    \]
    Substituting this back in \eqref{eq:union bound Nx} and noting that the upper bound is independent of $x$ yields the desired result, namely
    \[
    \prob\left( \bigcup_{x \in \mathcal{X}} \left\{ N_x^+ < \frac{n}{|\mathcal{X}|} + |\mathcal{X}|\right\} \right) \leq |\mathcal{X} |  \exp(- c' \log(n)^2) \to 0.
    \]
\end{proof}
To prove Lemma~\ref{lem:correct coupling} we also need to show that Definition~\ref{def:coupling racing spacing} is a valid coupling between RaCInG and SpaCInG. For this we will need a central lemma that provides the maximal coupling between two discrete random variables $X$ and $Y$ such that $\supp(X) \subseteq \supp(Y)$. This lemma will show that Step 3 in Definition~\ref{def:coupling racing spacing} is the best one can do to couple $\texttt{SpaCInG}_{n, \mu}$ and $\CCI_{n^-, \mu}$. We will first state and prove this central lemma, after which we will prove Lemma~\ref{lem:correct coupling}.
\begin{lemma}[Maximal coupling of overlapping random variables]\label{lem:max coupling overlap}
    Let $X$ and $Y$ be two discrete random variables such that $\supp(X) \subseteq \supp(Y)$. Moreover, assume for all $x \in \supp(X)$ that $\prob(X = x) \geq \prob(Y = x)$. Then, the following coupling is the maximal coupling between $X$ and $Y$:
    \begin{enumerate}[label = Step \Roman*.]
        \item Sample the value of $Y$. If $Y \in \supp(X)$, then set $X = Y$.
        \item If $Y \notin \supp(X)$, then sample the realisation of $X$ independently from $Y$ with its probability being proportional to $\prob(X = x) - \prob(Y = x)$.
    \end{enumerate}
\end{lemma}
\begin{proof}
    Set $p_x := \prob(X = x)$ and $q_x := \prob(Y = x)$. The maximal coupling theorem (Theorem 2.9 in \cite{vanderHofstad2016RandomNetworks1}) stipulates that the best coupling between $X$ and $Y$ satisfies
    \begin{align*}
    \prob(X \neq Y) &= \tfrac12 \sum_{x \in \supp(Y)} |p_x - q_x|,\\
    &= \tfrac12 \sum_{x \in \supp(X)}  |p_x - q_x| + \tfrac12 \sum_{x \notin  \supp(X)} |p_x - q_x|,\\
    &= \tfrac12 \sum_{x \in \supp(X)} (p_x - q_x) + \tfrac12 \sum_{x \notin  \supp(X)} q_x,\\ 
    \end{align*}
    Here, the final equality used that $p_x > q_x$ for $x \in \supp(X)$ and that $p_x = 0$ when $x \notin \supp(X)$. By noting that $\sum_{x \in \supp(X)} p_x = 1$, we can calculate $\prob(X \neq Y)$ under the maximal coupling to be
    \begin{align*}
       \prob(X \neq Y) &= \tfrac12 \left(1 - \sum_{x \in \supp(X)}q_x \right) + \tfrac12 \sum_{x \notin  \supp(X)} q_x,\\
       &= \tfrac12 \left(1 - \prob(Y \in \supp(X)) + \prob(Y \notin \supp(X))\right) = \prob(Y \notin \supp(X)).
    \end{align*}
    Note that this probability is equal to the probability that Step II is executed in our coupling, since when $X$ is sampled independently from $Y$, it is certain that $Y \notin \supp(X)$. Thus, the algorithm to sample $X$ and $Y$ from the theorem has the correct value of $\prob(X \neq Y)$ to be the maximum coupling. What remains to be shown, is that the described algorithm to sample $X$ and $Y$ is indeed a coupling.

    First, note that the marginal of $Y$ indeed has the correct distribution, since the value of $Y$ will always be sampled according to its distribution. To find the marginal of $X$ under the coupling, we condition on the value of $Y$. The law of total probability gives us
    \begin{align*}
    \prob(X = x) =  &\sum_{y \in \supp(X)} \prob(X = x \;|\; Y = y)\prob(Y = y)\\ &+ \sum_{y \notin \supp(X)} \prob(X = x \;|\; Y = y)\prob(Y = y).
    \end{align*}
    By noting that $y \in \supp(X)$ means that $y = x$ with probability $1$, due to Step I of the algorithm, we find
    \begin{align*}
        \prob(X = x) = \prob(X = x \;|\; Y = x)\prob(Y = x) + \sum_{y \notin \supp(X)} \prob(X = x \;|\; Y = y)\prob(Y = y).
    \end{align*}
    Also, note that for the same reason $\prob(X = x \;|\; Y = x) = 1$. Thus, we can further derive
    \begin{equation} \label{eq:law total prob coupling}
       \prob(X = x) = \prob(Y = x) + \sum_{y \notin \supp(X)} \prob(X = x \;|\; Y = y)\prob(Y = y).
    \end{equation}
    Recall that $\prob(X = x \;|\; Y = y)$ is independent of $y$ and proportional to $p_x - q_x$ when $y \notin \supp(X)$. Thus, we find that
    \begin{align*}
    \prob(X = x \;|\; Y = y) &= \frac{p_x - q_x}{\sum_{x \in \supp(X)}(p_x - q_x)},\\ &= \frac{p_x - q_x}{1 - \prob(Y \in \supp(X))} = \frac{p_x - q_x}{\prob(Y \notin \supp(X))}.
    \end{align*}
    Substituting everything into \eqref{eq:law total prob coupling} finally shows that
    \begin{align*}
    \prob(X = x ) &= q_x + \frac{p_x - q_x}{\prob(Y \notin \supp(X))} \sum_{y \notin \supp(X)} q_y,\\ &= q_x + \frac{(p_x - q_x) \prob(Y \notin \supp(X))}{\prob(Y \notin \supp(X))}  = p_x.
    \end{align*}
    Therefore, the described procedure is a correct coupling, and by the first part of the proof even the maximal coupling.
\end{proof}

\begin{proof}[\textbf{Proof of Lemma~\ref{lem:correct coupling}}]
We need to show that (1) the algorithm of Definition~\ref{def:coupling racing spacing} is a valid coupling, and (2) that this coupling outputs $\texttt{SpaCInG}_{n, \mu} \subseteq \texttt{RaCInG}_{n^-, \mu}^\uparrow$ with probability tending to one. We will tackle these parts separately.

\paragraph{Valid coupling.} We start by noticing that we automatically have a valid coupling if not all spatial types in $\texttt{RaCInG}_{n^-, \mu}$ can be linked to a vertex in  $\texttt{SpaCInG}_{n, \mu}$ (see Step 1 in Definition~\ref{def:coupling racing spacing}). This is, because in this situation both models are generated independently, meaning they are coupled under the independent coupling. Hence, we only need to show the coupling is valid in case all vertices in $\texttt{RaCInG}_{n^-, \mu}$ can be linked to a vertex in $\texttt{SpaCInG}_{n, \mu}$.

If all vertices can be linked, we note that the vertex generation procedure (Step 1 and 2 in Definition~\ref{def:coupling racing spacing}) will always ensure that the marginal vertex generation procedures are respected. This is, because in $\texttt{RaCInG}_{n^-, \mu}$ each vertex is first assigned an independent spatial type $x \in \mathcal{X}$ with probability $1 / |\mathcal{X}|$ after which both $\texttt{RaCInG}_{n^-, \mu}$ and $\texttt{SpaCInG}_{n, \mu}$ generate their non-spatial vertex type $t \in \mathcal{S}$ independently given $x$ with probability $q_t^x$.

Next, when placing the arcs in both models (Step 3 of Definition~\ref{def:coupling racing spacing}), we note that each arc can be placed between the same subset of vertex types from $\mathcal{S} \times \mathcal{X}$, because the generated arc type and the arc placement rules will always be the same in both models. However, when vertices are linked in Step 1 and 2 of Definition~\ref{def:coupling racing spacing}, this means that the number of available vertices in $\texttt{SpaCInG}_{n, \mu}$ to place an arc in between is always less than in $\texttt{RaCInG}_{n^-, \mu}$. Since one of these admissible vertex pairs is chosen uniformly at random in both models, Lemma~\ref{lem:max coupling overlap} ensures that the marginal arc placement processes in Step 3 of Definition~\ref{def:coupling racing spacing} coincide with the $\texttt{SpaCInG}_{n, \mu}$ and $\texttt{RaCInG}_{n^-, \mu}$ algorithms, respectively.

Finally, Step 4 and 5 simply add vertices and arcs to $\texttt{RaCInG}_{n^-, \mu}$ to ensure that both models have the same vertices and the same arcs incident to them. This matches the description in Definition~\ref{def: event insensitivity} and hence we may conclude that the coupling is valid.

\paragraph{Sub-graph is often outputted.} Now we know Definition~\ref{def:coupling racing spacing} is a valid coupling, we investigate when the event
\begin{equation}\label{eq:subset happens}
 \left\{ \texttt{SpaCInG}_{n, \mu} \subseteq \CCI_{n^-, \mu}^\uparrow \right\},
\end{equation}
occurs under the coupling. For this we recall in Step 1 of the coupling that two things might occur. It can happen (1) for all spatial locations that $\texttt{SpaCInG}_{n, \mu}$ has more vertices at said location than $\texttt{RaCInG}_{n^-, \mu}$, or (2) there exists at least one spatial location $x \in \mathcal{X}$ for which $\texttt{RaCInG}_{n^-, \mu}$ has more vertices at that location than $\texttt{SpaCInG}_{n, \mu}$.

If the first option occurs, then all vertices in $\texttt{RaCInG}_{n^-, \mu}$ can be linked to a vertex in $\texttt{SpaCInG}_{n, \mu}$ and Step 2--5 of the coupling can be executed. If these steps are executed, then note from their description that \eqref{eq:subset happens} automatically happens. However, if the second option occurs, $\texttt{RaCInG}_{n^-, \mu}$ and $\texttt{SpaCInG}_{n, \mu}$ are generated independently, meaning that \eqref{eq:subset happens} might not occur. Therefore, we can lower bound the probability of \eqref{eq:subset happens} by the probability that $\texttt{RaCInG}_{n^-, \mu}$ has less vertices at all locations than $\texttt{SpaCInG}_{n, \mu}$. Hence, if we set $N_x$ to denote the number of vertices with type $(\cdot, x) \in \mathcal{S} \times \mathcal{X}$, then we can investigate the event
\[
 \mathcal{T}_n := \bigcap_{x \in \mathbb{X}} \{N_x \leq n / |\mathcal{X}|\}.
\]

By Lemma~\ref{lem:minmax vertices per spot} we note that $\mathcal{T}_n$ occurs with probability tending to one. Therefore, we may conclude
\begin{align*}
    \prob(\texttt{SpaCInG}_{n, \mu} \subseteq \CCI_{n^-, \mu}^\uparrow) &\geq \prob(\texttt{SpaCInG}_{n, \mu} \subseteq \CCI_{n^-, \mu}^\uparrow \;|\; \mathcal{T}_n) \prob(\mathcal{T}_n),\\
    &= 1 \cdot \left( 1 - o(1) \right) \to 1.
    \end{align*}
\end{proof}

We end this section by proving Lemma~\ref{lem:extra arcs}.

\begin{proof}[\textbf{Proof of Lemma~\ref{lem:extra arcs}}]
    From Lemma~\ref{lem:correct coupling} we know that there exists a coupling with probability tending to one such that all vertices in $\CCI_{n^-, \mu}$ have a linked vertex in $\texttt{SpaCInG}_{n, \mu}$ with the same type. To monitor how different arc placement probabilities in the two models are (cf. Step 3 in Definition~\ref{def:coupling racing spacing}), we need to keep track of the number of vertices that are not overlapping between $\texttt{SpaCInG}_{n, \mu}$ and $\CCI_{n^-, \mu}$. To this end, we define $N_x$ to be the number of vertices in $\CCI_{n^-, \mu}$ that have a type of the form $(\cdot, x)$ with $x \in \mathcal{X}$ fixed, and set the random variable
    \[
    N_x^\lr := \max_{0 \leq i \leq |\mathcal{X}|}\left\{ \left|\frac{n}{|\mathcal{X}|} + i - N_x \right|  \right\}.
    \]
    Note the maximum is required due to the \emph{final} location in $\texttt{SpaCInG}_{n, \mu}$ where up to $|\mathcal{X}|$ fewer vertices might be present. The proof will now consist of three steps:
    \begin{enumerate}[label = \Roman*.]
        \item We will bound the value of $N^\lr_x$ for all spatial locations $x \in \mathcal{X}$ simultaneously.
        \item We will use Step I to find an upper-bound on the probability that a fixed arc is placed between two vertices in $\texttt{SpaCInG}_{n, \mu}$ with given locations and types of which at least one does not exist in $\CCI_{n^-, \mu}$.
        \item We will use the probability in Step II to bound $A_n$.
    \end{enumerate}
    
    \paragraph{Step I.} We can conclude using the union bound that
    \begin{equation}\label{eq:sum Nlr bound}
            \begin{aligned}
   & \prob\left(  \bigcup_{x \in \mathcal{X}} \left\{N^{\lr}_x > \frac{2 \log(n) \sqrt{n}}{|\mathcal{X}|}   \right\} \right)
    \leq \sum_{x \in  \mathcal{X}} \prob\left( \max_{0 \leq i \leq |\mathcal{X}|}\left|N_x -  \frac{n}{|\mathcal{X}|} -  i \right| > \frac{2 \log(n) \sqrt{n}}{|\mathcal{X}|} \right).
    \end{aligned}
    \end{equation}
    Since $|\mathcal{X}| < \infty$, it suffices to prove that the probability inside the sum is $o(1)$. Next, we bound the probabilities inside the sum by splitting them up in their positive and negative part. By taking $i = 0$ for the positive part and $i = |\mathcal{X}|$ for the negative part, the probability inside the sum can be upper-bounded by
    \begin{equation} \label{eq:max probabilities spacing bound}
             \prob\left(N_x - \frac{n}{|\mathcal{X}|} > \frac{2 \log(n) \sqrt{n}}{|\mathcal{X}|} \right) + \prob\left(\frac{n}{|\mathcal{X}|} + |\mathcal{X}| - N_x > \frac{2 \log(n) \sqrt{n}}{|\mathcal{X}|}\right).
    \end{equation}
    Rewriting the previous upper-bound shows that it is equal to
    \[
    \prob\left(N_x  > \frac{n +2 \log(n) \sqrt{n}}{|\mathcal{X}|} \right) + \prob\left(\frac{n - \log(n)\sqrt{n}}{|\mathcal{X}|} - N_x > \frac{ \log(n) \sqrt{n}}{|\mathcal{X}|} - |\mathcal{X}|\right).
    \]
    From Lemma~\ref{lem:minmax vertices per spot} we can conclude that the first of the two probabilities converges to zero. For the second one, we first bound it as
    \[
    \prob\left(\frac{n - \log(n)\sqrt{n}}{|\mathcal{X}|} - N_x > \frac{ \log(n) \sqrt{n}}{|\mathcal{X}|} - |\mathcal{X}|\right) \leq \prob\left(\left|N_x - \frac{n^-}{|\mathcal{X}|} \right|> \frac{ \log(n) \sqrt{n}}{2|\mathcal{X}|} \right).
    \]
    From the Chernoff bound for binomial random variables we may conclude that this upper-bound converges to zero. Recall also \eqref{eq:spatial loc vertex type count spacing}.    Thus, we may conclude for all $x \in \mathcal{X}$ simultaneously that there exists an event -- which we will denote by $\mathcal{V}_n^\lr$ -- on which $N_x^\lr \leq 2 \log(n) \sqrt{n} / |\mathcal{X}|$ with probability tending to one. 
    
    \paragraph{Step II.} We will use the result of Step I to bound $A_n$. For this, we first calculate the probability that a fixed arc $a$ connects to at least one of the vertices that appears in $\texttt{SpaCInG}_{n, \mu}$, but not in $\CCI_{n^-, \mu}$. We denote by $\mathcal{E}_{ts}^{xy}$ the event that the out-part of arc $a$ connects to one of the extra vertices at location $x$ with type $t$ in $\texttt{SpaCInG}_{n, \mu}$ or that the in-part of $a$ connects to one of the extra vertices at location $y$ with with type $s$. If we set $C_a$ to be the colour of the arc, then the probability we want to compute is (cf. Step 3 in Definition~\ref{def:SpaCInG algorithm} and Notation~\ref{not:spacing mathematical notation})
    \begin{equation}\label{eq: arc excess prob fixed types}
    \prob(\mathcal{E}_{ts}^{xy}) = \sum_{i \in \mathcal{C}} \sum_{j \in \mathcal{C}} \prob\left(  \mathcal{E}_{ts}^{xy} \;|\; C_a = (x, i, y, j) \right) p_{ij}^{xy}.
    \end{equation}
    Denote by $N_x^t$ the number of vertices at location $x \in \mathcal{X}$ with type $t \in \mathcal{S}$ in $\CCI_{n^-, \mu}$. Given the realisation of the first step of Definition~\ref{def:coupling racing spacing}, the probability of the event $\mathcal{E}_{ts}^{xy}$ conditioned on $C_a$ is upper-bounded by
    \begin{equation}\label{eq:missmatch upper bound probability}
    \frac{N_x^\lr I(t, i)}{\sum_{k \in \mathcal{S}}N^k_x I(k, i)} + \frac{N_y^\lr J(s, j)}{\sum_{k \in \mathcal{S}}N^k_y J(k, j)}.
    \end{equation}
    We now seek to control all the random variables that appear in this expression. Note that $N_x^\lr$ and $N_y^\lr$ are controlled by the event $\mathcal{V}_n^\lr$ from the conclusion of Step I. We can control all random variables $N_x^k$ with a reasoning analogous to \eqref{eq:spatial loc vertex type count spacing}. This would show that
    \[
    \prob\left(\left|N_x^k - \frac{n q_x^k}{|\mathcal{X}|} \right| > \log(n) \sqrt{n}  \right) \to 0.
    \]
    If we define
    \[
    \mathcal{V}_n^{(x,k)} := \left\{ \left|N_x^k - \frac{n q_x^k}{|\mathcal{X}|} \right| \leq \log(n) \sqrt{n} \right\},
    \]
    then we know that
    \[
    \mathcal{W}_n := \left( \bigcap_{x \in \mathcal{X}} \bigcap_{k \in \mathcal{S}} \mathcal{V}_n^{(x, k)} \right) \cap \mathcal{V}_n^\lr,
    \]
    occurs with probability tending to one, because $\mathcal{X}$ and $\mathcal{S}$ are finite sets (cf. Assumption~\ref{ass:SpaCInG}). Conditioned on $\mathcal{W}_n$ we can upper bound \eqref{eq:missmatch upper bound probability} by
    \begin{equation}\label{eq:upper excess prob}
        \frac{2 \log(n) |\mathcal{X}|^{-1} \sqrt{n} \cdot I(t, i)}{\sum_{k \in \mathcal{S}}(q_k^x |\mathcal{X}|^{-1} n - \log(n)  \sqrt{n}) I(k,i)} + \frac{2 \log(n) |\mathcal{X}|^{-1} \sqrt{n} \cdot J(s, j)}{\sum_{k \in \mathcal{S}}(q_k^y |\mathcal{X}|^{-1} n - \log(n) \sqrt{n}) J(k,j)}.
    \end{equation}
    We will now use a Taylor series approximation to bound these leftover fractions. We will only explicitly bound the first. The bound for the second is similar. In this calculation we first set
    \[
    \lambda^\downarrow := \min_{(i, x) \in \mathcal{C}^+} \lambda_i^x, \text{ and } \varrho^\downarrow := \min_{(j, y) \in \mathcal{C}^+} \varrho_j^y,
    \]
    where $\lambda_i^x$ and $\varrho^y_j$ are defined in \eqref{subeq:newlamba} and \eqref{subeq:newrho}. Then, we can bound
    \begin{align*}
   \frac{2 \log(n) |\mathcal{X}|^{-1} \sqrt{n} \cdot I(t, i)}{\sum_{k \in \mathcal{S}}(q_k^x |\mathcal{X}|^{-1} n - \log(n) \sqrt{n}) I(k,i)} &\leq \frac{2 \log(n)   / (\lambda_i^x \sqrt{n})  \cdot I(t, i)) \cdot I(t, i)}{1 - |\mathcal{X}| |\mathcal{S} | \log(n) / (\lambda^\downarrow \sqrt{n})},\\
   &\leq \frac{2 \log(n)}{\lambda^\downarrow \sqrt{n}} \left( 1 + \mathcal{O}\left( \frac{|\mathcal{X}| |\mathcal{S} | \log(n)}{\lambda^\downarrow \sqrt{n}} \right) \right).
    \end{align*}
    Thus, if we substitute this into \eqref{eq:upper excess prob}, then we find that it is bounded by
    \[
    \frac{4 \log(n) (1 + o(1))}{(\lambda^\downarrow \wedge \varrho^\downarrow)\sqrt{n}}.
    \]
    When we substitute this back into \eqref{eq: arc excess prob fixed types}, noting that the arc colour sampling procedure is independent from the vertex sampling procedure (cf. Definition~\ref{def:SpaCInG algorithm}), then we find our desired bound for this step:
    \begin{align*}
         \prob(\mathcal{E}_{ts}^{xy}) &=  \prob(\mathcal{E}_{ts}^{xy} \;|\; \mathcal{W}_n) + o(1),\\
         &= \sum_{i \in \mathcal{C}} \sum_{j \in \mathcal{C}} \prob\left(  \mathcal{E}_{ts}^{xy} \;|\; \mathcal{W}_n \cap C_a = (x, i, y, j) \right) p_{ij}^{xy},\\
         &\leq \sum_{i \in \mathcal{C}} \sum_{j \in \mathcal{C}}\frac{4 \log(n) (1 + o(1))}{(\lambda^\downarrow \wedge \varrho^\downarrow)\sqrt{n}} \cdot p_{ij}^{xy} \leq  \frac{5 \log(n)}{(\lambda^\downarrow \wedge \varrho^\downarrow)\sqrt{n}}
    \end{align*}
   
    \paragraph{Step III.} In Step II we bounded the probability that a fixed arc is generated in $\texttt{SpaCInG}_{n, \mu}$ from a vertex with type $(t, x) \in \mathcal{S}^+$ to a vertex with type $(s, y) \in \mathcal{S}^+$ of which at least one is not linked to any of the vertices in $\CCI_{n^-, \mu}$. From this, we may define the event $\mathcal{E}$ that a fixed arc $a$ is placed incident to at least one vertex in $\texttt{SpaCInG}_{n, \mu}$ that is not linked to another vertex in $\CCI_{n^-, \mu}$ (regardless of the types of the vertices). Using the previous step, we can write
    \[
    \mathcal{E} = \bigcup_{x \in \mathcal{X}} \bigcup_{y \in \mathcal{X}} \bigcup_{t \in \mathcal{S}} \bigcup_{s \in \mathcal{S}} \mathcal{E}_{ts}^{xy},
    \]
    and hence bound, using Step II, the union bound and Assumption~\ref{ass:SpaCInG} that
    \begin{equation}\label{eq:probbound E}
            \prob(\mathcal{E}) \leq \frac{5|\mathcal{X}|^2 |\mathcal{S}|^2 \log(n)}{(\lambda^\downarrow \wedge \varrho^\downarrow) \sqrt{n}}.
    \end{equation}
    Now we know this probability, we also know that $A_n \sim \texttt{Bin}(\lfloor \mu n \rfloor, \prob(\mathcal{E}))$, since all arcs are generated and placed interdependently in $\texttt{SpaCInG}_{n, \mu}$, and since self-loops and multi-arcs are allowed.  Using \eqref{eq:probbound E} we can stochastically dominate $A_n \preceq \texttt{Bin}(\mu n, 5 |\mathcal{X}|^2 |\mathcal{S}|^2 \log(n) / ( \lambda^\downarrow \wedge \varrho^\downarrow \cdot \sqrt{n}))$. We finally use the Chernoff bound for binomial random variables on $A_n$ to find the desired result. For some $c > 0$ we have
    \begin{align*}
        \prob(A_n \geq c \log(n) \sqrt{n} + \log(n)^2 \sqrt[4]{n}) &\leq \exp\left( - \frac{\log(n)^4 \sqrt{n}}{2 (c \log(n) \sqrt{n} + \log(n)^2 \sqrt[4]{n})} \right), \\ &\leq \exp\left(- \log(n)^2 \right) \to 0.
    \end{align*}
    The statement of the lemma follows, since $c \log(n) \sqrt{n} + \log(n)^2 \sqrt[4]{n} \leq c' \log(n) \sqrt{n}$ for some new $c' > 0$.
\end{proof}

\subsection{Proofs of Propositions}\label{sec:proofs_propositions}
In this section we will first use Corollary~\ref{cor:CCI to IRG} to prove Proposition~\ref{prop:direct racing}. Heuristically, we will prove the proposition by first using the second moment method to show the desired convergence in the inhomogeneous random graph model. Then, we show that Corollary~\ref{cor:CCI to IRG} can be applied, due to the fact that adding some edges will not change the limiting probability.
\begin{proof}[\textbf{Proof of Proposition~\ref{prop:direct racing}}]
    Set $\alpha := (\kappa(t, s) + \kappa(s, t))q_tq_s$. For all $\varepsilon > 0$ we consider the event $\mathcal{Q}_n^\pm(\varepsilon) := \{E_n^{ts} \geq (\alpha \pm \varepsilon)n\}$. Note that this event is increasing (cf. Definition~\ref{def:monotone events}). To prove the proposition we will do two things:
    \begin{enumerate}[label = \textbf{\Roman*.}]
        \item We show that $E_n^{ts} / n \to \alpha$ in probability for $\IRG_n(T, \kappa_n^\circ)$ for all $\kappa_n$ satisfying \eqref{eq:kernel bound undirect}.
        \item We show that $\mathcal{Q}_n^\pm$ is insensitive for decrease at some rate $\omega(n) \to \infty$ in $\texttt{IRG}_n(T, \kappa_n^\circ)$ for all $\kappa_n$ satisfying \eqref{eq:kernel bound undirect}, and that $\mathcal{U}^{-1}(\mathcal{Q}_n)$ is insensitive for increase in $\texttt{RaCInG}_{n, \mu}(T, C, I, J)$ at the same rate $\omega(n)$.
    \end{enumerate}
    Together, I and II will imply through Corollary~\ref{cor:CCI to IRG} that $\prob(\ESRG_n(V_n, \nu_n^\circ, \lfloor \mu n \rfloor)  \in \mathcal{Q}_n^+) \to 0$ and $\prob(\ESRG_n(V_n, \nu_n^\circ, \lfloor \mu n \rfloor)  \in \mathcal{Q}_n^-) \to 1$, implying that $E_n^{ts} / n \to \alpha$ in distribution for $\ESRG_n(V_n, \nu_n^\circ, \lfloor \mu n \rfloor)$. Because $\alpha$ is a constant, the desired result would follow. We now prove the two leftover steps.

    \paragraph{Step I -- first moment.} Fix a kernel $\kappa_n^\circ$ satisfying \eqref{eq:kernel bound undirect} and set $I_{vw}$ to be the indicator that edge $\{v, w\}$ is between a vertex with type $t$ and a vertex with type $s$. We can write
    \[
    E_n^{ts} = \sum_{v = 1}^n \sum_{w < v} I_{vw}.
    \]
    Using homogeneity of the model, we can calculate the expected value to be
    \begin{equation}\label{eq:first moment undirected edges}
    \expec[E_n^{ts}] = \frac{n (n - 1)}{2} \prob(I_{12} = 1).
    \end{equation}
    To compute the leftover probability we condition on the vertex types $T_1$ and $T_2$. Note that we can create a valid edge if $\{T_1 = t\}\cap\{T_2 = s\}$ occurs or when $\{T_1 = s\}\cap\{T_2 = t\}$ occurs. Therefore, using the definition of the IRG model, we get
    \begin{align*}
       	\prob(I_{12} = 1) 
       	&= \prob(I_{12} = 1 \mid T_1 = t, T_2 = s)q_tq_s + \prob(I_{12} = 1 \mid T_1 = s, T_2 = t)q_sq_t,\\
       	&= \frac{\kappa_n^\circ(t, s) q_t q_s + \kappa_n^\circ(s, t) q_s q_t}{n} = \frac{2 \kappa_n^\circ(t, s) q_tq_s}{n}.
    \end{align*}
    Now, we use \eqref{eq:kernel bound undirect} to conclude
    \[
    \prob(I_{12} = 1) = \frac{2(\kappa(t, s) + \kappa(s, t))q_t q_s}{n} + \mathcal{O}(n^{-11/10}).
    \]
    Substituting this back into \eqref{eq:first moment undirected edges} shows
    \[
    \expec[E_n^{ts}/n] = \frac{n-1}{2} \cdot \left( \frac{2(\kappa(t, s) + \kappa(s, t))q_t q_s}{n} + \mathcal{O}(n^{-11/10}) \right) \to \alpha.
    \]

    \paragraph{Step I -- second moment.} Using the previously defined sum of indicators, we can write the second moment as
    \begin{equation}\label{eq:second moment edges undirected}
            \expec[(E_n^{ts}/n)^2] = \frac1{n^2} \sum_{v_1 =1}^n \sum_{w_1 < v_1} \sum_{v_2 = 1}^n \sum_{w_2 < v_2} \prob(I_{v_1w_1} = 1, I_{v_2 w_2} = 1).
    \end{equation}
    We note in this expression that the values of $v_i$ and $w_i$ cannot be the same. However, the values may be the same in other cases. Thus, we will determine the second moment by considering these separate cases where certain indices in \eqref{eq:second moment edges undirected} overlap. In every case, we will identify the number of terms in the sum that have this overlap, and (an upper bound on) the probabilities inside the sum under the given overlap. 
    
    First, we consider the case where all four indices are distinct. Then, due to independence, we find using the first moment calculation that the contribution of this case to \eqref{eq:second moment edges undirected} equals
    \[
    \frac1{n^2} \cdot \underbrace{n(n-1)(n-2)(n-3)}_{\text{Number of terms without index overlap}} \cdot \underbrace{\prob(I_{12} = 1) \prob(I_{34} = 1)}_{\text{Probability without index overlap}} \to \alpha^2.
    \] 
    
    Next, when one index overlaps the sum in \eqref{eq:second moment edges undirected} will have $\Theta(n^3)$ terms. At the same time, the two indicators in the probability will still be conditionally independent given the vertex types. Because there are only a finite about of vertex types (see Assumption~\ref{ass:finite support}), this implies the probability in \eqref{eq:second moment edges undirected} will have a contribution of $\mathcal{O}(n^{-2})$. This means that the total contribution of the case where one index overlaps to \eqref{eq:second moment edges undirected} equals
    \[
        \frac1{n^2} \cdot \Theta(n^3) \cdot \mathcal{O}(n^{-2}) \to 0.
    \]
    
    Similarly, when two indices overlap there are $\Theta(n^2)$ terms in the sum for which this is the case. At the same time, the contribution of the probability is $\mathcal{O}(n^{-1})$ for each term, because only one edge is involved (the two indicators are the same). Therefore, the contribution of this case to \eqref{eq:second moment edges undirected} equals
    \[
        \frac1{n^2} \cdot \Theta(n^2) \cdot \mathcal{O}(n^{-1}) \to 0.
    \]
    
    Taken all together, we find that $\expec[(E_n^{ts}/n)^2] \to \alpha^2$, meaning that $\text{Var}(E_n^{ts}/n) \to 0$. From this we may conclude that indeed $E_n^{ts}/n \to \alpha$ in probability for $\IRG_n(T, \kappa_n^\circ)$.

    \paragraph{Step II.} First to show insensitivity for decrease in $\IRG_n(T, \kappa^\circ_n)$ we recall from Step I that $E_n^{ts}/n \to \alpha$ in probability. Therefore, the probability of $\{E_n^{ts} \geq (\alpha \pm \varepsilon - \varepsilon')n\}$ will be asymptotically the same as the probability of $\mathcal{Q}_n^\pm(\varepsilon)$ for some $\varepsilon' > 0$. This means that removing $\varepsilon' n$ edges from $\IRG_n(T, \kappa^\circ_n)$ does not influence the asymptotic probability. Therefore, if we pick the rate $\omega(n)$ such that $\omega(n)/n \to 0$, then indeed we find that $\mathcal{Q}_n^\pm(\varepsilon)$ has the required insensitivity for all $\varepsilon$.

    To show edge insensitivity RaCInG we first denote by $\vec{E}_{n}^{ts}$ the number of arcs from a vertex with type $t$ to one with type $s$, and by $\tilde{E}_{n}^{ts}$ the number of times there is an arc in both directions between a vertex with type $t$ and $s$. Note that $\mathcal{U}^{-1}(\mathcal{Q}_n^\pm(\varepsilon)) := \{\vec{E}_{n}^{ts} + \vec{E}_{n}^{st} - \tilde{E}_{n}^{ts} \geq (\alpha \pm \varepsilon)n\}$. If we add $\omega(n)$ edges to this model, then note that the count on the left hand side of $\mathcal{U}^{-1}(\mathcal{Q}_n^\pm(\varepsilon))$ increases by at most $\omega(n)$ too (disregarding the instances where bidirectional arcs are created and the count does not increase). Therefore, if $\omega(n)$ is chosen such that $\omega(n)/n \to 0$, we would obtain the required insensitivity for increase.
\end{proof}
\begin{remark}
    Note that the strategy applied in the proof can be applied to different features too (like wedges). In Step II, the idea would always be to (stochastically) upper- or lower-bound the feature of interest by a worst-case scenario. For edges, the above proof for example works because the number of edges/arcs will only increase/decrease by $\omega(n)$ if we add/remove $\omega(n)$ edges/arcs. Insensitivity follows because $\omega(n) \ll n$. For other features, like wedges, this worst-case increase/decrease will be more complicated, but can often still be analysed. For example, in the case of wedges, adding one edge will increase the number of wedges at most proporiontally to the maximal degree in the graph. If one can then show that this maximal degree remains much smaller than $n$, we can prove similar results to Proposition~\ref{prop:direct racing} for wedges.
\end{remark}

To prove Proposition~\ref{prop:direct spacing} we will use a very similar strategy to the previous proof.
\begin{proof}[\textbf{Proof of Proposition~\ref{prop:direct spacing}}]
    Define $\gamma_{ts}^{xy} := \kappa(t, x, s, y) q_t^x q_s^y / |\mathcal{X}|^2$. Next, define the parametrized set of monotone events $\mathcal{Q}_n(\varepsilon) := \{A_{ts}^{xy} > (\gamma_{ts}^{xy} + \varepsilon)n\}$ for fixed $\varepsilon$. We will start with the following steps for fixed $\varepsilon$.
    \begin{enumerate}[label = \textbf{\Roman*.}]
        \item We show \eqref{eq:convergence requirement SIRD} for $\mathcal{Q}_n(\varepsilon)$.
        \item We show that $\mathcal{Q}_n(\varepsilon)$ is completely insensitive in $\texttt{RaCInG}_{n, \mu}(\hat{T}, \hat{C}, \hat{I}, \hat{J})$ at rate $c \log(n) \sqrt{n}$ for any $c$.
    \end{enumerate}
    After these two steps we can conclude convergence of $A_{ts}^{xy} / n \to \gamma_{ts}^{xy}$ in probability for $\texttt{SpaCInG}_{n, \mu}$.

     \paragraph{Step I.} We investigate convergence $\mathcal{Q}_n$ in $\IRD_{n^-}((X, T), \kappa_{n^-})$. We will not treat the computation for $\IRD_{n^+}((X,T), \kappa_{n^+})$, since its proof is analogous. Our strategy will be to fix a kernel $\kappa_n'$ that adheres to \eqref{eq:convergence requirement SIRD}, and we will show that under this kernel $A_{ts}^{xy}/n \to \gamma_{ts}^{xy}$ in probability for $\IRD_{n^-}((X,T), \kappa_{n^-})$. In particular, this will imply that probabilities involving $\mathcal{Q}_n(\varepsilon)$ converge to the same values for all kernels $\kappa_n'$ for each fixed $\varepsilon$. 

    We will now fix a kernel $\kappa_n'$ that adheres to \eqref{eq:convergence requirement SIRD}. This implies that we can write it for fixed $t, s \in \mathcal{S}$ and $x, y \in \mathcal{X}$ as $\kappa(t,x, s,y)(1 + o(1))$. Moreover, if we define $I_{vw}$ as the indicator that arc $(v, w)$ is present, and $J_{vw}^{txsy} := \1\{T_v =t,X_v = x, T_w = s,  X_w = y \}$. Then, we can write \[A_{ts}^{xy}=\sum_{v \in [n^-]} \sum_{\substack{w \in [n^-] \\ w \neq v}} I_{vw} J_{vw}^{txsy} .\]
    We will now apply the second moment method to show convergence. To compute the first moment we first use homogeneity and linearity to show that
    \[
    \expec[A_{ts}^{xy}] = (n^-)^2 \expec[I_{12}J_{12}^{txsy} ].
    \]
    We can then show that the leftover expectation equals
    \[
    \expec[A_{ts}^{xy}] = (n^-)^2 \cdot  \frac{\kappa(t, x, s, y) (1 + o(1)) q_t^x q_s^y}{n^-|\mathcal{X}|^2} = n (1 + o(1)) \frac{\kappa(t, x, s, y)q_t^x q_s^y}{|\mathcal{X}|^2}  .
    \]
    Therefore, indeed $\expec[A_{ts}^{xy}/n] \to \gamma_{ts}^{xy}$.

    Next, we compute the second moment. For that, we first note that 
    \begin{equation}\label{eq:second moment arc count}
    \expec[(A_{ts}^{xy})^2] = \sum_{v_1 \in \mathcal{V}} \sum_{\substack{w_1 \in \mathcal{V} \\ w_1 \neq v_1}} \sum_{v_2 \in \mathcal{V}} \sum_{\substack{w_2 \in \mathcal{V} \\ w_2 \neq v_2}} \expec[I_{v_1 w_1}I_{v_2w_2}J_{v_1w_1}^{txsy} J_{v_2w_2}^{txsy} ].
    \end{equation}
    To compute \eqref{eq:second moment arc count}, we split up the four-fold sum into the case where $v_1, v_2, w_1$ and $w_2$ are distinct, and the case where they are not. There are $n^4 (1 + o(1))^4$ terms in \eqref{eq:second moment arc count} where the vertices are distinct, and in this case both arcs do not depend on each other. Thus, we find that the contribution of this case to \eqref{eq:second moment arc count} equals
    \begin{equation}\label{eq:distict arcs}
        n^2 (1 + o(1))^6 (\gamma_{ts}^{xy})^2.
    \end{equation}

    In the case where vertices overlap in the four-fold sum of \eqref{eq:second moment arc count}, we note that an edge can only overlap if two vertices overlap (i.e., when $v_1 = v_2$ and $w_1 = w_2$). If two vertices overlap, this means that the expected value in \eqref{eq:second moment arc count} will be $\mathcal{O}(1/n)$, since at least one arc has to be generated (and the type space is finite). At the same time, the number of terms in the sum where two vertices overlap equals $n^2(1+o(1))^2$. However, if only one vertex overlaps, then the expected value in \eqref{eq:second moment arc count} will equal $\mathcal{O}(1/n^2)$ while the number of terms in the sum with only one overlap will be $n^3(1+o(1))^3$. All in all, if we divide these findings by $1/n^2$, then we see that these terms will only contribute $\mathcal{O}(n)$ to $\expec[(A_{ts}^{xy})^2]$. Therefore, substituting these findings with \eqref{eq:distict arcs} into \eqref{eq:second moment arc count} shows
    \[
    \expec[(A_{ts}^{xy})^2] = n^2 (1 + o(1))^6 (\gamma_{ts}^{xy})^2 + \mathcal{O}(n).
    \]
    Thus, we indeed find that $\expec[(A_{ts}^{xy})^2/n^2] \to (\gamma_{ts}^{xy})^2$, meaning that indeed $A_{ts}^{xy} / n \to \gamma_{ts}^{xy}$ in probability.

    \paragraph{Step II.} We first note that the result of Step I together with Theorem~\ref{thm:CCI to IRD} applied to $\CCI_{n, \mu}(\hat{T}, \hat{C}, \hat{I}, \hat{J})$ shows us that $A_{ts}^{xy} \to \kappa(t, x, s, y) q_t^x q_s^y / |\mathcal{X}|^2 = \gamma_{ts}^{xy}$ in probability for the RaCInG model in Definition~\ref{def:RaCInG SpaCInG} too. Moreover, If we denote by $(A_{ts}^{xy})^\pm$ the value of $A_{ts}^{xy}$ in $\CCI_{n, \mu}(\hat{T}, \hat{C}, \hat{I}, \hat{J})$ after adding/removing $c \log(n) \sqrt{n}$ arcs, then we note that
    \[
    A_{ts}^{xy} - c\log(n)\sqrt{n} \leq (A_{ts}^{xy})^- \leq A_{ts}^{xy} \leq (A_{ts}^{xy})^+ \leq  A_{ts}^{xy} + c\log(n)\sqrt{n}.
    \]
    Therefore, we can conclude that $(A_{ts}^{xy})^- / n \to \gamma_{ts}^{xy}$ and $(A_{ts}^{xy})^+ / n \to \gamma_{ts}^{xy}$ in probability too. This establishes the required event insensitivity.    
\end{proof}

\paragraph{Acknowledgments.} The research of Mike van Santvoort is funded by the Institute for Complex Molecular Systems (ICMS) at Eindhoven University of Technology.

\bibliographystyle{plain} 
\bibliography{library.bib} 

\end{document}